\documentclass[10pt]{amsart}
\usepackage{amssymb}
\usepackage{stmaryrd}
 \usepackage{graphicx}
 \usepackage{tikz, xcolor}
\usetikzlibrary {shapes}
 \usepackage{array}
 %%%%%%COLOR
\definecolor{dnrbl}{rgb}{0,0,0.5}
\definecolor{dnrgr}{rgb}{0,0.5,0}
\definecolor{dnrre}{rgb}{0.5,0,0}
\usepackage[colorlinks=true, citecolor=dnrgr, linkcolor=dnrre, urlcolor=dnrbl]{hyperref}
\usepackage{pdfsync}
%%%%%THEOREM
\theoremstyle{plain}
\newtheorem{thm}{Theorem}[section]
\newtheorem{prop}[thm]{Proposition}
\newtheorem{lem}[thm]{Lemma}
\newtheorem{coro}[thm]{Corollary}
\newtheorem{defi}[thm]{Definition}
\numberwithin{equation}{section}
\newtheorem{question}{Question}

\newcommand{\la}{\langle}
\newcommand{\ra}{\rangle}

\newcommand{\Nat}{\mathbb{N}}

\newcommand{\restr}{\upharpoonright}  %restriction
\newcommand{\un}{\uparrow} %undefined
\newcommand{\de}{\downarrow} %defined
\newcommand{\ml}{Martin-L\"{o}f }
\newcommand{\pz}{$\Pi^0_1$\ }

\begin{document}

\title[The typical Turing degree]{The typical Turing degree}

\author{George Barmpalias}
\address{{\bf George Barmpalias}, State Key Lab of Computer Science, 
Institute of Software, Chinese Academy of Sciences,
Beijing 100190, 
P.O. Box 8718,
People's Republic of China.}
\email{barmpalias@gmail.com}
\urladdr{\href{http://barmpalias.net}{http://barmpalias.net}}

\author{Adam R.~Day}
\address{{\bf Adam R.~Day}, University of California, Berkeley,
Department of Mathematics,
Berkeley, CA 94720-3840 USA.}
 \email{adam.day@math.berkeley.edu}

\author{Andrew E.M.~Lewis}
\address{{\bf Andrew E.M.~Lewis}, School of Mathematics,
University of Leeds, LS2 9JT Leeds, United Kingdom.}
 \email{andy@aemlewis.com}
 \urladdr{\href{http://aemlewis.co.uk/}{http://aemlewis.co.uk}}
\subjclass[2010]{03D28}
 \keywords{Turing degrees, randomness,
 measure, Baire category, genericity.}
 \date{4-11-2012}
\thanks{Barmpalias
 was supported by a
research fund for international young scientists
No.\ 611501-10168 and
an {\em International Young Scientist Fellowship} 
number 2010-Y2GB03 from the Chinese Academy of 
Sciences. Partial support was also obtained by
the {\em Grand project: Network Algorithms and Digital Information} of the
Institute of Software, Chinese Academy of Sciences.
Lewis was supported by a Royal
Society University Research Fellowship.  Day thanks the Institute for 
Mathematical Sciences of the National 
University of Singapore for support during June-July 2011}
\begin{abstract} The Turing degree of a real measures the computational 
difficulty of producing its binary 
expansion. Since Turing degrees are tailsets, it follows from Kolmogorov's 0-1 
law that for any property 
which may or may not be satisfied by any given Turing degree,  the satisfying 
class will either be of 
Lebesgue measure 0 or 1, so long as it is measurable.  So either the 
\emph{typical} degree satisfies the 
property, or else the typical degree satisfies its negation. Further, there is then 
some level of randomness 
sufficient to ensure typicality in this regard.  A similar analysis can be made in 
terms of Baire category, where 
a standard form of genericity now plays the role that randomness plays in the 
context of measure. 

   We describe and prove a number of results in a programme of research which 
aims to establish the 
properties of the typical Turing degree, where typicality is gauged either in terms 
of Lebesgue measure or 
Baire category.  
\end{abstract}
\maketitle
\setcounter{tocdepth}{1}
\tableofcontents

 \section{Introduction}\label{se:intro}
 The inspiration for the line of research which led to this paper begins essentially 
with Kolmogorov's 0-1 law, 
which states that any (Lebesgue) measurable tailset is either of measure 0 or 1. 
%Here we are working in Cantor space, 
%so we consider infinite binary strings and 
%sets of infinite binary 
%strings. 
%We write  $2^{<\omega}$ to denote the set 
%of finite binary strings, while 
%$2^{\omega}$ denotes the set of 
%infinite 
%binary strings. For $\sigma\in 2^{<\omega}$ 
%and $A\in 2^{\omega}$ we write $\sigma \ast A$ to denote the 
%concatenation 
%of $\sigma$ and $A$,  and we say that 
%$P\subseteq 2^{\omega}$ is a tailset if, 
%for all $\sigma \in 2^{<\omega}$ and all 
%$A\in 2^{\omega}$, 
%$\sigma \ast A\in P$ if and only if $A\in P$. 
 The importance of this law for computability theory then stems from the fact that 
Turing degrees
\footnote{The Turing degrees were introduced by Kleene and Post in 
\cite{ Kleene.Post:54} and are a 
measure of the incomputability of an infinite sequence. For an introduction we 
refer the reader to 
\cite{Odifreddi:89} and \cite{Cooper:04}.} are clearly tailsets---adding on or taking 
away any finite initial 
segment does not change the difficulty of producing a given infinite sequence.  
Upon considering properties 
which may or may not be satisfied by any given Turing degree, we can 
immediately conclude that, so long 
as the satisfying class is measurable\footnote{By the measure of a set of Turing 
degrees is meant the 
measure of its union.}, it must either be of measure 0 or 1. Thus either the 
\emph{typical degree} satisfies the 
property, or else the typical degree satisfies its negation, and this suggests an 
obvious line of research. 
Initially we might concentrate on definable properties, where by a definable set of 
Turing degrees we mean 
a set which is definable as a subset of the structure in the (first order) language 
of partial orders. For each 
such property we can look to establish whether the typical degree satisfies the 
property, or whether it 
satisfies the negation. In fact we can do a little better than this. If a set is of 
measure 1, then there is some 
level of algorithmic randomness\footnote{The basic notions from algorithmic 
randomness will be described 
in Section \ref{techback}. For an introduction we refer the reader to 
\cite{Ottobook} and \cite{rodenisbook}.} 
which suffices to ensure membership of the set. Thus, once we have established 
that the typical degree 
satisfies a certain property, we may also look to establish the level of 
randomness required in order to 
ensure typicality as far as the given property is concerned.  
 
 Lebesgue measure though, is not the only way in which we can gauge typicality. 
One may also think 
in terms of Baire category. For each definable property, we may ask whether or 
not the satisfying class is 
comeager and, just as in the case for measure, it is possible to talk in terms of a 
hierarchy which allows us to 
specify  levels of typicality. The role that was played by randomness in the 
context of measure, is now played 
by a very standard form of genericity. For any given comeager set, we can look to 
establish the level of 
genericity which is required to ensure typicality in this regard. 
 
 \subsection{A heuristic principle}\label{subse:motiv}
% The basic motivation behind this line of 
%research is to establish the properties 
%of the typical Turing degree, 
%where typicality is gauged either in terms of Lebesgue measure or Baire 
%category. As the research has 
%developed, however, further motivations have emerged.  
% A heuristic principle that we have
%isolated from the research in this topic is the following: 
During our research, we have
isolated the following heuristic principle:
{\em if a property holds for all
highly random/generic  degrees
then it is likely to hold for all non-zero degrees that are bounded by
a highly random/generic degree}. 
Here by `highly random/generic' we mean at least 2-random/generic
\footnote{The relevant forms of 
randomness, genericity and the corresponding hierarchies will be defined in 
section \ref{techback}.}. Thus, 
establishing levels of typicality which suffice to ensure satisfaction of a given 
property, also gives a way of 
producing lower cones and sets of degrees which are downward closed (at least 
amongst the non-zero 
degrees),  such that all of the degrees in the set 
satisfy the given property. 
For example,
by a simple analysis of a theorem of Martin 
\cite{Martin:60},
Kautz \cite{Kautz:91} showed that every 2-random degree is 
hyperimmune.\footnote{A degree is hyperimmune if it contains a function 
$f:\omega \rightarrow \omega$ 
which is not dominated by any computable function, i.e.\ such that for any 
computable function $g:\omega 
\rightarrow \omega$ there exist infinitely many $n$ with $f(n)>g(n)$. If a degree is 
not hyperimmune then we 
say it is hyperimmune-free. } In fact, this is just a special case of 
(\ref{eq:kauhgen}).
\begin{equation}\label{eq:kauhgen}
\parbox{11cm}{{\small Every non-zero degree that is bounded by
a 2-random degree is hyperimmune.}}
\end{equation}
  We may deduce (\ref{eq:kauhgen}) from certain facts
  that involve notions from algorithmic randomness. Fixing a universal prefix-free 
machine, we let $\Omega$ 
denote the halting probability.
  A set $A$ is called {\em low for} $\Omega$, if $\Omega$ is 1-random relative
  to $A$.  By \cite[Theorem 8.1.18]{Ottobook} every non-zero low
  for $\Omega$ degree is hyperimmune.
Since every 2-random real is low for $\Omega$
(a consequence of van Lambalgen's theorem, see 
\cite[Theorem 3.4.6]{Ottobook})
we have (\ref{eq:kauhgen}).

In this paper we will give several other examples that
support this heuristic principle. Moreover,  in
Section \ref{subse:allnzdbba} we give an explanation of the fact that it holds for 
the measure theoretic case,
by showing how to translate standard arguments which prove
that a property holds for all highly random degrees, into arguments that prove 
that the same property holds
for all non-zero degrees
that are bounded by a highly random degree.
The heuristic principle often fails for notions of
randomness that are weaker than 2-randomness and we
provide a number of counterexamples throughout this paper.
It is well known that the hyperimmunity 
example above fails for weak 2-randomness. However
Martin's proof in \cite{Martin:60} actually shows that every Demuth
random degree is hyperimmune. 
We shall give examples concerning minimality, the cupping property and the join 
property, which also 
demonstrate the principle for highly generic degrees. 

\subsection{The history of measure and category arguments in the Turing 
degrees}
Measure and Baire category arguments in degree theory are as old as the 
subject itself.
For example, Kleene and Post \cite{Kleene.Post:54} used arguments that 
resemble
the Baire category theorem construction in order to build Turing degrees with 
certain
basic properties. Moreover
de Leeuw, Moore, Shannon and Shapiro
 \cite{deleeuw1955} used a so-called `majority vote argument' 
 in order to show that
if a subset of $\omega$ can be enumerated relative to every set in a class of 
positive measure
then it has an unrelativised computable enumeration.
A highly influential yet unpublished manuscript by Martin 
\cite{Martin:60} showed that more advanced degree-theoretic results are possible 
using these
classical methods. By that time degree theory was evolving into a highly 
sophisticated subject
and the point of this paper was largely that category and measure can be used in
order to obtain advanced results, which go well beyond the basic methods
of \cite{Kleene.Post:54}. Of the two results in
\cite{Martin:60} the first was that the Turing upward closure of a meager set of 
degrees that is downward closed amongst the non-zero degrees, but which does 
not contain $\bf{0}$, is 
meager
(see \cite[Section V.3]{Odifreddi:89} for a concise proof of this).
Given that the minimal degrees form a meager class, an immediate corollary of 
this 
was the fact that there are non-zero degrees that do not bound minimal degrees.
The second result was that the measure of the hyperimmune degrees is 1.
Martin's paper was the main inspiration for much of the work that followed 
in this topic, including \cite{Yates:76}, \cite{paris77} and \cite{Jockusch:80}.

Martin's early work seemed to provide some hope that measure and category 
arguments could provide a 
simple alternative to conventional degree-theoretic constructions which are often 
very complex. This school 
of thought received a serious blow, however, with
\cite{paris77}. Paris answered positively a question of Martin which asked if the 
analogue
of his category result in \cite{Martin:60} holds for measure: are the degrees that 
do not bound minimal 
degrees of measure 1? Paris' proof was considerably more involved than
the measure construction in \cite{Martin:60} and seemed to require sophisticated 
new ideas.
The proposal of category methods as a simple alternative to `traditional' degree 
theory
had a similar fate.  Yates \cite{Yates:76} started working on a new approach
to degree theory that was based on category arguments and was even writing a 
book on
this topic. Unfortunately the merits of his approach were not appreciated at the
time (largely due to the heavy notation that
he used) and he gave up research on the subject altogether.

Yates' work in \cite{Yates:76} deserves a few more words, however, especially 
since it anticipated 
much of the work in \cite{Jockusch:80}.
Inspired by \cite{Martin:60}, Yates started a systematic
study of degrees in the light of category methods. 
A key feature in this work
was an explicit interest in the level of effectivity possible in the 
various category constructions and the translation of this level of effectivity into 
category concepts
(like `$\mathbf{0}'$-comeager' etc.).
Using his own notation and terminology, he studied the level of
genericity that is sufficient in order to guarantee that a set
belongs to certain degree-theoretic comeager classes, thus
essentially defining various classes of genericity already in 1974. He
analysed Martin's proof that the Turing upper closure  of a meager class which is 
downward closed amongst 
the non-zero degrees but which does not contain $\mathbf{0}$ is meager, for example 
(see \cite[Section 5]{Yates:76}),
and concluded that no 2-generic degree bounds a minimal degree.
Moreover, he conjectured (see \cite[Section 6]{Yates:76}) 
that there is a 1-generic that bounds 
a minimal degree.
These concerns occurred 
later in a more appealing form in  Jockusch \cite{Jockusch:80},
where simpler terminology was used and the hierarchy of $n$-genericity was
explicitly defined and studied.

With Jockusch \cite{Jockusch:80}, the heavy notation of Yates was dropped and 
a clear and systematic 
calibration of effective comeager classes
(mainly the hierarchy of $n$-generic sets)
and their Turing degrees was carried out. A number of interesting results were
presented along with a long list of questions that set a new direction for future 
research.
 The latter was followed up by 
Kumabe 
\cite{Kuma:90, Kuma:91, Kuma:93, apal/Kumabe93, Kuma:00} 
(as well as other authors, e.g.\ \cite{ChoDo90})
who answered a considerable number of these questions.

The developments in the measure approach to degree theory
were similar but considerably slower, at least in the beginning.
Kurtz's thesis \cite{Kurtz:81}
is probably the first systematic study of the Turing degrees
of the members of effectively large classes of reals, in the sense of
measure. Moreover the general methodology and 
the types of questions that Kurtz considers are
entirely analogous to the ones proposed in
\cite{Jockusch:80} for the category approach
(e.g.\ studying the degrees of the $n$-random reals
as opposed to the $n$-generic reals, minimality, 
computable enumerability and so on).
Ku\v{c}era \cite{MR820784} 
focused on the degrees of 1-random reals. 
Kautz \cite{Kautz:91} continued in the direction of 
\cite{Kurtz:81} but it was not until the last ten years
(and in particular with the writing of \cite[Chapter 8]{rodenisbook}) that
the study of the degrees of $n$-random reals became well
known and this topic became a focused research area. 

\section{Technical background, notation and terminology} \label{techback}
We let $2^{\omega}$ denote the set of infinite binary sequences
and denote the standard Lebesgue measure on $2^{\omega}$ by $\mu$. We 
let $2^{<\omega}$
denote the set of finite binary strings.
 We use the variables $c,d,e,i,j,k,
\ell,m,n,p,q,s,t$ to range over $\omega$; $f,g$ to range over functions 
$\omega \rightarrow \omega$; $\alpha,\beta,\sigma, \tau, \eta, \rho$ 
to range over $2^{<\omega}$; $A,B,C,D,X,Y,Z$ to range over $2^{\omega}$; we use 
$J,S,T,U,V,W$ to range over 
subsets of $2^{<\omega}$ and we use $F,G,P$ and $Q$ to range over subsets of 
$2^{\omega}$. We shall 
also use the variable $P$ to range over the various definable degree theoretic 
properties.  In the standard 
way we identify subsets of $\omega$ and their characteristic functions. 

\subsection{Turing functionals, Cantor space, strings and functions} 
For $\sigma\in 2^{<\omega}$ and $A\in 2^{\omega}$ we write
$\sigma\ast A$ to denote the concatenation of $\sigma$ and $A$,
and we say that $P\subseteq 2^{\omega}$ is a tailset if, for all
$\sigma\in 2^{<\omega}$ and all $A\in 2^{\omega}$, $\sigma\ast A\in P$ if
and only if $A\in P$.
A set $V \subseteq 2^{<\omega}$ is said to be {\em downward closed} if, 
whenever $ \tau \in V$, 
all initial segments of $ \tau $ are in this set, and is said to be {\em upward 
closed} if, whenever $ \tau \in
V $, all extensions of $ \tau $ are in this set. We write $\llbracket V\rrbracket$
to denote the set of infinite strings which extend some element of $V$, and we 
write $\mu(V)$ to denote $\mu(\llbracket V\rrbracket )$. 

    We use the variables $\Phi,\Psi, \Theta$ and $\Xi$ to range over the Turing 
functionals, and let $\Psi_i$ be 
the $i$th Turing functional in some fixed effective listing of all Turing functionals. 
Then $\Psi_i^{\sigma}(n)$ 
denotes the output of $\Psi_i$ given oracle input $\sigma$ on argument $n$. We 
make the assumption that 
$\Psi_i^{\sigma}(n)\uparrow$ unless the computation converges in $<|\sigma|$ 
steps and $\Psi_i^{\sigma}(n')\downarrow$ 
for all $n'<n$ (these assumption are also made for any 
{\em given} Turing functional $\Phi$, 
but we do not worry about adhering to these conventions when constructing 
Turing functionals). 
    Letting $\langle i,j \rangle$ be a computable bijection 
    $\omega \times \omega \rightarrow \omega$, we 
write $\omega^{[e]}$ to denote the set of all numbers of the form 
$\langle e,j \rangle$ for some $j\in \omega$. 
    
    To help with readability, we shall generally make some effort to maintain a 
certain structure in our use of 
variables. In situations in which we consider the actions of a functional, we shall 
normally use the variables 
$X$ and $\tau$ for sequences and strings in the domain, 
and the variables $Y$ and $\sigma$ for 
sequences and strings in the image. When 
another functional then acts on the image space, we shall generally use the 
variables $Z$ and $\eta$ for 
sequences and strings in the second image space. 
The variables $X,Y$ and $Z$ will generally 
be used in situations in 
which we are simultaneously dealing with all sets of natural numbers. When a 
specific set is given for a 
construction, or has to be built by a construction, then we will use the variables 
$A,B,C$ and $D$.

\subsection{Randomness and Martin-L\"{o}f tests} If each $V_i$ is a set of finite 
binary strings and the 
sequence $\{ V_i \}_{i\in \omega}$ is uniformly computably enumerable (c.e.), 
i.e.\ the set of all pairs $(i,\tau)
$ such that $\tau \in V_i$ is c.e., then we say that this sequence is a 
Martin-L\"{o}f test if $\mu(V_i)<2^{-i}$ for 
all $i$. Then we say that $X$ is Martin-L\"{o}f random if there doesn't exist any 
Martin-L\"{o}f test such that $X
\in \bigcap_i \llbracket V_i\rrbracket $. 
It is not difficult to show that there exists a {\em universal} Martin-L\"{o}f 
test, i.e.\ a Martin-L\"{o}f test 
$\{ V_i \}_{i\in \omega}$ such that $X$ is Martin-L\"{o}f random if and only if
$X\notin  \bigcap_i \llbracket V_i\rrbracket $. 

These notions easily relativize. 
We say that $\{ V_i \}_{i\in \omega}$ is a Martin-L\"{o}f test relative to $X$ if it 
satisfies the definition of a Martin-L\"{o}f test, except that now the sequence need 
only be uniformly c.e.\ 
relative to $X$. Now $Y$ is Martin-L\"{o}f random relative to $X$ if there does not 
exist any  Martin-L\"{o}f test 
relative to $X$ such that 
$Y\in \bigcap_i \llbracket V_i\rrbracket $. Once again, it 
can be shown that there exists a universal test relative to 
any oracle, and that, in fact, 
this universal test can be uniformly enumerated for all oracles. We let 
$\{U_i \}_{i\in \omega}$ be a uniformly 
c.e.\ sequence of operators such that, for any $X$, $\{ U_i^X \}_{i\in \omega}$  is 
a universal test relative to 
$X$. We assume that, for each $i$ and $\tau$, $U^{\tau}_i$ is finite, and is empty 
unless $|\tau|>i$. We 
assume furthermore, that the function $\tau\mapsto U_i^{\tau}$ is computable.

If a subset of Cantor space $P$ is of measure 1, then it is clear that there is 
some oracle $X$ such that all 
sets which are Martin-L\"{o}f random relative to $X$ belong to $P$. For 
$n\geq 1$ we say that $X$ is
 $n$-random if it is Martin-L\"{o}f random relative to 
 $\boldsymbol{0}^{(n-1)}$ 
(and that a degree is
 $n$-random if it contains an $n$-random set). 
 Martin-L\"{o}f randomness is in many respects the standard notion of algorithmic 
randomness.
Other randomness notions may be obtained by varying the level of computability 
in the above definition.
For example, a set is weakly $2$-random if it is not a member of any $\Pi^0_2$ 
null class.
In order to define Demuth randomness, we need to consider the wtt-reducibility. 
We say $X\leq_{wtt} Y$ if 
there exists $i$ such that 
$\Psi_i^X=Y$ and there exists a computable function 
$f$ such that the use on 
argument $n$ is bounded by $f(n)$. Let $W_i$ be the $i$th c.e.\  set of finite 
binary strings according to 
some fixed effective listing of all such sets. We say that $X$ is Demuth random if 
there is no $f$ which is wtt-
reducible to $\emptyset'$, such that 
$\mu(W_{f(i)})<2^{-i}$ and $X\in \llbracket W_{f(i)}\rrbracket$ 
for infinitely many $i$.    
Demuth randomness and weak 2-randomness are incomparable notions, both 
stronger than 1-randomness 
and weaker than 2-randomness.

\subsection{The $n$-generics} We say that $Y$ is 1-generic relative to $X$ if, for 
every $W\subseteq 2^{<
\omega}$ which is c.e.\ relative to $X$:

\[ (\exists \sigma \subset Y)[  \sigma \in W \ \vee\ (\forall \sigma' \supset \sigma)
(\sigma'\notin W) ] . \] 

\noindent It is clear that if a set $P$ is comeager then there is some oracle $X$ 
such that every set which 1-
generic relative to $X$ belongs to $P$.  For $n\geq 1$, we say that $Y$ is 
$n$-generic if it is 1-generic 
relative to $\boldsymbol{0}^{(n-1)}$,  and that a degree is $n$-generic if it 
contains an $n$-generic set.

\subsection{Jump classes} The generalized jump hierarchy is defined as follows. 
For $n\geq 1$ a Turing 
degree is generalized low$_n$ (GL$_n$), if 
$\boldsymbol{a}^{(n)}=(\boldsymbol{a} \vee \boldsymbol{0}')^{(n-1)}$, 
and we say that $\bf{a}$ is generalized high$_n$ (GH$_n$) if  
$\boldsymbol{a}^{(n)}=(\boldsymbol{a} \vee \boldsymbol{0}')^{(n)}$. 
A degree is generalized low if it is GL$_1$ and is generalized high if it is GH$_1$. 
A degree is low$_n$ if it is GL$_n$ and below $\boldsymbol{0}'$. A degree is 
high$_n$ if it is GH$_n$ and 
below $\boldsymbol{0}'$.  
By low is meant low$_1$ and by high is meant high$_1$.

\section{0-1 laws in category and measure}
In the analysis we have considered so far, we have left a gap which we now 
close. \emph{If} a tailset is 
measurable then it is either of measure 0 or 1, and there is then some level of 
randomness that suffices to 
ensure typicality. If we restrict to considering definable sets of Turing degrees, 
however (and where by 
definable we mean definable  in the first order language of partial orders), this 
begs the question, do all such 
sets have to be measurable? Similarly we may ask, do all such sets have to be 
either meager or comeager? 
In this section we make the following two observations, the second of which was 
made in an email 
correspondence with Richard Shore:

\begin{equation}\label{eq:meaindi}
\parbox{10cm}{Whether or not all definable sets of degrees
are measurable is independent of ZFC.}
\end{equation}

\begin{equation}\label{eq:cateindi}
\parbox{10cm}{Whether or not all definable sets of degrees
are either meager or comeager  is independent of ZFC.}
\end{equation}

 We consider first how to prove \ref{eq:meaindi}, the proof of \ref{eq:cateindi} will 
be similar. 
 On the one hand, all definable subsets of the Turing degrees are clearly 
projective. If we assume Projective 
Determinacy, then they will all be measurable \cite{MS64}. On the other hand, we 
wish to make use of the 
fact, due to Slaman and Woodin \cite{SW86}, that any set of Turing degrees 
above $\bf{0}''$ is definable as 
a subset of the Turing degrees if and only if its union is definable in second order 
arithmetic. Initially there 
might seem a basic obstacle to using this fact. We wish to construct a set which 
is of outer measure 1 and 
whose complement is also of outer measure 1. The degrees above $\bf{0}''$ are 
of measure 0, and so any 
subset will be measurable. It is easy to see, however, that the result of Slaman 
and Woodin extends to any 
set of degrees which is invariant under double jump---meaning that if $\bf{a}$ 
belongs to the set, then all 
$\bf{b}$ with $\bf{b}''=\bf{a}''$ are also members. Now, it is easy enough to 
construct a tailset which is of outer 
measure 1 and whose complement is also of outer measure 1, a result due to 
Rosenthal \cite{JR75}. One 
simply defines the set using a transfinite recursion which diagonalises against the 
open sets of measure 
$<1$. This recursion uses a well-ordering of the reals (which suffices to specify a 
well-ordering of the open 
sets). If we assume V=L then we have a well-ordering of the reals which is 
definable in second order 
arithmetic, and the set constructed will be definable in second order arithmetic. 
Finally we just have to 
modify the construction so as to make the set constructed invariant under double 
jump. This means that 
whenever we enumerate a real into the set or its complement, we also 
enumerate all reals which double 
jump to the same degree. Since we still add only countably many reals into either 
the set or its complement 
at each stage of the transfinite recursion, the argument still goes through as it did 
previously.   
 
 In order to prove \ref{eq:cateindi} we proceed in almost exactly the same way. In 
order to construct a 
definable set of degrees which is neither meager nor comeager, we consider this 
time a transfinite recursion 
which defines a set which does not satisfy the property of Baire (see \cite{AK95}, 
for example, for the 
description of such a construction).

\section{Methodology}\label{sec:method}
In this section we discuss a framework  for constructions which 
calculate the measure of a given degree-theoretic class.
By (\ref{eq:meaindi}) no methodology can be completely general, and as one 
moves to consider more 
complicated properties it is to be expected that more sophisticated techniques 
will be required. The 
methodology we shall present here, however, does seem to be very widely 
applicable. All previously known 
arguments of this type, and all of the new theorems we present here,  fit neatly 
into the framework.  An  
informal presentation of the framework is given
in Section \ref{subse:asrdsdf}.

Given a degree-theoretic property $P$ which holds for 
almost all reals, we consider (oracle-free)  constructions which work for all sets 
simultaneously and 
which specify a $G_{\delta}$ null set such that every real is either
in this set or satisfies $P$. By examining the oracle required to produce arbitrarily 
small open coverings of 
this  $G_{\delta}$ set, we establish a level of randomness
which is sufficient for a real to satisfy $P$. In all known examples it turns out
that 2-randomness suffices and, moreover,
that every non-zero degree
that is bounded by a 2-random also satisfies $P$.
A widely applicable methodology for results of the latter type is given in
Section \ref{subse:allnzdbba}.
In Section \ref{subse:meatoo} we give a number
of rather basic facts about measure in relation to Turing computations
that will be used routinely in most of the proofs in this paper.

Our framework rests on various ideas from 
\cite{Martin:60}, \cite{paris77} and \cite{Kurtz:81},
but introduces new features (like the use of measure density theorems)
which simplify and refine the classic arguments as well as
establishing new results in a uniform fashion.

\subsection{All sufficiently random degrees}\label{subse:asrdsdf}
The strategy for showing that all sufficiently random sets $X$ 
satisfy a certain degree-theoretic property is as follows:
\begin{itemize}
\item[(a)] Translate the property into a countable sequence
of requirements $ \{ R_e\}_{e\in \omega}$ referring to an unspecified set $X$.
\item[(b)] Devise an `atomic' strategy which takes
a number $e$ and a string $\tau$ as inputs
and satisfies $R_e$ for a certain proportion of extensions $X$
of $\tau$, where this proportion depends on $e$ and not on $\tau$.
\item[(c)] Assemble a construction from the atomic strategies in
a {\em standard way}.
\end{itemize}
Since steps (a) and (b) are specific to the degree-theoretic property that is 
studied,
we are left to give the details of the
procedure that produces the construction, given the
requirements and the corresponding atomic strategies.
Step (c) involves a construction that proceeds in stages
and places  `$e$-markers' (for $e\in\omega$) on
various strings in the full binary tree.
Each $e$-marker is associated with a version of the atomic strategy for $R_e$
from step (b), which looks to satisfy $R_e$ on a certain proportion of the 
extensions of the string $\tau$ on 
which it is placed. Once an $e$-marker is placed on $\tau$, we shall say that the 
marker `sits on' $\tau$ until 
such a point as it is removed. So a marker may be `placed on' $\tau$ at a specific 
point of the construction, 
and then at this and all subsequent points of the construction, until such a point 
as it is removed,  the marker 
is said to `sit on' $\tau$.  The basic rules according to which markers are placed 
on strings and removed
from them are as follows:
\begin{itemize}
\item[(i)] At most one marker sits on any string at any given stage. 
\item[(ii)] If $\tau\subset\tau'$ and at some stage an $e$-marker sits on $\tau'$ 
and a $d$-marker sits on 
$\tau$, then $d\leq e$.
\item[(iii)] If a marker is removed from $\tau$ at some stage
then any marker that sits on any extension of $\tau$ is also removed.
\end{itemize}
Note that (ii) and (iii) indicate an injury argument that is taking place
along each path $X$. A marker is called {\em permanent} if it is placed
on some string and is  never subsequently removed.
The basic rules above allow the possibility  that, for some
$e\in\omega$, many (perhaps permanent) $e$-markers are placed along a single 
path.
This corresponds to multiple attempts to satisfy $R_e$ along the path.

The construction will strive to address each requirement $R_e$ along 
the `vast majority' of the paths $X$ of the binary tree.
In particular, it will work with an arbitrary parameter $k\in\omega$
and will produce the required objects (like various reductions
that are mentioned in the requirements) along with a set of
strings $W$ such that $\mu(W)<2^{-k}$.
{\em Every real that does not have a prefix in $W$ will satisfy
all $R_e, e\in\omega$}. Considering all of the constructions
as $k$ ranges over $\omega$, we conclude that $P$ is satisfied by  every real 
except for those in a certain 
null 
$G_{\delta}$ set. Since this set may be seen as a \ml test relative to some oracle,
we can also establish a level of randomness that is sufficient to guarantee 
satisfaction of the  property. This is 
directly related to the 
oracle that is needed for the enumeration of $W$. In all of our
examples an oracle for $\emptyset'$ suffices to enumerate $W$, and thus 
2-randomness is sufficient to ensure 
satisfaction of $P$.
In most of our examples we will be able to show that
any standard weaker notion of randomness (in particular, weak
2-randomness) fails to be sufficient.

The outcome of the construction with respect to a particular real $X$
will be reflected by the permanent markers that are placed on initial segments of 
$X$.
In particular, one of the following outcomes will occur:
\begin{itemize}
\item[(1)] For every  $e\in\omega$ there is a permanent $e$-marker placed on 
some initial segment of $X$. 
\item[(2)] There exists some $e\in\omega$ such that, for each $d\leq e$,  a 
permanent $d$-marker is 
placed on an initial segment of $X$,  and such that infinitely many 
permanent $e$-markers 
are placed on initial segments of $X$. 
\item[(3)] There are only finitely many
permanent markers placed on initial segments of $X$.
\end{itemize}
Note that by rule (ii), if outcome (2) occurs with respect to $X$ then  for $j>e$
there will be no permanent $j$-marker placed on any initial segment of $X$, and 
for each
$d<e$ there will only be finitely many (permanent or non-permanent)
 $d$-markers placed on initial segments of $X$.

The only successful outcome for $X$ is (1).
Failure of the construction with respect to $X$ therefore comes in two forms.
Outcome (3) denotes a {\em finitary} failure.  In this case the construction gives 
up placing markers on initial 
segments of $X$,
due to the request of an individual marker that sits on
an initial segment $\tau$ of $X$.
Such a marker may forbid
the placement of markers on certain extensions of $\tau$
(including a prefix of $X$), while waiting for some $\Sigma^0_1$ event.
\footnote{As an example, this event 
might be the convergence of a computation which, should it be found, would then 
allow the marker to effect 
a successful diagonalisation above all those strings where it has previously 
paused the construction (in 
effect) by forbidding the placement of markers.}   
At any
stage during the construction,
requests to forbid the placement of markers will only be made for a 
small measure of sets, and 
so we will be able define a set of strings $V$ of small measure, such that every 
real for which outcome (3) 
occurs has an initial segment in $V$. 

Outcome (2) denotes an {\em infinitary} failure, in the sense that
the construction insists on trying to satisfy a certain 
requirement $R_e$ with respect to  $X$ by placing infinitely many $e$-markers
on initial segments of it, but the requirement remains unsatisfied with respect to 
$X$.
The possibility of outcome (2) is a direct consequence of (b), which says that
the atomic strategy only needs to satisfy the requirement
on a fixed (possibly small) proportion of the reals in its neighbourhood
(leaving the requirement unsatisfied on many other reals).
Reals for which outcome (2) occurs, are those which happen to always be in the 
unsatisfied part of the 
neighbourhood that corresponds to each $e$-marker.
The Lebesgue density theorem tells us, however,  that 
the reals for which outcome (2) occurs cannot form a class of positive measure. 
In particular it tells us that,  
for almost all reals in this class, the limit density must be 1. The existence of an 
element of the class for 
which the limit density is 1 contradicts the fact that (b) insists  the requirement be 
satisfied for a fixed 
proportion of strings extending that on which the marker is placed.   This class 
therefore has measure 0, and 
we can
consider a set of strings $S$ of arbitrarily small measure 
which contains a prefix of every real
in the class.
Then we can simply let $W$ be the union of $V$ and $S$. 

Such constructions will typically be computable, thus constructing
Turing reductions dynamically. Hence the reals for which outcome (2) occurs 
will typically form a  $\Sigma^0_3$
class and $V$ and $S$ will usually require an oracle for $\emptyset'$ for
their enumeration. This is the reason that 2-randomness is required
in all of the results that involve this type of construction.

\subsection{Measure theoretic tricks concerning Turing reductions}
\label{subse:meatoo}
Given a Turing functional $\Psi$, if we are only interested in computations
that $\Psi$ performs relative to a `sufficiently random' (typically a
2-random) oracle, then we
can expect certain features from $\Psi$.
This section discusses features which are particularly useful 
for the arguments employed in this paper. Section
\ref{sssec:tpori} shows that  we may assume
 all infinite binary sequences in the range of  $\Psi$ are incomputable.
In Section \ref{subse:splitfu} we describe a basic fact concerning the measure of
the splittings which can be expected to exist for such a functional $\Psi$
(a tool that is essential in certain coding arguments, including 
the one in Section \ref{sec:joinprop}).
Finally, in Section \ref{subse:ddmeth}
we give a $\Psi$-analogue of the Lebesgue density theorem
which will be an essential tool for extending results
to nonzero degrees below a 2-random degree. 

\subsubsection{Turing procedures on random input}\label{sssec:tpori}
We start with the following useful fact, which says that each Turing functional 
$\Phi$ can be replaced with one
which restricts the domain to sequences $X$ which $\Phi$-map to sets relative to 
which $X$
is not random.

\begin{lem}[Functionals and relative randomness]\label{le:oprelran}
For each Turing functional $\Phi$ there is
a Turing functional $\Psi$ which satisfies the following for all $X$:
\begin{enumerate}
\item[(a)] If $\Psi^X$ is total then $\Phi^X$ is total,  $\Psi^X=\Phi^X$ and $X$ is 
not $\Psi^X$-random.\footnote{By $Y$-random is 
meant Martin-L\"{o}f random relative to $Y$.}
\item[(b)] If $\Phi^X$ is total and $X$ is not $\Phi^X$-random then 
$\Psi^X$ is total. 
\end{enumerate} 
Moreover, an index for $\Psi$ can be obtained effectively from an index for 
$\Phi$.
\end{lem}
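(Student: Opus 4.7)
The plan is to let $\Psi^X(n)$ wait for a finite-stage witness that $X$ is failing the $n$th level of the universal Martin-L\"of test relative to $\Phi^X$ before reporting the value of $\Phi^X(n)$. Concretely, using the uniformly c.e.\ operators $\{U_i^{\tau}\}$ fixed in Section~\ref{techback}, I declare $\Psi^X(n)\!\downarrow = y$ iff there exist finite strings $\sigma$ and $\tau$ with $\sigma\subset\Phi^X$, $\tau\subset X$, $\tau\in U_n^{\sigma}$, and $\Phi^X(n)\!\downarrow = y$. This is a $\Sigma^0_1$ condition in $X$, so $\Psi$ is a legitimate Turing functional, and an index for it is obtained effectively from an index for $\Phi$ because $\sigma\mapsto U_n^{\sigma}$ is uniformly computable.

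For part (b), suppose $\Phi^X$ is total and $X$ is not $\Phi^X$-random. By the universality of $\{U_i\}$, for every $n$ one has $X\in\llbracket U_n^{\Phi^X}\rrbracket$, so some prefix $\tau$ of $X$ lies in $U_n^{\Phi^X}$. Since $U_n^{\rho}$ is finite for each finite $\rho$ and is enumerated monotonically in $\rho$, the membership $\tau\in U_n^{\Phi^X}$ is already witnessed from some finite prefix $\sigma\subset\Phi^X$. Together with the convergence of $\Phi^X(n)$ this supplies the witness required for $\Psi^X(n)\!\downarrow$, so $\Psi^X$ is total with $\Psi^X = \Phi^X$.

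For part (a), if $\Psi^X$ is total then for every $n$ the defining witness forces $\Phi^X(n)\!\downarrow$ with value $\Psi^X(n)$, hence $\Phi^X$ is total and $\Psi^X = \Phi^X$. Moreover the accompanying $\tau\in U_n^{\sigma}$ with $\sigma\subset\Phi^X$ gives $\tau\in U_n^{\Phi^X} = U_n^{\Psi^X}$ by monotonicity of the enumeration in the oracle, so $X\in\bigcap_n\llbracket U_n^{\Psi^X}\rrbracket$ and $X$ fails to be $\Psi^X$-random.

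I do not anticipate a real obstacle here: once the universal test is used as a $\Sigma^0_1$ permission gate on the output of $\Phi$, the two directions match up by construction. The only point requiring care is that the witness $\tau\in U_n^{\sigma}$ propagates to $U_n^{\Phi^X}$ as $\sigma$ grows, which is immediate from the standing convention that $\sigma\mapsto U_n^{\sigma}$ is a monotone computable enumeration.
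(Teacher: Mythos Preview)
Your argument is correct. The gating mechanism you propose---defining $\Psi^X(n)$ to be $\Phi^X(n)$ only once a prefix of $X$ is seen inside $U_n^{\sigma}$ for some $\sigma\subset\Phi^X$---does exactly what is needed, and the verification of (a) and (b) goes through as you describe. The one convention you lean on, that $\sigma\mapsto U_n^{\sigma}$ is monotone in $\sigma$, is implicit in the paper's setup (the $U_i$ are uniformly c.e.\ operators, so $U_n^X=\bigcup_{\sigma\subset X}U_n^{\sigma}$), so there is no gap there.

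The paper's construction is organised differently. Rather than tying the $n$th output bit to level $n$ of the universal test, it enumerates an axiom $\langle\tau,\sigma\rangle$ for $\Psi$ (with $\sigma=\rho\ast j$) only when, letting $i$ be the \emph{least} level for which $\tau$ is not yet covered by $U_i^{\rho}$, the new bit pushes $\tau$ into $U_i^{\sigma}$. In effect the paper tracks a ``current test level'' that advances each time $X$ is caught, so that each revealed bit of $\Phi^X$ is paired with the next uncaught level rather than with the level of the same index. Your direct one-to-one correspondence between output positions and test levels is simpler and avoids the bookkeeping of the least-uncaught-level counter; the paper's version has the mild advantage that $\Psi$ is automatically built as a prefix-monotone functional (each axiom extends the previous output by one bit), which matches the axiom-enumeration style used in the subsequent Lemmas \ref{coro:replfunat} and \ref{coro:replfungenat}. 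For the statement at hand, either route is perfectly adequate.
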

\begin{proof}
We describe how to enumerate axioms for $\Psi$, given the functional $\Phi$.
Let $ \{ U_i \}_{i\in \omega}$ be a universal oracle test as described in Section 
\ref{techback}. 
At stage $s$, for each pair of strings $\tau$, $\sigma=\rho \ast j$ 
of length $<s$, if $i$ is
the least number such that $\tau$ does not extend any string in $U_i^{\rho}$ 
then do the following.
If $\Phi^{\tau}\supseteq \sigma$ and 
$\tau$ extends a string in $U_{i}^{\sigma}$ then enumerate
the axiom $\langle \tau, \sigma \rangle$ for $\Psi$ 
(thus defining $\Psi^{\tau}\supseteq \sigma$).

Clearly $\Psi$ is obtained effectively from $\Phi$. If $\Psi^X$ is total for some
oracle $X$ and $\Psi^X=Y$, then $\Phi^X$ is also total and equal to $Y$. We 
also claim that in this case
$X\in U_i^Y$ for each $i\in\omega$. Towards a contradiction suppose that $i$ is 
the least number such that
$X\not\in U_i^Y$. If $i>0$ then let $\tau\subset X$ and $\sigma=\rho \ast j$ be 
such that $\tau$ does not 
extend any string in $U_{i-1}^{\rho}$, but does extend a string in 
$U^{\sigma}_{i-1}$, and such that we 
enumerate the axiom $\langle \tau,\sigma \rangle$. Let $s$ be the stage at which 
this  axiom is enumerated.  
If $i=0$ then let $s=0$. Then, subsequent to stage $s$ we do not enumerate any 
new axioms of the form $
\langle \tau',\sigma' \rangle$ such that $\tau' \subset X$. This gives us the 
required contradiction and 
concludes the verification of property (a).
For (b) suppose that $\Phi^X=Y$ and that $X\in U_i^Y$ for all $i\in\omega$. 
Then, since it cannot be the 
case  for any finite string $\sigma$ that $X\in U_i^{\sigma}$ for all $i$ (according 
to the conventions established 
in Section \ref{techback}),  it follows that $\Psi^X$ is total.
\end{proof}
\noindent
In most measure arguments in this paper we will
use Turing functionals which do not map to
computable reals. This will simplify
the constructions. 
\begin{defi}[Special Turing functionals]
A Turing functional $\Psi$ is called special if all infinite strings in the range are 
incomputable.  
\end{defi}
\noindent
The following lemma (when combined with the fact that any non-empty $\Pi^0_1$ 
class containing only 1-
randoms contains a member of every 1-random degree)  will be used throughout 
this paper
in order to justify the use of special functionals in various
arguments which involve given reductions.
\begin{lem}[Obtaining special functionals]\label{coro:replfunat}
Given a Turing functional $\Phi$ 
and a non-empty \pz class $P$
which contains only 1-random sequences
we can effectively obtain 
a special Turing functional $\Psi$ which satisfies the following conditions for  
every 2-random set $X$ in $P
$:
\begin{enumerate}
\item[(i)] If $\Psi^X$ is total then 
$\Phi^X$ is total and  $\Psi^X=\Phi^X$.
\item[(ii)] If $\Phi^X$ is total and incomputable 
then $\Psi^X$ is total. 
\end{enumerate} 
\end{lem}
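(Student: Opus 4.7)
The plan is to combine Lemma~\ref{le:oprelran} with a pruning step that confines the domain of the new functional to $P$. First I would apply Lemma~\ref{le:oprelran} to $\Phi$, obtaining a functional $\Phi_0$ (uniformly in an index for $\Phi$) such that: if $\Phi_0^X$ is total then $\Phi^X$ is total, $\Phi_0^X=\Phi^X$ and $X$ is not $\Phi_0^X$-random; and if $\Phi^X$ is total with $X$ not $\Phi^X$-random, then $\Phi_0^X$ is total. Next, fix a computable decreasing sequence of clopen sets $(P_s)_{s\in\omega}$ with $\bigcap_s P_s=P$ and enumerate axioms for $\Psi$ as follows: place $\langle\tau,\sigma\rangle$ into $\Psi$ at stage $s$ iff this axiom has been enumerated for $\Phi_0$ by stage $s$ and $\llbracket\tau\rrbracket\cap P_s\neq\emptyset$. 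An index for $\Psi$ is then obtained effectively.

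To check $\Psi$ is special, suppose $\Psi^X=Y$ is total. Then prefixes of $X$ keep being used as axiom-domains, so every prefix of $X$ survives the pruning, forcing $X\in P$; in particular, $X$ is $1$-random. Simultaneously $\Phi_0^X=Y$, so by Lemma~\ref{le:oprelran}(a), $X$ is not $Y$-random. If $Y$ were computable, ``$Y$-random'' would coincide with ``$1$-random'', a contradiction; hence $Y$ is incomputable, and $\Psi$ is indeed special. Condition~(i) is immediate from Lemma~\ref{le:oprelran}(a) together with the fact that the pruning restriction only discards axioms whose domain has already fallen out of the approximation to $P$.

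For condition~(ii), fix a $2$-random $X\in P$ with $Y:=\Phi^X$ total and incomputable. Since $X\in P$ remains in every $P_s$, the pruning step discards no axiom whose domain is a prefix of $X$, so it suffices to show $\Phi_0^X$ is total. By Lemma~\ref{le:oprelran}(b) this reduces to verifying that $X$ is not $Y$-random, which is the main obstacle. The argument I have in mind is the standard one: since $Y$ is incomputable, the majority-vote principle of de Leeuw--Moore--Shannon--Shapiro \cite{deleeuw1955} applied to $\Phi$ gives $\mu\{X':\Phi^{X'}\supseteq Y\}=0$, whence $\mu\{X':\Phi^{X'}\supseteq Y\restr n\}\to 0$ as $n\to\infty$. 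From $Y$ one enumerates, for each $k$, the c.e.\ set $V_k^Y$ of strings $\sigma$ with $\Phi^\sigma\supseteq Y\restr n$ for the least $n$ whose associated measure can be verified below $2^{-k}$, obtaining a Martin-L\"of test relative to $Y$ which captures $X$ because $\Phi^X\supseteq Y\restr n$ for every $n$. Hence $X$ is not $Y$-random, as required.
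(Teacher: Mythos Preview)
Your construction of $\Psi$ by first applying Lemma~\ref{le:oprelran} and then pruning to $P$ is essentially what the paper does, and your verification that $\Psi$ is special and that (i) holds is correct.

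The gap is in your argument for (ii). The test you describe is not, as written, c.e.\ in $Y$. The class $\{X':\Phi^{X'}\supseteq Y\restr_n\}$ is $\Sigma^0_1(Y)$, so its measure is merely left-c.e.\ in $Y$; deciding whether this measure is below $2^{-k}$ is a $\Pi^0_1(Y)$ question, not something $Y$ can ``verify'' in the sense needed to enumerate $V_k^Y$. You cannot simply search for ``the least $n$ whose associated measure can be verified below $2^{-k}$'', because no stage of the approximation ever certifies that the measure will stay small.

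More to the point, this line of argument cannot be repaired by a cleverer direct test, because if it worked it would prove too much: your construction never uses the $2$-randomness of $X$, so it would show that \emph{every} $X$ with $\Phi^X=Y$ total and incomputable fails to be $Y$-random. That is false. Any incomputable basis for $1$-randomness $Y$ (for instance an incomputable $K$-trivial) is by definition computed by some $X$ that \emph{is} $Y$-random. This is exactly why the paper takes a different route for (ii): if $X$ were $Y$-random then $Y$ would be a basis for $1$-randomness, hence $\Delta^0_2$ by \cite{MR2352724}; but a $2$-random $X$ computes no incomputable $\Delta^0_2$ set. The $2$-randomness hypothesis is used in an essential way here, and it enters through the basis-for-$1$-randomness machinery rather than through a direct test.
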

\begin{proof}
Let $V$ be a c.e.\ set of finite strings such that a real is in
$P$ if and only if it does not have a prefix in $V$.
Given $V$ and $\Phi$ we produce $\Psi$ as in
the proof of Lemma \ref{le:oprelran} with the additional clause that
whenever a string $\tau$ appears in $V$ at some
stage of the construction, we stop
enumerating axioms for $\Psi$ of the form $\langle \tau',\sigma'\rangle$ such that 
$\tau'$ extends $\tau$. 

Let $X$ be a 2-random member of $P$.
Clearly $\Psi^X$  satisfies (a) and (b) of
Lemma \ref{le:oprelran}.
This shows (i) above.
For (ii), we need a notion from \cite{MR1238109}:
a set is called a {\em basis for 1-randomness} 
if there is a set that computes it and is 1-random
relative to it. 
By \cite{MR2352724},
bases for 1-randomness are $\Delta^0_2$.
On the other hand no 2-random set computes
an incomputable $\Delta^0_2$ set.
Hence 2-random sets do not bound incomputable
bases for 1-randomness and (ii) follows from (b)
of Lemma \ref{le:oprelran}.

Finally we show that $\Psi$ is special.
If $\Psi^X$ is total then $X$ must be a member
of $P$. Therefore it is 1-random. By (a)
of Lemma \ref{le:oprelran}, totality of $\Psi^X$ means that $X$ is not 
$\Psi^X$-random. This shows that $\Psi^X$ is incomputable. 
\end{proof}
\noindent
The use of special functionals
in what follows is not necessary but it 
often simplifies the proofs considerably. The simplification comes from the fact 
that the use of special 
functionals will often 
reduce the number of outcomes that a strategy has.
The following fact is applicable in arguments
where we show that some property holds for all non-zero degrees
below a sufficiently random degree.
\begin{lem}[Special functionals for downward density]\label{coro:replfungenat}
Given Turing functionals $\Theta, \Phi$ 
and a non-empty \pz class $P$
which contains only 1-random reals
we can effectively produce a special Turing functional $\Psi$ 
 which satisfies the following conditions for  every 2-random set $X$ in $P$:
\begin{enumerate}
\item[(a)] If $\Psi^Y$ is total for any $Y$, then it is equal to $\Phi^Y$.
\item[(b)] If $\Theta^{X}=Y$ and $\Phi^Y$ is total and incomputable
then $\Psi^Y$ is total.
\end{enumerate} 
\end{lem}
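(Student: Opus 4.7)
The plan is to adapt the construction of Lemma~\ref{coro:replfunat}, with the functional $\Theta$ interposed so that the atomic witness $\tau$ lies in the domain of $\Theta$ while axioms of $\Psi$ are placed on strings $\eta$ in its range. Let $V$ be a c.e.\ set with $P=2^{\omega}\setminus\llbracket V\rrbracket$, enlarged (still c.e., still with $\llbracket V\rrbracket$ unchanged) so as to also contain every dead branch $\tau$ with $\llbracket\tau\rrbracket\subseteq\llbracket V\rrbracket$; then any $\tau$ with no prefix in $V$ extends to some element of $P$. At stage $s$, for each triple $(\tau,\eta,\sigma)$ of strings of length less than $s$ with $\sigma=\rho\ast j$, enumerate the axiom $\langle\eta,\sigma\rangle$ for $\Psi$ provided: $\tau$ has no prefix in $V_s$; $\Theta^{\tau}\supseteq\eta$; $\Phi^{\eta}\supseteq\sigma$; and, letting $i$ be the least index with $\tau\notin\llbracket U_i^{\rho}\rrbracket$, one has both $\tau\in\llbracket U_i^{\sigma}\rrbracket$ and $i\geq|\sigma|$. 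The extra clause $i\geq|\sigma|$ strengthens the atomic test and will be needed for specialness. As in Lemma~\ref{coro:replfunat}, once $\tau$ enters $V$ we stop using $\tau$ and its extensions as future witnesses.

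Clause (a) is immediate: every enumerated axiom requires $\Phi^\eta\supseteq\sigma$, so if $\Psi^Y$ is total it must equal $\Phi^Y$. For clause (b), fix a 2-random $X\in P$ with $\Theta^X=Y$ and $\Phi^Y$ total and incomputable. Then $Y$ is incomputable since $\Phi^Y\leq_T Y$, and $X$ cannot be $Y$-random: otherwise $Y$ would be a base for 1-randomness with witness $X\geq_T Y$, hence $\Delta^0_2$ by \cite{MR2352724}, contradicting that no 2-random computes an incomputable $\Delta^0_2$ set. Therefore $X\in\bigcap_i\llbracket U_i^Y\rrbracket$, and the totality argument of Lemma~\ref{le:oprelran}(b) now adapts: for any $n$, choose $\eta\subset Y$ and $\tau\subset X$ long enough that the minimal test index for $\tau$ against $\rho=\Phi^Y\restr(n-1)$ is at least $n$ (which is possible because $X$ meets every $U_j^Y$, so every sufficiently long prefix of $X$ lies in $U_j^{\rho}$ for every small $j$); the axiom then fires and forces $\Psi^Y\restr n$ to converge.

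The main obstacle is verifying that $\Psi$ is special. Suppose, toward a contradiction, that $\Psi^{Y'}=Z$ is total for some $Y'$ with $Z$ computable. Each axiom $\langle\eta_n,Z\restr n\rangle$ along $(Y',Z)$ is witnessed by some $\tau_n$ in the $\Pi^0_1$ tree $T_P$ of strings with no prefix in $V$, where $\tau_n\in\llbracket U_{i_n}^{Z\restr n}\rrbracket$ and $i_n\geq n$ by the enforced clause. Since $Z$ is computable, $\{U_i^Z\}_i$ (passed to its standard nested form) is a computable Martin-L\"of test. Because the enlargement of $V$ prunes the dead branches, $\Pi^0_1$-compactness of $T_P$ lets us extract a single $X\in P$ that extends $\tau_n$ for cofinally many $n$; then $X\in\llbracket U_i^Z\rrbracket$ for cofinally many---hence all---$i$. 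But $X\in P$ is 1-random and $Z$ is computable, so $X$ is 1-random relative to $Z$, contradicting $X\in\bigcap_i\llbracket U_i^Z\rrbracket$. The subtle point is precisely this interaction: the extra $i\geq|\sigma|$ requirement, needed to force $i_n\to\infty$ for the specialness argument, is consistent with the totality requirement in (b) only because 2-randomness of $X$ guarantees that the minimal test indices along $X$ grow without bound, so the strengthening costs nothing in clause (b).
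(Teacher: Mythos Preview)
Your construction parallels the paper's, but you add the clause $i\geq|\sigma|$ and give a different specialness argument. Two genuine problems arise.

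First, the extra clause is incompatible with the paper's standing convention (Section~\ref{techback}) that $U_i^{\rho}=\emptyset$ unless $|\rho|>i$. Since $\rho=\sigma\restr(|\sigma|-1)$ has length $|\sigma|-1$, the least $i$ with $\tau\notin\llbracket U_i^{\rho}\rrbracket$ is always at most $|\sigma|-1$, so your requirement $i\geq|\sigma|$ is never met. As written, $\Psi$ enumerates no axioms at all, and (b) fails outright. (You also slip between $U_j^{Y}$ and $U_j^{\Phi^Y}$ in the totality argument for (b); it is the latter that is needed.)

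Second, and more conceptually, the compactness step in your specialness argument does not go through even if the clause is repaired to, say, $i\geq|\sigma|-1$. From witnesses $\tau_n\in T_P$ with $|\tau_n|\to\infty$ you claim one can extract $X\in P$ extending $\tau_n$ for cofinally many $n$. But K\"onig's lemma on the prefix-closure of $\{\tau_n:n\in\omega\}$ yields only a path through that tree, which need not extend \emph{any} $\tau_n$: take $\tau_n=0^{n-1}1$, whose prefix-tree has the single infinite path $0^{\omega}$, extending none of them. The constraint $\Theta^{\tau_n}\subset Y$ does not force mutual compatibility of the $\tau_n$, since $\Theta$ may send pairwise incomparable strings to comparable images. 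Without a single $X$ extending infinitely many witnesses you cannot place $X$ in $\bigcap_i\llbracket U_i^{Z}\rrbracket$, and the intended contradiction with $X\in P$ being 1-random is not obtained. The paper's proof omits the extra clause and argues specialness directly: if $\Psi^{Y}$ is total then there is some $X\in P$ with $\Theta^{X}=Y$ which fails to be random relative to $\Psi^{Y}$, and since every member of $P$ is 1-random this forces $\Psi^{Y}$ to be incomputable.
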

\begin{proof}
We describe how to enumerate the axioms for $\Psi$.
Let $V$ be an upward
closed computable set of strings which
contains  initial segments of precisely those reals which are not in $P$.
At stage $s$, for each triple $\tau, \sigma$, $\eta=\rho\ast j$ 
such that all strings in the triple are of length $<s$ and such that $\tau \not\in V$, 
if $i$ is
the least number such that $\tau$ does not extend a string in $U_i^{\rho}$ 
then do the following.
If $\Theta^{\tau}=\sigma$, $\Phi^{\sigma}\supseteq  \eta$ and 
$\tau$ extends a string in $U_{i}^{\eta}$ then enumerate
the axiom $\langle \sigma, \eta \rangle$ for $\Psi$. 

Clearly (a) holds. If $\Psi^Y$ is total then there is some $X\in P$ such that
$\Theta^X=Y$, $\Phi^Y=\Psi^Y$ and $X$ is not random relative to $\Psi^Y$.
Hence $\Psi^Y$ is incomputable, and thus $\Psi$ is
special. 
For (b) suppose that $\Theta^X=Y$
for some 2-random $X$ which is in 
$P$ such that $\Phi^Y$ is total and incomputable.
Then $X$ is not random relative to $\Phi^Y$ because 2-random
reals do not compute incomputable bases for 1-randomness. Therefore
the construction will define $\Psi^Y=\Phi^Y$.
\end{proof}

\subsubsection{Measure splittings for Turing functionals}\label{subse:splitfu}
Recall that a {\em $\Psi$-splitting} is
a pair of strings $\tau,\tau'$ 
such that  $\Psi^{\tau}$ and $ \Psi^{\tau'}$
are incompatible.
When we deal with functionals that operate on a random oracle,
a measure theoretic version of this notion is useful.
\begin{equation}\label{eq:sigmamea}
\parbox{11cm}{Given a set of reals $X$ and a string $\tau$,
the $\tau$-measure of $X$ 
is the measure of the reals in $X$ with prefix $\tau$, multiplied
by $2^{|\tau|}$.}
\end{equation}
 Given a Turing functional $\Psi$, a string 
 $\tau$ and a real number $\epsilon$
  we say 
 that a pair $(U,V)$ of finite sets of strings is a 
 {\em $\Psi$-splitting above $\tau$}
 if:
\begin{itemize}
\item the strings in $U\cup V$ all have the same length and extend $\tau$;
\item if $\tau_0\in U$ and $\tau_1 \in V$ then $\tau_0$ 
and $\tau_1$ are $\Psi$-splitting.
\end{itemize}
\noindent
Moreover, we say that $(U,V)$ has
  measure $\epsilon$ if
  $\mu(U)= \mu(V)= \epsilon/2$.
 A rational number is {\em dyadic} if it has a finite binary expansion.
 We define:
\begin{equation}\label{eq:pidefi}
\pi(\Psi,\sigma)=\mu(\{X\ |\ \Psi^X\supseteq\sigma\}).
\end{equation}
If $U$ is a prefix-free set of strings
and $\Psi$ is a functional then we let $\pi(\Psi, U)$ be the sum of all
$\pi(\Psi, \sigma)$ for $\sigma\in U$.

 \begin{prop}\label{prop:psigotz}
If $\Psi$ is a special Turing functional
then for each $c\in\omega$ and each $\sigma$ there exists $\ell\in\omega$ such 
that 
$\pi(\Psi,\sigma')/\pi(\Psi,\sigma)\leq 2^{-c}$ for all  $\sigma'\supset \sigma$ of 
length $\ell$.
\end{prop}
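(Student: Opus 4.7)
The plan is to argue by contrapositive: suppose the conclusion fails for some $c$ and $\sigma$ with $\pi(\Psi,\sigma)>0$, so that for every length $\ell$ there exists $\tau\supseteq\sigma$ of length $\ell$ with $\pi(\Psi,\tau)>2^{-c}\pi(\Psi,\sigma)$. I aim to produce a computable $Y$ in the range of $\Psi$, which contradicts the assumption that $\Psi$ is special.

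Form the tree $T=\{\tau\supseteq\sigma\,:\,\pi(\Psi,\tau)>2^{-c}\pi(\Psi,\sigma)\}$. Since $\pi(\Psi,\cdot)$ is nonincreasing along extensions, $T$ is closed under taking prefixes down to $\sigma$. For each length $n\geq|\sigma|$ the sets $\{X:\Psi^X\supseteq\tau\}$ indexed by $\tau\supseteq\sigma$ with $|\tau|=n$ are pairwise disjoint and contained in $\{X:\Psi^X\supseteq\sigma\}$, so their $\mu$-measures sum to at most $\pi(\Psi,\sigma)$; consequently $T$ has fewer than $2^c$ strings at any given length. By hypothesis $T$ is infinite, so K\"onig's lemma furnishes an infinite path $Y\supseteq\sigma$ through $T$. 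The sets $\{X:\Psi^X\supseteq Y\restr n\}$ are nested decreasing with intersection $\{X:\Psi^X=Y\}$, so continuity of measure from above yields
\[\mu(\{X:\Psi^X=Y\})=\lim_{n}\pi(\Psi,Y\restr n)\geq 2^{-c}\pi(\Psi,\sigma)>0.\]

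The remaining and principal step is to deduce from this that $Y$ is computable; I expect this to be the hardest part, and I plan to handle it via a Sacks-style majority-vote argument. Let $A=\{X:\Psi^X=Y\}$. Since $\mu(A)>0$, the Lebesgue density theorem supplies a string $\tau_0$ with $\mu(\{X\supseteq\tau_0:\Psi^X=Y\})>\tfrac{3}{4}\cdot 2^{-|\tau_0|}$. Every such $X$ satisfies $\Psi^X(n)=Y(n)$, so for each $n$,
\[\mu(\{X\supseteq\tau_0:\Psi^X(n)=Y(n)\})>\tfrac{3}{4}\cdot 2^{-|\tau_0|},\]
while the complementary event $\Psi^X(n)=1-Y(n)$ holds within $\{X:X\supseteq\tau_0\}$ for a set of measure at most $\tfrac{1}{4}\cdot 2^{-|\tau_0|}$. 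Hence $Y(n)$ is the unique bit $b$ with $\mu(\{X\supseteq\tau_0:\Psi^X(n)=b\})>\tfrac{1}{2}\cdot 2^{-|\tau_0|}$. These measures are uniformly lower-semicomputable from the axioms of $\Psi$, so using the fixed parameter $\tau_0$ one enumerates axioms of $\Psi$ until one of the two lower approximations surpasses $\tfrac{1}{2}\cdot 2^{-|\tau_0|}$ and outputs the corresponding bit. This yields a computable procedure for $Y$, contradicting the fact that $\Psi$ is special and completing the proof.
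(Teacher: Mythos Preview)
Your proof is correct and follows essentially the same route as the paper: assume the conclusion fails, apply K\"onig's lemma to the bounded-width tree of extensions $\tau\supseteq\sigma$ with $\pi(\Psi,\tau)>2^{-c}\pi(\Psi,\sigma)$ to obtain an infinite $Y$, and derive a contradiction by showing $Y$ is both computable and in the range of $\Psi$. The paper is terser---it simply asserts ``This implies that $Y$ is computable'' without spelling out the Sacks/de Leeuw--Moore--Shannon--Shapiro majority-vote argument you supply, and it uses a nested-clopen compactness argument rather than continuity of measure to place $Y$ in the range---but the substance is the same.
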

\begin{proof}
For a contradiction, suppose that there exists some $c\in\omega$ such that for 
each
$\ell\in\omega$ we have $\pi(\Psi,\sigma')/\pi(\Psi,\sigma)>2^{-c}$ for some string 
$\sigma'\supset\sigma$ of 
length $\ell$.
Then by K\"{o}nig's lemma there exists an 
infinite binary sequence $Y$ extending 
$\sigma$
such that $\pi(\Psi, Y\restr_n)/\pi(\Psi,\sigma)>2^{-c}$ for all $n\in\omega$. This 
implies that
$Y$ is computable. For each $n$ there exists a clopen set $V_n$ such that 
$\mu(V_n)/\pi(\Psi,\sigma)>2^{-
c-1}$, such that all strings in $V_n$ $\Psi$-map to extensions of $Y\restr_n$ and 
such that $V_{n
+1}\subseteq V_n$. By compactness it follows that $Y$ is in the range of $\Psi$, 
which
contradicts the fact that $\Psi$ is special.
\end{proof}

A basic fact from classical computability theory is that if
 some oracle $X$ computes an incomputable set via a
 Turing reduction $\Psi$ then $\Psi$-splittings are dense
 along $X$. In other words, for every initial segment $\tau$ of $X$
 there exists a $\Psi$-splitting such that all strings in the splitting extend $\tau$.
 The measure theoretic version of this fact is as follows.

\begin{lem}[Measure splittings for functionals] \label{le:measplitbfa} 
Suppose that $\Psi$ is a special Turing functional, 
$\epsilon$ is a dyadic rational and $\tau$ is a string. If 
there does not exist a $\Psi$-splitting 
above $\tau$ of measure $\epsilon$ then
there exists a c.e.\ set $V$ of strings extending $\tau$  
such that
$\mu(V)\leq 2\epsilon$ and
every set extending $\tau$ on which $\Psi$ is total has a prefix in
$V$.
Moreover, given $\tau, \Psi$ and $\epsilon$, 
an oracle for $\emptyset'$ can find 
whether or not there exists such a splitting 
and, if there does not then  an index for $V$.
\end{lem}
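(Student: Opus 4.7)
The plan is to isolate a finite ``boundary'' of output strings such that $\tau$-extensions landing past this boundary carry measure at most $\epsilon/2$ on each side, and then to cover each side by a c.e.\ antichain of oracles. Write $\pi(\eta)=\mu\{X\in\llbracket\tau\rrbracket:\Psi^X\supseteq\eta\}$ and set $C=\{\eta:\pi(\eta)>\epsilon/2\}$. First I would check that $C$ is a chain under the prefix order: if $\eta_0$ and $\eta_1$ were incompatible members of $C$ then, at any common sufficiently large length $\ell$, one can approximate the $\Sigma^0_1$ classes $\{X\in\llbracket\tau\rrbracket:\Psi^X\supseteq\eta_i\}$ from below by disjoint antichains $W_i$ of strings of length $\ell$ with $\mu(W_i)$ a multiple of $2^{-\ell}$ strictly greater than $\epsilon/2$. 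Since $\epsilon$ is dyadic, each $W_i$ can then be trimmed to have measure exactly $\epsilon/2$, producing a $\Psi$-splitting above $\tau$ of measure $\epsilon$ and contradicting the hypothesis.

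Next I would show that $C$ is \emph{finite}. Otherwise, being an infinite chain under prefix, $C$ determines a unique infinite sequence $Y$; monotone convergence then yields $\mu\{X\in\llbracket\tau\rrbracket:\Psi^X=Y\}=\lim_n\pi(Y\restr n)\geq\epsilon/2>0$, so in particular $Y$ lies in the range of $\Psi$. But $Y$ is computable: the condition $\pi(\eta)>\epsilon/2$ is $\Sigma^0_1$ (since $\pi(\eta)$ is the measure of a $\Sigma^0_1$ class), so $C$ is c.e., and enumerating $C$ until an element of length $>n$ appears reveals $Y(n)$. This contradicts the specialness of $\Psi$.

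Knowing that $C$ is a finite chain, define the frontier $K=\{\eta\notin C:\text{every proper prefix of }\eta\text{ lies in }C\}$: this is $\{\eta^{*}\ast 0,\eta^{*}\ast 1\}$ when $C$ has a maximum element $\eta^{*}$, and is the singleton consisting of the empty string when $C=\emptyset$, so in any case $|K|\leq 2$. For each $\eta\in K$ we have $\pi(\eta)\leq\epsilon/2$; let $V_\eta$ be the canonical, uniformly c.e.\ antichain of strings $\tau'\supset\tau$ that are minimal with $\Psi^{\tau'}\supseteq\eta$, so $\mu(V_\eta)=\pi(\eta)\leq\epsilon/2$. Setting $V=\bigcup_{\eta\in K}V_\eta$, any $X\in\llbracket\tau\rrbracket$ on which $\Psi$ is total has $\Psi^X$ infinite, hence extending a unique element of $K$, so $X$ has a prefix in $V$; and since the sets $\llbracket V_\eta\rrbracket$ are pairwise disjoint (incompatibility of the elements of $K$), $\mu(V)\leq\epsilon\leq 2\epsilon$.

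For the effectivity clause, existence of a $\Psi$-splitting above $\tau$ of measure $\epsilon$ is a $\Sigma^0_1$ property (finite witnesses whose defining conditions are computably checkable), hence decided by $\emptyset'$. When no splitting exists, $\emptyset'$ locates $K$ by a top-down search of the output tree using its ability to decide the $\Sigma^0_1$ condition $\eta\in C$; the search terminates because $C$ is finite, and from $K$ one uniformly extracts an index for the c.e.\ set $V$. The main technical obstacle is the chain step of the first paragraph: converting strict measure inequalities into an exact $\Psi$-splitting of measure $\epsilon$ is what both uses the dyadic hypothesis on $\epsilon$ and makes the splitting-existence hypothesis usable as a bound on the $\pi$-values.
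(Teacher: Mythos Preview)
Your argument is correct and essentially self-contained, whereas the paper's proof is a three-line application of Proposition~\ref{prop:psigotz}. The paper simply picks the least $\ell$ with $\pi(\Psi,\sigma)\le\epsilon/2$ for every $\sigma$ of length $\ell$ (such an $\ell$ exists by that proposition, applied with $\sigma$ the empty string), and then argues: if the measure of $\{X\supset\tau:|\Psi^X|\ge\ell\}$ exceeds $2\epsilon$, a greedy assignment of the level-$\ell$ output strings to two sides yields the required $\Psi$-splitting; otherwise $V=\{\tau'\supset\tau:|\Psi^{\tau'}|\ge\ell\}$ already has measure $\le 2\epsilon$. Your chain-and-finiteness analysis of $C=\{\eta:\pi(\eta)>\epsilon/2\}$ is in effect a localised re-proof of Proposition~\ref{prop:psigotz} (your infinite-chain case is exactly the K\"onig's-lemma contradiction used there), and your frontier $K$ plays the role of the paper's level $\ell$, except that you retain only the one or two relevant output strings rather than the whole level. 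This buys you the sharper bound $\mu(V)\le\epsilon$ rather than $2\epsilon$, and makes the role of the dyadic hypothesis on $\epsilon$ explicit; the paper's version trades these for brevity by leaning on the earlier proposition.
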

\begin{proof} 
Let $\ell$ be the least number such that
$\pi(\Psi, \sigma)\leq \epsilon/2$ for all 
strings $\sigma$
of length $\ell$.
If the measure of all  $X\supset \tau$ such that
$|\Psi^{X}|\geq \ell$  is 
 greater than $2\epsilon$ then
there exists a 
$\Psi$-splitting 
above $\tau$  of measure $\epsilon$.
Otherwise we can let $V$ be the c.e.\ set of strings
$\tau'\supset\tau$ such that $|\Psi^{\tau'}|\geq \ell$.
Finally note that the above procedure
only involves $\Sigma^0_1$ questions,
and so can be carried out using an oracle for $\emptyset'$.
\end{proof}
\noindent
The following version of Lemma \ref {le:measplitbfa} 
is applicable in arguments
where we show that some property holds for all non-zero degrees
below a sufficiently random degree.

\begin{lem}[Measure splittings for downward density] \label{le:mediwasplitbfa} 
Suppose that $\Theta, \Psi$ are special Turing functionals, 
$\epsilon$ is a rational number and $\sigma$ is a string. If 
there does not exist a $\Psi$-splitting $(U,V)$
above $\sigma$ such that $\pi(\Theta, U)$ and $ \pi(\Theta, V)$
are at least $\epsilon/2$ then
there exists a c.e.\ set $V$ of strings  
such that
$\mu(V)\leq 2\epsilon$ and
every set which $\Theta$-maps to an extension of $\sigma$
on which $\Psi$ is total has a prefix in
$V$.
Moreover given $\sigma, \Theta, \Psi$ and $\epsilon$, 
an oracle for $\emptyset'$ can find 
whether or not there exists such a splitting 
and, if there does not then an index for $V$.
\end{lem}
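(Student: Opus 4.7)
The plan is to follow the proof of Lemma~\ref{le:measplitbfa} closely, but with the composed functional $\Xi$ given by $\Xi^X = \Psi^{\Theta^X}$ playing the role that $\Psi$ played there. Since the range of $\Xi$ is contained in the range of $\Psi$, and the latter contains only incomputable sequences, $\Xi$ is itself special; so by the argument of Proposition~\ref{prop:psigotz} the prefix-closed tree $\{\eta : \pi(\Xi,\eta) > \epsilon/2\}$ has no infinite branch (any such branch would, by compactness, lie in the range of $\Xi$ with positive-measure preimage, forcing it to be computable and contradicting specialness), and hence by K\"{o}nig's lemma is finite. Fix $\ell$ such that $\pi(\Xi,\eta)\leq \epsilon/2$ for every $\eta$ of length $\ell$.

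Let $T^*$ be the prefix-minimal subset of $\{\tau \supseteq \sigma : |\Psi^\tau|\geq \ell\}$ and let $W$ be the prefix-minimal set of $\xi$ such that $\Theta^\xi$ extends some $\tau\in T^*$. Both are c.e.\ uniformly in the given data, and by the monotonicity conventions on Turing functionals every $X$ with $\Theta^X\supseteq \sigma$ and $\Psi^{\Theta^X}$ total has a prefix in $W$: take $n$ so large that $\Theta^X\restr n\supseteq \sigma$ and $|\Psi^{\Theta^X\restr n}|\geq \ell$. It therefore remains to show $\mu(W) \leq 2\epsilon$ under the hypothesis. Suppose for contradiction that $\mu(W) = \pi(\Theta, T^*) > 2\epsilon$, and decompose $T^* = \bigsqcup_{\eta} T^*_\eta$ according to $\eta = \Psi^\tau\restr \ell$. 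The key estimate is $\pi(\Theta, T^*_\eta) \leq \pi(\Xi,\eta) \leq \epsilon/2$: any $X$ with $\Theta^X\supseteq\tau$ for some $\tau \in T^*_\eta$ satisfies $\Psi^{\Theta^X} \supseteq \Psi^\tau \supseteq \eta$ by monotonicity. Now pass to a finite $T'' \subseteq T^*$ with $\pi(\Theta, T'') > 2\epsilon$, extend each string in $T''$ to a common length $N$ (preserving $\pi(\Theta)$-mass within each $\eta$-class, since the $\Theta$-preimage is unchanged), and greedily assign the $\eta$-classes to groups $G_1, G_2$. Since each class contributes at most $\epsilon/2$ while the total exceeds $2\epsilon$, balancing forces each group to accumulate $\pi(\Theta)$-mass at least $3\epsilon/4 > \epsilon/2$, so $(U_1, U_2) = \bigl(\bigcup_{\eta\in G_1}\widetilde{T''_\eta},\ \bigcup_{\eta\in G_2}\widetilde{T''_\eta}\bigr)$ is a $\Psi$-splitting above $\sigma$ with $\pi(\Theta, U_i)\geq \epsilon/2$, contradicting the hypothesis.

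For the $\emptyset'$ clause, the oracle finds $\ell$ by searching for the least value satisfying the $\Pi^0_1$ condition ``$\pi(\Xi,\eta)\leq \epsilon/2$ for all $\eta$ of length $\ell$'' and then computes an index for $W$; the decision on existence of a splitting is obtained from lower-semicomputability of the $\pi$-measures together with the dichotomy $\mu(W) \leq 2\epsilon$ versus $\mu(W) > 2\epsilon$ established above. The main technical care is in the greedy partition, which uses \emph{both} the total-mass bound $>2\epsilon$ and the per-class bound $\leq \epsilon/2$ to push each group above $\epsilon/2$; the ancillary bookkeeping (prefix-minimality of $T^*$, alignment to a common length $N$ while preserving $\pi(\Theta)$-mass, and verifying that length-$N$ extensions in different $\eta$-classes continue to $\Psi$-split) is routine and follows from monotonicity of Turing functionals together with the fact that the $\eta$-value is inherited by extensions.
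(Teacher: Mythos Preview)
Your approach is essentially identical to the paper's: both choose $\ell$ so that $\pi(\Psi\circ\Theta,\eta)\leq\epsilon/2$ for every $\eta$ of length $\ell$, take the c.e.\ set of $X$ for which $\Theta^X$ extends $\sigma$ and $\Psi$-maps to length $\geq\ell$, and argue that if this set has measure $>2\epsilon$ then a splitting exists (the paper asserts this last implication without proof, while you supply the greedy-partition detail).

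One small slip: your parenthetical ``preserving $\pi(\Theta)$-mass within each $\eta$-class, since the $\Theta$-preimage is unchanged'' is not correct as stated. Replacing a string $\tau\in T''$ by all its length-$N$ extensions shrinks the $\Theta$-preimage from $\{X:\Theta^X\supseteq\tau\}$ to $\{X:\Theta^X\supseteq\tau\text{ and }|\Theta^X|\geq N\}$, and these can differ when $\Theta^X$ is a finite string. The clean fix is to reverse the order of choices: rather than fixing $T''$ and then extending, first pick a single length $N$ so that the equal-length set $R_N=\{\rho\supseteq\sigma:|\rho|=N,\ |\Psi^\rho|\geq\ell\}$ already has $\pi(\Theta,R_N)>2\epsilon$ (such $N$ exists because your $W$ is the increasing union over $N$ of the corresponding clopen $\Theta$-preimages, each witnessed by finite oracle use), and then run your greedy argument directly on the $\eta$-classes of $R_N$. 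With that adjustment your argument goes through, and the paper's brief proof should be read the same way.
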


\begin{proof} 
Let $\ell$ be the least number such that
$\pi(\Psi\circ\Theta, \eta)\leq \epsilon/2$ for all $\eta$ 
of length $\ell$.
If the measure of all reals which $\Theta$-map to
extensions of any $\rho\supset \sigma$ such that $|\Psi^{\rho}|\geq \ell$ 
is $>2\epsilon$ then
there exists a 
$\Psi$-splitting $(U,V)$ 
above $\sigma$  such that $\pi(\Theta, U)$ and 
$\pi(\Theta, V)$ are at least $\epsilon/2$.
Otherwise we can let $V$ be the c.e.\ set of strings
$\tau$ such that $\Theta^{\tau}$ extends $\sigma$
which $\Psi$-maps to a string of length $\geq \ell$.
Finally note that we only ask  $\Sigma^0_1$ questions,
so the above can be done computably in $\emptyset'$.
\end{proof}

\subsubsection{Measure density for Turing reductions}\label{subse:ddmeth}
The observations in this section are mainly to be applied in the
methodology that is described in Section \ref{subse:allnzdbba}.

\begin{lem}[$\Psi$-totality]\label{le:cconran}
Let $\Psi$ be a Turing functional, 
$c\in\omega$ and let $E$ be a set of tuples $(\sigma,\ell)$ such that the strings
occurring in the tuples form a prefix-free set  and for each $(\sigma,\ell)\in E$:
\begin{equation}\label{eq:conamasmea}
\mu(\{X\ |\ \sigma \subseteq \Psi^X\ \wedge\ |\Psi^X|\geq \ell\}) <  
2^{-c}\cdot \pi(\Psi, \sigma).
\end{equation}
Then the class of reals $X$ such that  a prefix of $\Psi^X$ occurs in some tuple 
$(\sigma,\ell) \in E$ and
$|\Psi^X|\geq \ell$,
has measure $<2^{-c}$.
\end{lem}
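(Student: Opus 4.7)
The plan is to exploit the prefix-freeness of the strings $\sigma$ appearing in $E$ twice: once to make the relevant sets of oracles disjoint so that their measure is simply additive, and once to bound a sum of $\pi(\Psi,\sigma)$-values by $1$.

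First I would set, for each $(\sigma,\ell)\in E$,
\[
A_{\sigma,\ell}=\{X\ |\ \sigma\subseteq\Psi^X\ \wedge\ |\Psi^X|\geq\ell\},
\]
so the class in question is $\bigcup_{(\sigma,\ell)\in E}A_{\sigma,\ell}$. Since the set of strings $\sigma$ occurring in tuples of $E$ is prefix-free, the sets $A_{\sigma,\ell}$ are pairwise disjoint: if $X\in A_{\sigma,\ell}\cap A_{\sigma',\ell'}$ with $(\sigma,\ell)\neq(\sigma',\ell')$ then both $\sigma$ and $\sigma'$ are prefixes of $\Psi^X$, so one extends the other, contradicting prefix-freeness (note prefix-freeness also forces $\sigma=\sigma'$ to imply $\ell=\ell'$, since only one tuple per string can occur in $E$).

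Next I would simply add the measures, apply the hypothesis termwise, and factor out $2^{-c}$:
\[
\mu\Bigl(\bigcup_{(\sigma,\ell)\in E}A_{\sigma,\ell}\Bigr)=\sum_{(\sigma,\ell)\in E}\mu(A_{\sigma,\ell})<2^{-c}\sum_{(\sigma,\ell)\in E}\pi(\Psi,\sigma).
\]
Finally, by definition (\ref{eq:pidefi}), $\pi(\Psi,\sigma)=\mu(\{X\ |\ \Psi^X\supseteq\sigma\})$, and prefix-freeness of the $\sigma$'s again implies that these sets are disjoint as $\sigma$ varies; hence their measures sum to at most $1$. This yields the bound $<2^{-c}$ and completes the proof.

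There is essentially no obstacle here: the lemma is a bookkeeping consequence of the prefix-free hypothesis, and the only point requiring a moment's attention is to observe that prefix-freeness of the strings forces both the disjointness of the $A_{\sigma,\ell}$ and the disjointness of the sets whose measures are $\pi(\Psi,\sigma)$.
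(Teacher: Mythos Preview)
Your proof is correct and follows essentially the same approach as the paper's: both arguments observe that the sets $\{X:\Psi^X\supseteq\sigma\}$ are pairwise disjoint by prefix-freeness, that within each such set the proportion of reals with $|\Psi^X|\geq\ell$ is below $2^{-c}$, and that the total measure of these sets is at most $1$. The paper phrases this in terms of proportions inside each $M_\sigma=\{X:\Psi^X\supseteq\sigma\}$ rather than writing out the sum explicitly, but the content is identical.
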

\begin{proof}
For each $(\sigma, \ell)\in E$ consider the set $M_{\sigma}$ of reals $X$ such 
that $\Psi^X\supseteq\sigma$.
The sets $M_{\sigma}$ are pairwise disjoint. Moreover, the proportion of the reals 
$X$ in $M_{\sigma}$
with $|\Psi^X|\geq \ell$ is $<2^{-c}$. 
Therefore the class of reals $X$ such that  a prefix of $\Psi^X$ occurs in some 
tuple $(\sigma,\ell) \in E$ and
$|\Psi^X|\geq \ell$,
has measure $<2^{-c}$.
\end{proof}
\noindent
 Finally we give an analogue of the Lebesgue density theorem
 which refers to a Turing functional $\Theta$ and a set of strings $V$.
It says that if $F$ consists of the
reals $X$ for which $\Theta^X$ is total and does not have 
a prefix in $V$, then
for almost all $X\in F$ the proportion of the reals that
$\Theta$-map to $\Theta^X\restr_n$ which are in $F$ tends to
1 as $n\to\infty$.
 \begin{lem}[$\Theta$-density]\label{le:extled}
 Suppose $\Theta$ is a Turing functional,
$V$ is a set of finite strings and let $F_V$ be the set
of reals $X$ such that $\Theta^X$ is total and does not 
extend any strings in $V$. Then:
 \begin{equation}\label{eq:limi1exsle}
 \lim_n \frac{\mu\{X_1\in F_V\ |\ \Theta^{X_1}\supseteq \Theta^{X_0}\restr_n\}}
{\pi(\Theta^{X_0}\restr_n)}=1\ \ 
\textrm{for almost all $X_0\in F_V$},
 \end{equation}
 where $\pi(\sigma)=\pi(\Theta,\sigma)$ and `almost all' means `all but a set of 
measure zero'.
 \end{lem}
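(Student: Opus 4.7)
The plan is to reduce \eqref{eq:limi1exsle} to the classical Lebesgue density theorem applied to the push-forward of $\mu$ by $\Theta$. Define a finite Borel measure $\nu$ on $2^{\omega}$ by $\nu(E)=\mu(\Theta^{-1}(E))$, so $\nu(\llbracket\sigma\rrbracket)=\pi(\Theta,\sigma)$ for every string $\sigma$. The key observation is that $F_V=\Theta^{-1}(G)$, where $G:=2^{\omega}\setminus \llbracket V\rrbracket$ is closed. Consequently,
\[
\mu\bigl\{X_1\in F_V\ |\ \Theta^{X_1}\supseteq \Theta^{X_0}\restr_n\bigr\}
=\nu\bigl(G\cap \llbracket \Theta^{X_0}\restr_n\rrbracket\bigr),
\]
so the ratio appearing in \eqref{eq:limi1exsle} equals $\nu(G\cap \llbracket Y_0\restr_n\rrbracket)/\nu(\llbracket Y_0\restr_n\rrbracket)$ where $Y_0=\Theta^{X_0}$, and in particular depends only on $Y_0$.

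Next I would apply the Lebesgue density theorem for the measure $\nu$ on $2^{\omega}$ to the Borel set $G$, to obtain that for $\nu$-almost every $Y\in G$,
\[
\lim_n \frac{\nu(G\cap \llbracket Y\restr_n\rrbracket)}{\nu(\llbracket Y\restr_n\rrbracket)}=1.
\]
The cleanest way to see this is via L\'evy's upward martingale convergence theorem: the functions $Y\mapsto \nu(G\cap \llbracket Y\restr_n\rrbracket)/\nu(\llbracket Y\restr_n\rrbracket)$ form a bounded martingale which is a version of $\mathbb{E}_{\nu}[\chi_G\mid \mathcal{F}_n]$, where $\mathcal{F}_n$ is the $\sigma$-algebra generated by length-$n$ cylinders; since $\bigvee_n \mathcal{F}_n$ generates the full Borel $\sigma$-algebra on $2^\omega$, these conditional expectations converge $\nu$-almost surely to $\chi_G$.

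Finally, let $N\subseteq G$ be the $\nu$-null exceptional set on which this limit fails. Then $\mu(\Theta^{-1}(N))=\nu(N)=0$ by definition of $\nu$, and the set of $X_0\in F_V$ for which \eqref{eq:limi1exsle} fails is contained in $\Theta^{-1}(N)$ together with the $\mu$-null set of $X_0\in F_V$ whose image lies outside the support of $\nu$ (where some denominator vanishes). The only real subtlety is that the density theorem is being invoked for a measure on Cantor space other than standard Lebesgue measure, but the martingale proof goes through for any finite Borel measure without any regularity assumption, so this presents no genuine obstacle; the remainder of the argument is essentially a change of variables, exploiting that $F_V$, although defined in the oracle space, is the $\Theta$-preimage of a Borel subset of the image space.
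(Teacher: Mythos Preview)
Your overall strategy---reducing to L\'evy's upward martingale theorem---is sound and considerably slicker than the paper's proof, which builds by hand a Vitali-type nested sequence of open covers $Q_0\supseteq Q_1\supseteq\cdots$ with geometrically decreasing measure. However, there is a genuine error in your setup: the equality $\nu(\llbracket\sigma\rrbracket)=\pi(\Theta,\sigma)$ is false in general. By definition $\pi(\Theta,\sigma)=\mu\{X:\Theta^X\supseteq\sigma\}$ counts all $X$ whose (possibly partial) $\Theta$-image extends $\sigma$, whereas $\nu(\llbracket\sigma\rrbracket)=\mu(\Theta^{-1}(\llbracket\sigma\rrbracket))$ counts only those $X$ for which $\Theta^X$ is \emph{total} and extends $\sigma$. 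Thus $\nu(\llbracket\sigma\rrbracket)\le\pi(\sigma)$, with strict inequality whenever a positive-measure set of oracles gives a partial computation extending $\sigma$. Your image-space density argument therefore yields only $\nu(G\cap\llbracket Y\restr_n\rrbracket)/\nu(\llbracket Y\restr_n\rrbracket)\to 1$, not the required ratio with $\pi(Y\restr_n)$ in the denominator; bridging the gap would need $\nu(\llbracket Y\restr_n\rrbracket)/\pi(Y\restr_n)\to 1$, which is itself the $V=\emptyset$ case of the lemma, so the argument as written is circular.

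The repair is simple: run L\'evy's theorem in the \emph{domain} space rather than the image. Take the filtration $\mathcal{G}_n$ on $(2^\omega,\mu)$ generated by the sets $\{X:\Theta^X\supseteq\sigma\}$ for $|\sigma|\le n$. For any $X_0$ with $|\Theta^{X_0}|\ge n$ the $\mathcal{G}_n$-atom containing $X_0$ is exactly $\{X:\Theta^X\supseteq\Theta^{X_0}\restr_n\}$, of $\mu$-measure $\pi(\Theta^{X_0}\restr_n)$, so $\mathbb{E}_\mu[\chi_{F_V}\mid\mathcal{G}_n](X_0)$ is precisely the ratio in \eqref{eq:limi1exsle}. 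Since
\[
F_V=\bigcap_n\bigcup_{|\sigma|=n}\{X:\Theta^X\supseteq\sigma\}\ \setminus\ \bigcup_{\sigma\in V}\{X:\Theta^X\supseteq\sigma\}
\]
is measurable with respect to $\mathcal{G}_\infty=\sigma(\bigcup_n\mathcal{G}_n)$, L\'evy's upward theorem gives the conclusion directly. With this correction your argument is still shorter and more conceptual than the paper's explicit covering construction, and it makes transparent that the lemma is nothing more than a density/differentiation statement for the pulled-back filtration.
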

 \begin{proof}
Without loss of generality we may assume that $V$ is prefix-free.
For each $\epsilon\in (0,1)$ define: 
\begin{equation}\label{eq:uiinclude}
G_{\epsilon}=\{X_0\in F_V\ |\ \liminf_n \frac{\mu\{X_1\in F_V\ |\ 
\Theta^{X_1}\supseteq \Theta^{X_0}\restr_n\}}{\pi(\Theta^{X_0}\restr_n)}<
1-\epsilon\}.
\end{equation}
It suffices to show that for each $\epsilon\in (0,1)$ there exists a
sequence $Q_0\supseteq Q_1\supseteq\dots$ of open sets such that
$G_{\epsilon}\subseteq Q_i$
and $\mu(Q_{i+1})\leq \mu(Q_i)\cdot (1-\epsilon)$ for all $i\in\omega$.
Indeed, in that case we have $\lim_i\mu (Q_i)=0$ and 
so the reals $X_0$ in $F_V$ that fail (\ref{eq:limi1exsle})
form a null set.
For each $i$ we will define a set of string/number tuples $E_i$ and define: 
\[
Q_i=\{X\ |\ \Theta^X\ \textrm{has a prefix in a tuple of $E_i$}\}.
\]
Let $E$ be the set of tuples $(\sigma,\ell)$ such that $\ell>|\sigma|$, 
$\pi(\sigma)>0$ and
the proportion of the reals $X$ with $\Theta^X \supseteq \sigma$, such that 
either $\Theta^X\restr\ell$ is 
undefined or has a prefix in
$V$,  is $\geq \epsilon$.
We order the strings first by length  and then lexicographically. Also, we order 
$E$ lexicographically, i.e.\ 
$(\sigma,m)<(\sigma',n)$ when either $\sigma<\sigma'$, or $\sigma=\sigma'$ 
and $m<n$.

At step $i=0$ we define a sequence of tuples 
by recursion: let $(\sigma_j, \ell_j)$ 
be the least tuple in $E$ 
such that $\sigma_j$ 
is incompatible with
$\sigma_k$ for  $k<j$. Let $E_0$ be the collection of all these tuples.
At step $i+1$ do the following for each string $\sigma$ which does not have a 
prefix in $V$ and 
such that $|\sigma|=\ell$ and
$\sigma'\subseteq\sigma$ for some $(\sigma',\ell)\in E_i$.
Define a sequence of tuples by recursion, letting $(\sigma_j', \ell_j')$ be the least 
tuple in $E$ such that $\sigma_j'$
extends $\sigma$ and 
is incompatible with
$\sigma_k'$ for  $k<j$. Let $E_{i+1}$
 be the set of all tuples which occur in any  sequence defined at step $i+1$ 
 (i.e.\ take the union of all the 
sequences produced for the various $\sigma$ such that $\sigma$ does not have 
a prefix in $V$, $|\sigma|=\ell$ and
$\sigma'\subseteq\sigma$ for some $(\sigma',\ell)\in E_i$).

It follows by induction on $i$ that the set of all strings which are in any tuple in 
$E_i$ is prefix-free, and that 
$Q_i\supseteq Q_{i+1}$.
By the definition of $Q_0$ and the minimality of the strings that are enumerated 
into $E_0$ 
we have $G_{\epsilon}\subseteq Q_0$.
For the same reason, at each step $i+1$ we have $Q_i-Q_{i+1}\subseteq 
2^{\omega}-G_{\epsilon}$.
Hence $G_{\epsilon}\subseteq Q_i$ for all $i\in\omega$.
It remains to show that
$\mu(Q_{i+1})\leq \mu(Q_i)\cdot (1-\epsilon)$ for all $i\in\omega$.
In order to see this note that, at stage $i+1$, we consider in effect a partition of 
$Q_i$ into sets $Q_{\sigma}$ 
where $\sigma$ occurs in a tuple in $E_i$ and 
$Q_{\sigma}= \{X\ |\ \Theta^X\supseteq \sigma \}$. According 
to the definition of $E$, we only 
enumerate into $Q_{i+1}$ at most $1-\epsilon$ of 
the measure in each 
$Q_{\sigma}$. 
 \end{proof}

\subsection{Example: bounding a 1-generic degree}\label{se:bound1gendeg}
In this section we demonstrate how to apply the methodology that
was discussed in Section \ref{sec:method} by giving a 
 simple proof of 
a result from \cite{Kurtz:81} and \cite{Kautz:91}
that says that every 2-random degree bounds a 1-generic degree.
This result is also discussed in \cite[Section 8.21]{rodenisbook}.
This is the only level of genericity and randomness where the two notions
interact in a non-trivial manner. In fact, it follows from the results 
in this paper that 
{\em every 2-generic degree forms a minimal pair with every 2-random degree}.

\begin{thm}[Kurtz \cite{Kurtz:81} and 
Kautz \cite{Kautz:91}]\label{th:kur2ranb1gen}
Every 2-random degree bounds a 1-generic degree.
\end{thm}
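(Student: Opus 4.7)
Following the methodology of Section \ref{sec:method}, the plan is to construct a computable Turing functional $\Psi$ such that for every 2-random $X$, $\Psi^X$ is total and 1-generic; since every 1-generic set is incomputable and $\Psi^X\leq_T X$, this exhibits a 1-generic degree below the degree of $X$. Fixing an enumeration $\{W_e\}_{e\in\omega}$ of the c.e.\ subsets of $2^{<\omega}$, I take as requirement $R_e$ the assertion that $\Psi^X$ has a prefix $\nu$ with either $\nu\in W_e$ or no extension of $\nu$ in $W_e$; satisfying every $R_e$ is by definition 1-genericity of $\Psi^X$.

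For the atomic strategy at an $e$-marker placed on a string $\tau$ with current image $\Psi^\tau=\sigma$, I extend $\Psi$ trivially above $\tau$ (say, along the all-zero direction) while searching for some $\sigma'\supseteq\sigma$ in $W_e$. If such $\sigma'$ is ever found, I commit at least half of the measure above $\tau$ to extensions $X$ with $\Psi^X\supseteq\sigma'$, thereby satisfying $R_e$ on that half; the remaining half receives a fresh $e$-marker further down. The fixed success proportion $1/2$ is independent of $\tau$, as required by clause (b) of the framework.

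Assembling the markers using rules (i)--(iii) of Section \ref{subse:asrdsdf}, parameterised by $k\in\omega$, yields a $\emptyset'$-c.e.\ set $W$ with $\mu(W)<2^{-k}$. Outcome (1) directly delivers 1-genericity. Outcome (2) would require $X$ to miss the committed half at infinitely many $e$-markers for some fixed $e$, and the Lebesgue density argument of the framework confines such $X$ to a measure-zero class $S$, a prefix of every element of which can be enumerated using a $\emptyset'$-oracle. Outcome (3) corresponds to some $e$-marker on a prefix of $X$ stalled forever searching for $\sigma'$; although that stall makes $R_e$ itself vacuous (no extension of $\sigma$ lies in $W_e$), subsequent markers are blocked along $X$ and higher requirements go unaddressed, so a prefix of every such $X$ must also be enumerated into a $\emptyset'$-c.e.\ set $V$ of small measure---which is possible because the construction only forbids placement on a controllable measure at each stage. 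Setting $W=V\cup S$ then yields the required bound.

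Letting $k\to\infty$, the collection of bad $X$ becomes a $G_{\delta}$ null class captured by a Martin-L\"{o}f test relative to $\emptyset'$; hence every 2-random $X$ avoids it and $\Psi^X$ is 1-generic. The main technical difficulty I anticipate is coordinating three aspects of the atomic strategy simultaneously: respecting the monotonicity of $\Psi$ across the wait-then-commit split (axioms issued during the search must remain consistent once $\sigma'$ is found), keeping the total measure of stalled markers under $2^{-k}$, and verifying the Lebesgue-density inequality for outcome (2) in the presence of the injuries generated by rule (iii).
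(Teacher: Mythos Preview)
Your proposal follows the paper's approach closely---a marker-based construction fitting the framework of Section~\ref{subse:asrdsdf}---but there is one concrete gap: the reserved proportion cannot be a fixed $1/2$ for every $e$. The paper reserves the fraction $2^{-e}$ above an $e$-marker on $\tau$ (namely the cone $[\tau\ast 0^e]$), and this decrease with $e$ is exactly what makes the outcome-(3) bound go through. If $V_e$ is the prefix-free set of strings carrying a permanent non-acting $e$-marker, the bad set for outcome (3) is $V=\bigcup_{e>k+1}\{\tau\ast 0^e:\tau\in V_e\}$, and one gets $\mu(V)\le\sum_{e>k+1}2^{-e}\mu(V_e)\le\sum_{e>k+1}2^{-e}=2^{-k-1}$. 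With a fixed reserved proportion $1/2$ the analogous sum is $\sum_e 2^{-1}\mu(V_e)$, and nothing prevents $\mu(V_e)$ from being close to $1$ for many $e$: for instance, if $W_j$ contains all strings for $k+1<j<e$ but $W_e=\emptyset$, then every marker of index below $e$ acts and the $e$-markers spread over almost the whole tree, none of them ever acting.

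You flag exactly this point as the main difficulty (``keeping the total measure of stalled markers under $2^{-k}$'') but do not resolve it; the clause ``the construction only forbids placement on a controllable measure at each stage'' is precisely the assertion that needs proof, and it fails for proportion $1/2$. There is also a tension in your description of the waiting phase: if, as you write, ``subsequent markers are blocked along $X$'' whenever an $e$-marker is searching, then a single non-acting marker on a short string sends all of $[\tau]$ into outcome (3). The fix for both issues is the same and is what the paper does: while the $e$-marker on $\tau$ searches, block only the $2^{-e}$-slice $[\tau\ast 0^e]$ and allow higher-index markers to proceed on the remaining $1-2^{-e}$ fraction; when the marker acts, commit that slice to $\sigma'$ and place fresh $e$-markers on the rest. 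With this adjustment your verification goes through exactly as in the paper.
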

\begin{proof}
Let $\{ W_e \}_{e\in \omega}$ be an effective enumeration of all
c.e.\ sets of finite binary strings. 
It suffices to define a computable procedure which takes $k\in\omega$
as input and returns the index of a $\emptyset'$-c.e.\ set of strings $W$ with 
$\mu(W)<2^{-k}$ and a 
functional $\Phi$
such that $\Phi^X$ is total and the following condition is met for all  $e\in\omega$ 
and each $X$ which does 
not have a prefix in $W$:
\[
R_e: \ \exists n\ \big[\Phi^X\restr_n\in W_e\ \vee\ \forall \sigma\in W_e,\ \Phi^X
\restr_n \not\subseteq \sigma
\big].
\]

\paragraph{{\bf Construction}} At stage $s+1\in 2\omega^{[e]}+1$, if  $e>k+1$ 
do the following. 
\begin{enumerate}

\item For each $e$-marker that has not acted and sits on a string $\tau$, if 
 $\Phi^{\tau}[s]=\sigma$ 
 and there is a proper extension $\rho$ of $\sigma$ in $W_e[s]$
then enumerate the axiom $\langle \tau\ast 0^e, \rho \rangle$ for $\Phi$,  and
declare that the marker has {\em acted}. 

\item  Let $\ell$ be large. For each string $\tau$ of length $\ell$ check to see 
whether there is 
 some $\tau'\subset \tau$ such that either (a) an $e$-marker sits on $\tau'$ and 
has not acted,  (b) an $e$-
marker sits on $\tau'$ that has acted and $\tau' \ast 0^e \subseteq \tau$, or (c) for 
some $i<e$ an $i$-marker 
sits on $\tau'$ that has not acted and   
$\tau'\ast 0^i\subseteq\tau$.  If none of these conditions hold then 
place an $e$-marker
on $\tau$ and remove any 
$j$-marker that sits on any initial segment of $\tau$ for $j>e$.

\end{enumerate} 

At stage $s+1\in 2\omega$ let
$\ell$ be large and for each $\tau$ of length $\ell$ enumerate the axiom  
$\langle \tau, \Phi^{\tau}[s]\ast 0 \rangle $ for $\Phi$ 
unless there is
some $e\in\omega$ and a string $\tau'$ 
with an $e$-marker sitting on it which has not acted, 
such that $\tau'\ast 0^e \subseteq \tau$.

\ \paragraph{{\bf Verification}} We start by noting that the axioms enumerated for 
$\Phi$ are consistent. Indeed, the only point at which an inconsistency could 
possibly occur is during step 
(1) of an odd stage $s+1$. During this step, when we enumerate 
an axiom $\langle \tau\ast 0^e, \rho\rangle$, 
$\rho$ extends $\Phi^{\tau}[s]$, and we have not enumerated any axioms with 
respect to proper 
extensions of $\tau$ which are compatible with $\tau \ast 0^e$. 

We consider versions of the outcomes (1)--(3), as described in Section 
\ref{sec:method},  which are modified 
to consider only $e>k+1$ in the obvious way. For each $e>k+1$ let $V_e$ be the 
set of strings
on which we place a permanent $e$-marker that never acts. 
When such a marker is placed on $\tau$ 
the construction will cease
placing $e$-markers on extensions of $\tau$, and  $V_e$ is therefore  
 prefix-free.  If we let
$V=\bigcup_{e>k+1} \{\tau\ast 0^e\ |\ \tau \in V_e\}$
then $\mu(V)\leq 2^{-k-1}$ and $V$ is c.e.\ in $\emptyset'$. This deals with the 
reals for which outcome (3) 
occurs.

Let $Q_e$ be the set of $X$ such that we place infinitely many $e$-markers on 
initial segments of $X$,  but 
finitely many $d$-markers
for each $d<e$.
If $X\in Q_e$ then all but finitely many of the $e$-markers placed on initial 
segments of $X$ will
be permanent and will act at some stage.
We claim that the measure of 
$Q_e$ is 0. If it was positive,
then by the Lebesgue density theorem there would be some $X\in Q_e$ 
such that the relative measure of  $Q_e$ above $X\restr_n$ tends to 1 as 
$n\to\infty$.
This contradicts the fact that every time
a permanent $e$-marker placed on $X\restr_n$ acts, 
a fixed proportion (namely $1/2^e$) of the reals extending 
$X\restr_n$ will not receive an $e$-marker again, and so will not be
in $Q_e$.  Since $\cup_e Q_e$ is $\Sigma^0_3$ and has measure $0$, 
we can compute the index of a $\emptyset'$-c.e.\ set of strings
$S$ such that $\mu(S)<2^{-k-1}$ and every real in $\cup_e Q_e$ has a prefix in 
$S$.
If we set $W=V\cup S$ then $\mu(W)< 2^{-k}$ and, for every real
that does not have a prefix in $W$, outcome (1) occurs. 

Now suppose that outcome (1) occurs for $X$.  
This means that, for each $e>k+1$ there is some longest $\tau \subset X$ on 
which a permanent $e$ marker 
is placed. There are two possibilities to consider. The first possibility is that 
$\tau\ast 0^e \subset X$ and the 
permanent marker placed on $\tau$ acts. Then $R_e$ is satisfied with respect to 
$X$, and we $\Phi$-map $
\tau\ast 0^e$ to a proper extension of $\Phi^{\tau}$. The second possibility is that 
the permanent marker on $
\tau$ does not act. Then there are no proper extensions of $\Phi^{\tau}$ in 
$W_e$. At the stage $s+1$ after 
placing the marker on $\tau$ we enumerate an axiom 
$\langle \tau',\Phi^{\tau}[s]\ast 0\rangle$ for some 
$\tau'\subset X$. 
Thus, in either case $R_e$ is satisfied with respect to $X$, and we may also 
conclude that $\Phi^X$ is total. 
\end{proof}
\noindent
Theorem \ref{th:kur2ranb1gen} says that 2-randomness is sufficient
to guarantee bounding a 1-generic.
Throughout this paper we will be concerned in establishing
optimal results, i.e.\ the `weakest' level of randomness or genericity 
that is sufficient to guarantee some property. In this case, it is not 
difficult to deal with weak 2-randomness. 
\begin{prop}
There is a weakly 2-random degree 
which does not bound any 1-generic degrees.
\end{prop}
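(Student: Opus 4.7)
My plan is to exhibit a weakly $2$-random real $X$ of hyperimmune-free Turing degree. Since every $1$-generic has hyperimmune degree (a standard result, proved by using $1$-genericity against an appropriate c.e.\ set of strings to force the $Y$-computable ``next-$1$ position'' function to escape every candidate computable dominator) and hyperimmune-freeness is closed downward under Turing reducibility, no such $X$ can bound a $1$-generic.

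To produce $X$, I would apply the Jockusch--Soare hyperimmune-free basis theorem to a non-empty $\Pi^0_1$ class consisting entirely of Martin-L\"of randoms---for instance the complement of $\llbracket U_1\rrbracket$ for a universal Martin-L\"of test $\{U_n\}$, which has measure at least $1/2$. This yields $X$ that is simultaneously $1$-random and of hyperimmune-free Turing degree.

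The substantive remaining step is to verify that $X$ is weakly $2$-random, which I would argue contrapositively. Suppose $X$ is $1$-random but belongs to some null $\Pi^0_2$ class $\bigcap_n Q_n$, with each $Q_n$ a $\Sigma^0_1$ open set and $\mu(Q_n)\to 0$. Define $f(n)$ to be the least stage at which a prefix of $X$ enters the enumeration of $Q_n$; then $f$ is total and $X$-computable, so by hyperimmune-freeness it is dominated by some total computable $g$. Letting $V_n$ be the clopen approximation to $Q_n$ at stage $g(n)$, the $V_n$ are uniformly computable, $X\in V_n$ for almost every $n$, and $\mu(V_n)\le\mu(Q_n)\to 0$. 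Since each $\mu(V_n)$ is an explicit dyadic rational, the function $h(n)=\min\{m:\mu(V_m)<2^{-n}\}$ is itself computable, so $\{V_{h(n)}\}_n$ is a genuine Martin-L\"of test whose intersection contains (a tail of) $X$, contradicting the $1$-randomness of $X$.

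The main obstacle is this final verification: hyperimmune-freeness of $X$ must be used to convert the a priori non-effective rate of decay $\mu(Q_n)\to 0$ into an honest Martin-L\"of test. Once this clopen-approximation trick is in place, the remaining ingredients---the Jockusch--Soare basis theorem and the classical hyperimmunity of $1$-generics---are off-the-shelf.
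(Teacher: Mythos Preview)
Your approach is exactly the paper's: both rest on (i) hyperimmune-free 1-random implies weakly 2-random, (ii) hyperimmune-freeness is downward closed, and (iii) 1-generics are hyperimmune; the paper simply cites these three facts, while you also supply a proof of (i) and name the hyperimmune-free basis theorem explicitly.

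One small slip in your verification of (i): the function $h(n)=\min\{m:\mu(V_m)<2^{-n}\}$ need not tend to infinity---if, say, $g(0)<f(0)$ and $Q_0[g(0)]=\emptyset$ then $\mu(V_0)=0$, so $h(n)=0$ for every $n\ge 1$ and $X\notin V_{h(n)}$ for any $n$. Use instead $h'(n)=\min\{m\ge n:\mu(V_m)<2^{-n}\}$, which is still computable and forces $h'(n)\to\infty$; then $X\in V_{h'(n)}$ for all sufficiently large $n$, and shifting the test by the finitely many exceptions (we only need \emph{some} Martin-L\"of test to capture $X$, not an effectively given one) yields a test with $X$ in every level, giving the desired contradiction with 1-randomness.
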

\begin{proof}
This is a consequence of the following 
facts: (i) hyperimmune-free 1-random degrees are 
weakly 2-random, (ii) the hyperimmune-free 
degrees are downward closed and (iii) 1-generic degrees
are not hyperimmune-free.
\end{proof}
\noindent We do not know, however, whether every Demuth random bounds a 1-
generic.

\section{All non-zero degrees bounded by 
a sufficiently random degree}\label{subse:allnzdbba}
Many degree-theoretic properties $P$ that hold for all sufficiently random 
degrees
also hold for any non-zero  degree that is bounded by a sufficiently random 
degree.
In this section we show how the type of construction discussed in Section 
\ref{subse:asrdsdf},  which proves 
that a property $P$ holds for all sufficiently
random degrees, 
can be modified to show that
 $P$ holds for all non-zero degrees which are bounded by a sufficiently
random degree. Typically, `sufficient randomness' turns out to be 2-randomness.

\subsection{Methodology}
As in Section \ref{subse:asrdsdf} we break $P$ into a countable list 
$\{R_e\}_{e\in \omega}$
of simpler requirements. Given a special functional $\Theta$ we look to show that 
$P$ is satisfied by all sets 
computed by a 2-random via $\Theta$. We have an atomic strategy 
which takes
a number $e$ and a string $\sigma$ as inputs
and satisfies $R_e$ for a certain proportion of the reals that $\Theta$-map to  
extensions
of $\sigma$, where this proportion depends on
$e$ and not on $\sigma$.
Given  $k\in\omega$
we describe how to assemble a construction (from the atomic strategies)
which produces a set of strings $W$ with $\mu(W)<2^{-k}$ 
and ensures that all requirements  are met for all reals that do not have a prefix 
in $W$. 

So, to clarify, the construction is similar to the one discussed
in Section \ref{subse:asrdsdf}, only this time the $e$-markers
are to be placed on initial segments of the images
$\Theta^X$ rather than the arguments $X$ (whose initial
segments may possibly be members of $W$).
As a result of this modification, an $e$-marker that is placed
on some string $\sigma$ will strive to achieve the satisfaction of $R_e$
for a fixed proportion of the reals that $\Theta$-map to $\sigma$, rather than a 
proportion of the reals 
extending $\sigma$.

The outcomes of the construction refer to
reals $Y$ in the image space for $\Theta$,  and are the same (1), (2), (3)
as listed in  Section \ref{subse:asrdsdf}.
A density argument 
(based on Lemma \ref{le:extled}) 
suffices to show that the reals that $\Theta$-map to reals
$Y$ with infinitary 
outcome (2) form a null $\Sigma^0_3$ class.
A simple measure counting argument will show that the
reals $X$ for which $\Theta^X$ is total  and has outcome (3), are contained in
an open set of measure at most $2^{-k-1}$. This way a
set of strings $W$ of measure $<2^{-k}$ 
can be produced such that for every real $X$
without a prefix in $W$, if $\Theta^X$ is total then
it has outcome $(1)$ and therefore satisfies $P$.

We give some details concerning the standard features of such a construction
and its verification. Let us recall what took place in the proof of Theorem 
\ref{th:kur2ranb1gen}, since this 
serves as useful example. When an $e$-marker was placed on a string $\tau$, 
what we did in effect was to 
reserve a proportion $2^{-e}$ of the total measure above $\tau$. For the strings 
extending $\tau \ast 0^e$ we 
stopped enumerating axioms for $\Phi$, and we waited for a chance to satisfy 
the genericity requirement 
directly for these strings. This proportion $2^{-e}$ then played two vital roles: 
\begin{enumerate}
\item[(a)] We were able to consider the prefix-free set of strings on which 
permanent $e$-markers are placed 
but do not act, and were able to conclude that the measure permanently 
reserved by these markers is at 
most $2^{-e}$. 
\item[(b)] We were able to conclude that, when an $e$-marker placed on $\tau$ 
acts, it permanently satisfies 
the corresponding requirement for a proportion $2^{-e}$ of the total measure 
above $\tau$, so that the 
Lebesgue density theorem can be applied to show that the set of reals for which 
outcome (2) occurs is of 
measure 0.   
\end{enumerate}  
Now we look to achieve something very similar. We want conditions very similar 
to (a) and (b) to hold, but 
now, rather than considering proportions of the measure above the string on 
which a marker is placed, we 
must consider proportions of the measure that $\Theta$-maps there. The first 
important point to note is that 
we do not actually require the proportions involved in (a) and (b) to be the same. 
If we have that some 
modified version of condition (a) applies, where the proportion involved is 
$2^{-e}$, then we shall be happy 
if condition (b) applies for a smaller proportion---so long as this proportion 
depends only on $e$ and not on 
$\sigma$ we shall be able to apply Lemma \ref{le:extled} as desired.

We proceed as follows. Let us write $\pi(\sigma)$ instead of 
$\pi(\Theta, \sigma)$, and 
let $\sigma\mapsto q_{\sigma}$ be 
a computable map from strings to numbers such that:
\begin{equation}\label{eq:asumofps}
\sum_{\sigma} 2^{-q_{\sigma}}< 2^{-k-3},\ \ \ \
 \textrm{where $\sigma$ ranges over all strings.}
\end{equation}
When an $e$-marker is placed on  $\sigma$, it is given a corresponding 
parameter
 $m_{\sigma}$, which is chosen to be {\em large}. It then places 
 {\em submarkers} on all extensions of $\sigma$ of length $m_{\sigma}$.  
  The atomic strategy for the satisfaction of $R_e$ that we assume given,
 will be played individually by these submarkers.
 Each $e$-marker works with an approximation $\pi^{\ast}(\sigma)$ to
 $\pi(\sigma)$ which is initially the current value
  $\pi(\sigma)$ at the stage when the marker is placed, and is
 updated when necessary,  so as to maintain the condition that 
(\ref{eq:aaproxofpi})
 holds at stages $s$ where the value of $\pi^{\ast}(\sigma)$ is used
 by the construction:
 \begin{equation}\label{eq:aaproxofpi}
 \pi(\sigma)[s]<2\pi^{\ast}(\sigma)[s].
 \end{equation}
 Each update causes an injury of the $e$-marker and
 causes it to remove its previous submarkers (and all other markers and 
submarkers placed on proper 
extensions of $\sigma$) and redefine $m_{\sigma}$.
Clearly each marker can only be injured finitely many times in this way.
This injury is the reason that the atomic strategy is implemented
by the submarkers, rather than by the
marker itself.

An $e$-marker that sits on a string $\sigma$ is initially
{\em inactive}. An inactive marker may only be
activated by the construction at a stage $s_0$ 
if it has found a \emph{suitable} set of strings $P_{\sigma}(\sigma')$ above each 
string $\sigma'$ on which it 
has placed a submarker. We then let $P_{\sigma}$ be the union of all the various 
$P_{\sigma}(\sigma')$, as 
$\sigma'$ ranges over the strings on which it has placed submarkers.  Here 
\emph{suitable} means that the 
strings in $P_{\sigma}(\sigma')$ are all those extending $\sigma'$ of some length 
$\ell_{\sigma}>m_{\sigma}
$ and furthermore, for $s=s_0$: 
\begin{equation}\label{eq:aactivcon}
\pi(P_{\sigma})[s] \geq 2^{-k-2}\cdot \pi^{\ast}(\sigma)[s]\ \ 
\parbox{5.7cm}{and\ \ $\forall \rho\in P_{\sigma}(\sigma')[\pi(\rho)[s]<
2^{-q_{\sigma'}}]$.}
\end{equation}

\noindent Once a marker  becomes active 
it remains so until  injured or removed.

\begin{figure}[htbp]
  \begin{center}
\scalebox{0.8}{
\begin{tikzpicture}[level distance=13mm, remember picture, 
note/.style={rectangle callout, fill=#1}]
\fill [color=green!25]	(-3.17,2.43) ellipse (1.2 and .4);
\fill [color=brown!50]	(-3.65,2.43) ellipse (.3 and .13);
\tikzstyle{level 1}=[sibling distance=20mm]
\tikzstyle{level 2}=[sibling distance=3mm]
\node[ball color=black,circle, scale=0.8] {} [grow'=up]
child[solid, very thick] {node[ball color=white, circle, scale=0.6] (a) {}
child[solid, very thick] {node {}}
child[solid, very thick] {node {}}
child[solid, gray!40, very thick] {node {}}
child[solid, gray!40, very thick] {node {}}
child[solid, gray!40, very thick] {node {}}
child[solid, gray!40, very thick] {node {}}
child[solid, gray!40, very thick] {node {}}
}
child[solid, very thick] {node[ball color=white,circle, scale=0.6]{}
child[solid, very thick] {node {}}
child[solid, very thick] {node {}}
child[solid, gray!40, very thick] {node {}}
child[solid, gray!40, very thick] {node {}}
child[solid, gray!40, very thick] {node {}}
child[solid, gray!40, very thick] {node {}}
child[solid, gray!40, very thick] {node {}}
}
child[solid, very thick] {node[ball color=white,circle, scale=0.6]{}
child[solid, very thick] {node {}}
child[solid, very thick] {node {}}
child[solid, gray!40, very thick] {node {}}
child[solid, gray!40, very thick] {node {}}
child[solid, gray!40, very thick] {node {}}
child[solid, gray!40, very thick] {node {}}
child[solid, gray!40, very thick] {node {}}
}
child[solid, very thick] {node[ball color=white,circle, scale=0.6] (b) {}
child[solid, very thick] {node {}}
child[solid, very thick] {node {}}
child[solid, gray!40, very thick] {node {}}
child[solid, gray!40, very thick] {node {}}
child[solid, gray!40, very thick] {node {}}
child[solid, gray!40, very thick] {node {}}
child[solid, gray!40, very thick] {node (c) {}}
};
\node [note=red!50, , scale=0.8, callout absolute pointer= (a)] at 
(-5.1,0.1) {submarker $\sigma'$};
\node [note=red!50, , scale=0.8, callout absolute pointer={(0,0)}] at 
(4.8,0.4) {marker $\sigma$};
\node [note=blue!50, , scale=0.8, callout absolute pointer=(b)] at 
(5.6,1.33) {length $m_{\sigma}$};
\node [note=blue!50, , scale=0.8, callout absolute pointer={(3.8,2.43)}] at 
(5.6,2.4) {length $\ell_{\sigma}$};
\node [note=green!25, , scale=0.8, callout absolute pointer={(-3.9,2.2)}] at 
(-6,0.9) {$P_{\sigma}(\sigma')$};
\node [note=brown!50, , scale=0.8, callout absolute pointer={(-3.8,2.42)}] at 
(-7,1.8) {$F_{\sigma}(\sigma')$};
\end{tikzpicture}
}     
    \caption{A marker and its submarkers}
    \label{fig:piecewindtr}
  \end{center}
\end{figure}

Let us consider first what it means if a permanent marker never becomes active. 
Proposition 
\ref{prop:psigotz} ensures that for all sufficiently large potential values of $
\ell_{\sigma}$ the second 
inequality of (\ref{eq:aactivcon}) will eventually always hold. Since the set of 
strings on which we place 
permanent markers which do not become active will be a prefix-free set, Lemma 
\ref{le:cconran} then tells us 
that we can cover the set of all $X$ such that $\Theta^X$ is total and extends a 
string in this prefix-free set, 
with an open set of measure $< 2^{-k-2}$.  
   
   So now suppose that the marker becomes active at some stage $s_0$. 
The second condition of (\ref{eq:aactivcon})
allows us to consider a subset
$F_{\sigma}(\sigma')$ of each $P_{\sigma}(\sigma')$ such
that for $s=s_0$:
\begin{equation}\label{eq:aapproxcondmark} 
0\leq \pi(F_{\sigma}(\sigma'))[s] 
-2^{-e}\cdot \pi(P_{\sigma}(\sigma'))[s]\ <\   2^{-q_{\sigma'}}.
\end{equation}
 In other words,
the measure mapping to $F_{\sigma}(\sigma')$ is a good approximation to a
$2^{-e}$ slice of the measure mapping to $P_{\sigma}(\sigma')$. This 
immediately gives us, for $s=s_0$: 
\begin{equation}\label{eq:aapddxxmark} 
\pi(F_{\sigma}(\sigma'))[s]  < 2^{-e}\cdot \pi(\sigma')[s]+
 2^{-q_{\sigma'}}. 
\end{equation}

So (\ref{eq:aapddxxmark}) gives us a modified version of condition (a) which 
holds at stage $s_0$, since  
the submarker on $\sigma'$ will try to satisfy its requirement
 directly on the reals that $\Theta$-map to extensions of the strings in
 $F_{\sigma}(\sigma')$ by reserving this measure. In fact, it does just a little bit 
better than this, since the 
requirement only requires any conditions to be satisfied in the case that 
$\Theta^X$ is total. Take the union 
of all the  $F_{\sigma}(\sigma')$ as $\sigma'$ ranges over the strings on which 
submarkers are placed by 
the marker on $\sigma$, and then replace each string in $F_{\sigma}(\sigma')$ 
with the shortest initial 
segment of it which is long enough to be incompatible with all strings in 
$P_{\sigma}(\sigma')-F_{\sigma}
(\sigma')$. Call this set $D_{\sigma}$. If the marker placed on $\sigma$ is 
permanent, then for any $X$ such 
that $\Theta^X$ extends a string in $D_{\sigma}$, we shall not have to place 
further $e$-markers on initial 
segments of $\Theta^X$. It is therefore the strings which $\Theta$-map to 
extensions of strings in this set 
$D_{\sigma}$ with which we have to work to get our modified version of condition 
(b).    By the first inequality 
of (\ref{eq:aactivcon})
and the first inequality of (\ref{eq:aapproxcondmark}), 
we get that for $s=s_0$:
\begin{equation}\label{eq:arifinostchrk} 
2^{-k-2-e}\cdot \pi^{\ast}(\sigma)[s]
\leq \pi(D_{\sigma})[s].
\end{equation}
It follows from \ref{eq:aaproxofpi} in other words, that the measure of the reals 
which $\Theta$-map to 
extensions of strings in $D_{\sigma}$ is more than
a certain fixed proportion of $\pi(\sigma)$. For $s=s_0$ we have our modified 
version of condition (b): 

\begin{equation}\label{eq:condb} 
2^{-k-3-e}\cdot \pi(\sigma)[s]
\leq \pi(D_{\sigma})[s].
\end{equation}

Now what we have to do is to maintain (\ref{eq:aapddxxmark}) and 
(\ref{eq:condb}) at stages $s>s_0$. 
Actually, maintaining (\ref{eq:condb}) does not initially seem very problematic. 
While $\pi(D_{\sigma})[s]$ 
may increase as $s$ increases, (\ref{eq:aaproxofpi}) guarantees that
 $\pi(\sigma)[s]$ will not increase by 
any problematic amount---or rather that if it does, then this will constitute one of 
only finitely many injuries to 
the marker on $\sigma$.  Maintaining (\ref{eq:aapddxxmark}), however, requires 
us to do a little bit of work. It 
may be the case that as $s$ increases, $\pi(F_{\sigma}(\sigma'))$ increases for 
some $\sigma'$ on which a 
submarker has been placed, so that (\ref{eq:aapddxxmark}) no longer holds. In 
this case, we wish to remove 
some strings from $F_{\sigma}(\sigma')$. We can immediately do this if the 
second condition of 
(\ref{eq:aactivcon}) still holds for all $\rho\in F_{\sigma}(\sigma')$. In this case we 
can remove strings from 
$F_{\sigma}(\sigma')$ so that: 

\begin{equation}\label{eq:conda} 
2^{-e}\cdot \pi(\sigma')[s]\leq \pi(F_{\sigma}(\sigma'))[s]  < 2^{-e}\cdot \pi(\sigma')
[s]+ 2^{-q_{\sigma'}}. 
\end{equation}

This action may remove strings from $D_{\sigma}$ but it does not threaten 
satisfaction of (\ref{eq:condb}), 
since we still have that 
$\pi(F_{\sigma}(\sigma'))[s]\geq 2^{-e}\cdot \pi(\sigma')[s] 
\geq 2^{-e}\cdot \pi(\sigma')[s_0]$. 
We still have to deal, however, with the case that  the second 
condition of 
(\ref{eq:aactivcon}) no longer holds for all $\rho\in F_{\sigma}(\sigma')$. In this 
case, we simply choose $\ell$ 
to be large, and replace each string   $\rho\in F_{\sigma}(\sigma')$ with all 
extensions of $\rho$ of length $
\ell$, to form a new $F_{\sigma}(\sigma')$. This does not threaten satisfaction of 
(\ref{eq:condb}) because it 
does not change $D_{\sigma}$. Moreover, Proposition \ref{prop:psigotz} ensures 
that we will only have to 
redefine $F_{\sigma}(\sigma')$ in this way finitely many times.

These considerations allow
for an argument along the lines of Section  \ref{subse:asrdsdf}.
The basic features of the methodology, such as the  measure counting
which deals with  outcome (3) and the density argument which deals with 
outcome (2),
remain essentially the same.
In constructions of this form, the submarkers are primarily 
responsible for ensuring that the requirements are met. It is the submarkers that 
can act. The markers themselves can only change between being inactive and active.

\subsection{Example: downward density for 1-generic degrees}
In this section we prove Theorem \ref{th:2ranb1gen}
which says that every non-zero degree
 that is bounded by a 2-random
degree $\mathbf{a}$ bounds a 1-generic degree. This is a strengthening of
a result from \cite{Kurtz:81} 
(also discussed in \cite[Section 8.21]{rodenisbook}),
which asserted that the 1-generic degrees are downward dense in
almost all degrees (i.e.\ the class of degrees $\mathbf{a}$ 
with the above property
has measure 1). 

\begin{thm}\label{th:2ranb1gen}
Every non-zero degree that is bounded by a 2-random degree bounds a 1-
generic degree.
\end{thm}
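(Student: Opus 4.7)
The plan is to run the methodology of Section \ref{subse:allnzdbba} with the 1-genericity requirements of Theorem \ref{th:kur2ranb1gen} plugged in as the atomic strategy. Given a non-zero degree $\mathbf{a} \leq \mathbf{x}$ with $\mathbf{x}$ 2-random, pick an incomputable $A \in \mathbf{a}$ and a 2-random $X \in \mathbf{x}$ with $A = \Theta^X$; by Lemma \ref{coro:replfungenat} applied to $\Theta$ and a $\Pi^0_1$ class of 1-randoms containing $X$, we may assume $\Theta$ is special and $\Theta^X$ is total. For each $k\in\omega$ I build a $\emptyset'$-c.e.\ set of strings $W_k$ with $\mu(W_k)<2^{-k}$ and a Turing functional $\Phi_k$ such that for every $X'\notin\llbracket W_k\rrbracket$ on which $\Theta^{X'}$ is total, $\Phi_k^{\Theta^{X'}}$ is total and satisfies every requirement $R_e$ from the proof of Theorem \ref{th:kur2ranb1gen}. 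The family $\{\llbracket W_k\rrbracket\}_k$ is a Martin-L\"{o}f test relative to $\emptyset'$, so the 2-random $X$ avoids it for some $k$, giving a non-computable 1-generic $B=\Phi_k^A\leq_T A$, which proves the theorem.

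The construction places $e$-markers on image strings $\sigma$ (prefixes of $Y=\Theta^{X'}$) exactly in accordance with Section \ref{subse:allnzdbba}: a marker first waits for (\ref{eq:aaproxofpi}) to hold with respect to its recorded approximation $\pi^\ast(\sigma)$, picks $m_\sigma$ large, places submarkers on all extensions of $\sigma$ of length $m_\sigma$, and activates once it can select a suitable $P_\sigma$ and the reserved slice $F_\sigma(\sigma')\subseteq P_\sigma(\sigma')$ witnessing (\ref{eq:aactivcon}) and (\ref{eq:conda}). An active submarker on $\sigma'$ then plays the atomic strategy of Theorem \ref{th:kur2ranb1gen}, but confined to the strings $\rho\in F_\sigma(\sigma')$: it commits $\Phi^\rho$ to be extended by $0^e$, and as soon as some $\tau\in W_e$ properly extends the current value of $\Phi^\rho$ it enumerates $\langle \rho\ast 0^e,\tau\rangle$ for $\Phi$, diagonalising directly into $W_e$. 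At passive (even) stages, for every image string not currently blocked by an inactive submarker we enumerate a one-symbol-extension axiom, ensuring totality of $\Phi^Y$ along every image $Y$ for which all (sub)markers stabilise or act.

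Verification is the standard two-step covering argument. A permanent $e$-marker on $\sigma$ that never activates forces the first clause of (\ref{eq:aactivcon}) to fail forever; by Proposition \ref{prop:psigotz} this pushes the $\pi$-mass of such $\sigma$'s into a prefix-free set of arbitrarily small total measure, and Lemma \ref{le:cconran} converts this into a $\emptyset'$-c.e.\ open set $V$ in the source space of measure $<2^{-k-1}$ covering the corresponding $X'$. An active $e$-marker on $\sigma$ permanently reserves the region $D_\sigma$ of $\Theta$-preimage measure at least $2^{-k-3-e}\pi(\sigma)$ (by (\ref{eq:condb})) on which no further $e$-marker is ever placed; Lemma \ref{le:extled} then shows that the $\Sigma^0_3$ class of $X'$ with outcome (2) for $Y=\Theta^{X'}$ is null and hence covered by a $\emptyset'$-c.e.\ open set $S$ with $\mu(S)<2^{-k-1}$. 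Setting $W_k=V\cup S$ gives the required measure bound, and for every $X'\notin \llbracket W_k\rrbracket$ the outcome along $\Theta^{X'}$ is (1), so each $R_e$ is satisfied by $\Phi_k^{\Theta^{X'}}$ exactly as in Theorem \ref{th:kur2ranb1gen}.

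The main obstacle is the dynamic maintenance of the invariants (\ref{eq:aaproxofpi}), (\ref{eq:conda}) and (\ref{eq:condb}) once axioms for $\Phi$ have been issued: as $\pi(\sigma')[s]$ and $\pi(F_\sigma(\sigma'))[s]$ drift upward, one must re-thin $F_\sigma(\sigma')$ (trimming strings while (\ref{eq:aactivcon})'s second clause still holds, and otherwise replacing strings by long extensions supplied by Proposition \ref{prop:psigotz}) without ever contradicting an axiom already enumerated, and absorb the finitely many resulting injuries into the submarker layer rather than the marker layer. Rule (ii) of Section \ref{sec:method} guarantees that higher priority $e$-markers sit closer to the root, so rule (iii)'s cascade of removals never encroaches on a $D_\sigma$ whose $e$-marker has already activated and this keeps the bookkeeping consistent and the measure bounds on $V$ and $S$ intact.
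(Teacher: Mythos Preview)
Your approach is the paper's, but the verification has a gap. You cover outcome~(3) only by the set $V$ of $X'$ whose image carries a permanent marker that never activates. The paper needs a second piece of outcome~(3): a permanent $e$-marker on $\sigma$ that \emph{does} activate, but whose submarker on some $\sigma'$ never acts (because no extension of $\Phi^{\sigma'}$ ever appears in $W_e$), while $Y=\Theta^{X'}$ extends a string in the reserved slice $F_\sigma(\sigma')$. For such $Y$ the construction never resumes extending $\Phi^Y$, so $\Phi^Y$ is partial. The paper covers these $X'$ by a separate $\emptyset'$-c.e.\ set $V_1$: since for each fixed $e$ the relevant $\sigma'$ form a prefix-free set, the maintained bound (\ref{eq:aapddxxmark}) gives $\pi(J_e)<\sum_{\sigma'}2^{-q_{\sigma'}}+2^{-e}$, and summing over $e>k+3$ yields $\mu(V_1)<2^{-k-2}$; the paper then sets $W=V_0\cup V_1\cup S$. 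Your $W_k=V\cup S$ omits this component, so some $X'$ with $\Phi_k^{\Theta^{X'}}$ partial escape the test.

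Two smaller points. The special-functional lemma you want is Lemma~\ref{coro:replfunat} (input: a single functional $\Theta$), not Lemma~\ref{coro:replfungenat} (input: a chain $\Theta,\Phi$). And the $\rho\ast 0^e$ device you import from Theorem~\ref{th:kur2ranb1gen} is redundant here: in the Section~\ref{subse:allnzdbba} framework the slice $F_\sigma(\sigma')$ already \emph{is} the reserved region, and the paper's submarker simply sets $\Phi^\rho=\eta$ for every $\rho\in F_\sigma(\sigma')$ once a suitable $\eta\in W_e$ extending $\Phi^{\sigma'}$ is found, with no further $0^e$ padding in the $\Theta$-image space.
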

\begin{proof}
Let $\{ W_e\}_{e\in \omega}$ be an effective enumeration of all
c.e.\ sets of strings and
suppose that $B$ is 2-random and computes
an incomputable set $A$ via the
Turing reduction $\Theta$. 
By Lemma \ref{coro:replfunat} we may assume that
$\Theta$ is special.
It suffices to define a computable procedure 
which takes as input $k\in\omega$ 
and returns the index of a $\emptyset'$-c.e.\ set 
of strings $W$ with $\mu(W)<2^{-k}$ and a functional $\Phi$
such that, if $\Theta^X=Y$ and
$X$ does not have a prefix in $W$, 
then $\Phi^Y$ is total and for all $e$: 
\[
R_e: \ \exists n\ \big[\Phi^Y\restr_n\in W_e\ \vee\ \forall \eta\in W_e,\ \Phi^Y
\restr_n \not\subseteq \eta\big].
\]
We follow the methodology and notation of
Section \ref{subse:allnzdbba}.

\ \paragraph{{\bf Construction}}
At Stage 0 place a k+4-marker on the
empty string.

At stage $s+1\in2\omega^{[e]}$, if  $e>k+3$  then
for each $e$-marker that sits on a string $\sigma$, proceed according to the first 
case below that applies.

\begin{enumerate}
\item If (\ref{eq:aaproxofpi}) does not hold, let
$\pi^{\ast}(\sigma)=\pi(\sigma)[s]$, declare that
the $e$-marker on $\sigma$ is {\em injured} and
is inactive.  Remove any markers and submarkers that sit on
proper extensions of $\sigma$.  Let $m_{\sigma}$
be large and place a submarker on each extension of $\sigma$ of length 
$m_{\sigma}$.

\item Otherwise, if the marker is inactive and 
(\ref{eq:aactivcon}) holds for some set of strings
$P_{\sigma}(\sigma')$ for each submarker $\sigma'$, 
declare that the marker is {\em active},
and define 
$F_{\sigma}(\sigma')$ for each submarker  $\sigma'$  to be a subset of 
$P_{\sigma}(\sigma')$
such that (\ref{eq:aapproxcondmark}) holds.
Moreover for each submarker $\sigma'$  and for
each extension
$\rho$ of $\sigma'$ in $P_{\sigma}(\sigma')-F_{\sigma}(\sigma')$,
define
$\Phi^{\rho}$ to be $\cup_{\rho'\subset\rho} \Phi^{\rho'}$ concatenated with $ 0$.
\item Otherwise, for each submarker $\sigma'$ which has not acted, 
such that there is an extension $\eta$ of 
  $\Phi^{\sigma'}[s]$ in $W_e[s]$, define
$\Phi^{\rho}$ to be the least such $\eta$ for all $\rho\in F_{\sigma}(\sigma')$.  In 
this case, remove
all markers and submarkers that sit on  proper extensions of $\sigma'$
 and declare that
the submarker has acted. For each submarker $\sigma'$ which has not acted, 
such that there is no extension $\eta$ of 
  $\Phi^{\sigma'}[s]$ in $W_e[s]$ and such that (\ref{eq:aapddxxmark}) no longer 
holds, there are 
  two possibilities to consider. If the second condition of (\ref{eq:aactivcon}) still 
holds for all $\rho\in 
F_{\sigma}(\sigma')$, 
  then remove strings from $F_{\sigma}(\sigma')$ so that (\ref{eq:conda}) holds. 
  If $\rho$ is removed from $F_{\sigma}(\sigma')$ then
  define $\Phi^{\rho}$ to be $\cup_{\rho'\subset\rho} \Phi^{\rho'}$
  concatenated with 0. If the second condition of (\ref{eq:aactivcon})
  does not hold then choose $\ell$ to be large, 
  and replace each string   $\rho\in F_{\sigma}(\sigma')$ 
  with all extensions of $\rho$ of length $\ell$, to form 
a new $F_{\sigma}(\sigma')$.
\end{enumerate}

At stage $s+1\in 2\omega+1$ 
let $\ell$ be large  and
do the following for each string  $\rho$ of length $\ell$, provided
that if $\sigma$ is the longest prefix of it on which a marker is placed,
then this marker is active. Let $\sigma'$ be the string of length $m_{\sigma}$ 
which is an initial segment of  $
\rho$,
and let $e$ be the index of the marker placed on $\sigma$.
If the submarker on $\sigma'$ has not acted
then put an $(e+1)$-marker on
$\rho$, unless $\rho$  extends a string
in $F_{\sigma}(\sigma')$.
If the submarker on $\sigma'$ has acted, then 
put an $e+1$ or $e$ marker on $\rho$ depending on  whether 
it has a prefix in $F_{\sigma}(\sigma')$ or not (respectively).

\ \paragraph{{\bf Verification}}
First we show that the axioms enumerated for 
$\Phi$ are consistent. The only steps of the construction at which we enumerate 
axioms for $\Phi$ are in 
clauses (2) and (3) of the even stages. Consider first the case that (2) applies at 
stage $s$. Then, prior to this 
stage, we have not enumerated any axioms for $\Phi$ with respect to strings 
extending the submarkers 
(since whenever the marker is injured because clause (1) applies we redefine 
$m_{\sigma}$ to be large). 
The axioms enumerated at this point are therefore unproblematic. Consider next 
the case that (3) applies at 
stage $s$. For each  $\rho\in F_{\sigma}(\sigma')$ for which we enumerate an 
axiom, this string is mapped 
to an extension of $\Phi^{\sigma'}[s]$, and we have not previously enumerated 
axioms with respect to proper 
extensions of $\sigma'$ which are compatible with $\rho$.

Let $T_0$ be the set of strings $\sigma$ on which we place a permanent 
marker that is always inactive after its last injury. 
No markers are placed above inactive markers,
and upon every injury through clause (1) a marker removes all markers placed 
on proper extensions.  The 
set $T_0$ is therefore 
prefix-free.
Moreover, for each $\sigma\in T_0$ we have that  
(\ref{eq:conamasmea}) holds for $c=k+2$ and for all sufficiently large $\ell$. 
 By Lemma \ref{le:cconran} we can find an index of a $\emptyset'$-c.e.\ set of 
strings $V_0$ such that 
$\mu(V_0)<2^{-k-2}$ and, if $\Theta^X$ is 
total and has a prefix in $T_0$, then 
$X$ has a prefix in $V_0$.

For each $e>k+3$ let $T_e$ be the set of strings on which we place permanent 
submarkers which do not 
act, which are placed by permanent $e$-markers which are eventually always 
active. 
If  a permanent $e$-marker is placed on $\sigma$, which places a permanent 
submarker on 
$\sigma'$ which does not act, 
then the construction will not place $e$-markers
on extensions of  $\sigma'$.
Therefore each $T_e$ is a prefix-free set. 
Let $J_e$ be the union of all $F_{\sigma}(\sigma')$
such that $\sigma' \in T_e$ and the submarker on $\sigma'$ is placed by a 
marker on $\sigma$. 
 Since we maintain  (\ref{eq:aapddxxmark}) it follows that: 
\[
\pi(J_e) < \sum_{\sigma'\in T_e} 2^{-q_{\sigma'}} + 
\sum_{\sigma'\in T_e} 2^{-e}\cdot \pi(\sigma').
\]
Summing over all $e$ it follows that we can find an index for a 
$\emptyset'$-c.e.\ set of strings $V_1$, such 
that $\mu(V_1)<2^{-k-2}$ and any $X$ such that $\Theta^X$ extends a string in 
some $J_e$ has an 
extension in $V_1$. 

So far we have dealt with the reals for which outcome (3) occurs. Next we wish to 
show: 
\begin{equation}\label{eq:mclofcdr}
\parbox{10cm}{The class of reals $X$ such that $\Theta^X=Y$ and
for some $e$ there are infinitely many permanent $e$-markers that are placed 
on initial segments of $Y$, has measure zero.}
\end{equation}
For a contradiction, assume that $e>k+3$ and that the class of reals $X$ such 
that $\Theta^X=Y$ and
 there are infinitely many permanent $e$-markers that are placed 
on initial segments of $Y$, is of positive measure. 
Let $D_e$ be the union of all the final values $D_{\sigma}$
such that a  permanent $e$-marker is placed on  $\sigma$.
Now consider the set of $X$ such that $\Theta^X$ is total and does not extend 
any string in $D_e$. This is a 
superset of the set  of reals $X$ such that $\Theta^X=Y$ and
 there are infinitely many permanent $e$-markers that are placed 
on initial segments of $Y$.  Applying Lemma \ref{le:extled} to $\Theta$
 and  $D_e$
we conclude that there exists $X$ such that for any $\epsilon>0$, there exists a 
permanent $e$-marker 
placed on $\sigma \subset \Theta^X$ which is eventually active,  for which the 
proportion of reals $\Theta$-mapped to extensions of $\sigma$ which do not map 
to extensions of any string 
in $D_{\sigma}$, is $<\epsilon$. 
This contradicts (\ref{eq:condb}). 
Since the class of (\ref{eq:mclofcdr}) is a null $\Sigma^0_3$ class,
there is a $\emptyset'$-c.e.\ set of strings $S$ such that
$\mu(S)< 2^{-k-1}$ and every real in the class has a prefix in $S$.
Moreover, an index for $S$ can be computed from an index for the given
$\Sigma^0_3$ class. We let $W = V_0 \cup V_1 \cup S$. 

Finally then, suppose that $\Theta^X=Y$ is total, and that for every $e>k+3$ 
there is a permanent $e$ 
marker placed on some initial segment of $Y$. Let $\sigma$ be the longest initial 
segment of $Y$ on which 
a permanent $e$-marker is placed.  
 This $e$-marker will become active. Let $\sigma'$ be the initial segment of $Y$ 
on which the marker on $
\sigma$ places a permanent submarker. If $Y$ extends a string in 
$F_{\sigma}(\sigma')$ then the submarker 
acts, and in doing so properly extends $\Phi^Y$ and ensures that $R_e$ is 
satisfied with respect to $Y$. 
Otherwise $Y$ does not extend a string in $F_{\sigma}(\sigma')$. In this case  
$R_e$ is automatically 
satisfied with respect to $Y$ because there do not exist any 
extensions of $\Phi^{\sigma'}$ in $W_e$. The 
length of $\Phi^Y$ is increased the last time that $\sigma$ is declared active.
\end{proof}

\section{Bounding a minimal degree}
First of all let us consider some background. Cooper showed that all high 
degrees below $\bf{0}'$ bound 
minimal degrees, and this was extended by Jockusch \cite{Jockusch:77} who 
used the recursion theorem in 
order to show that, in fact, all degrees which are GH$_1$ bound minimal 
degrees. This was shown to be 
sharp by Lerman \cite{ML86}, who constructed a high$_2$ degree which does 
not bound any minimal 
degrees. Next let us consider what happens when we consider Baire category. 
  
\subsection{Category}
As discussed in the introduction, the degrees which do not bound minimals form 
a comeager class 
\cite{Martin:60}, and the level of genericity
that guarantees this property turns out to be 2-genericity 
\cite{Yates:76, Jockusch:80}.
 On the other hand Chong
and Downey \cite{ChoDo90} and (independently)
Kumabe \cite{Kuma:90} constructed a 1-generic degree which bounds a minimal 
degree.
As a point of interest, one can also show that there are non-zero 
hyperimmune-free degrees bounded by 1-generics \cite{AL07, Downey_arithmeticalsacks},  (as well as hyperimmune-free 
degrees that are not bounded by any 1-generic degree).

\subsection{Measure}
A sufficiently random degree does not bound minimal degrees.
This follows from a paper by Paris \cite{paris77}, where it is shown that 
the degrees with minimal predecessors
form a class of measure 0.
A substantial refinement of this result was given by Kurtz \cite{Kurtz:81}
(also see \cite[Section 7.21.4]{rodenisbook}),
who showed that for almost all degrees $\mathbf{a}$ (i.e.\ all but a set
of measure 0) if $\mathbf{0}<\mathbf{b}\leq\mathbf{a}$ 
then $\mathbf{b}$ bounds a 1-generic degree.
In other words, for almost all degrees $\mathbf{a}$ 
the class of 1-generic degrees is downward dense below $\mathbf{a}$.
Since 1-generic degrees are not minimal (by \cite{Jockusch:80})
this implies Paris' result. Both of these
arguments, however, where achieved by way of contradiction 
 and do not allow
a clear view of the level of randomness that is required.
In \cite[Section 7.21.4, Footnote 15]{rodenisbook}, for example, 
the authors note that 
the precise level of randomness which guarantees Kurtz's result was not known. 
In Section \ref{se:bound1gendeg} we answered this
question by proving that every non-zero degree bounded by a 2-random 
computes a 1-generic. 

\begin{coro}
If a degree is 2-random then it does not have minimal predecessors.
\end{coro}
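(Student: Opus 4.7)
The plan is to deduce the corollary directly from Theorem \ref{th:2ranb1gen} together with the known fact (due to Jockusch \cite{Jockusch:80}, already invoked in the discussion immediately above) that no 1-generic degree is minimal. Essentially the corollary is the contrapositive packaging of these two ingredients.

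In more detail, suppose for contradiction that $\mathbf{a}$ is 2-random and has a minimal predecessor $\mathbf{b}$, so $\mathbf{0} < \mathbf{b} \leq \mathbf{a}$. Since $\mathbf{b}$ is a non-zero degree bounded by the 2-random degree $\mathbf{a}$, Theorem \ref{th:2ranb1gen} applies and yields a 1-generic degree $\mathbf{c}$ with $\mathbf{c}\leq \mathbf{b}$; since 1-generic degrees are non-zero we have $\mathbf{0} < \mathbf{c}\leq\mathbf{b}$. Now minimality of $\mathbf{b}$ forces $\mathbf{c}=\mathbf{b}$, so $\mathbf{b}$ itself is 1-generic. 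But a 1-generic degree bounds no minimal degree (indeed, is not even minimal), contradicting the minimality of $\mathbf{b}$.

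The only step that might require comment is the last one, namely that $\mathbf{b}$ cannot simultaneously be minimal and 1-generic. This is standard: a 1-generic real always computes properly both a $0$-generic splitting of its initial segments and enough forcing information to avoid being of minimal degree (e.g.\ it has a non-trivial c.e.\ predecessor, or equivalently its degree is not minimal by \cite{Jockusch:80}). No construction is needed here, only a citation. Thus the proof of the corollary consists of two lines: invoke Theorem \ref{th:2ranb1gen} to produce a 1-generic below the purported minimal predecessor, then invoke non-minimality of 1-generics to derive the contradiction.

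There is no real obstacle; the content of the corollary has been absorbed entirely into Theorem \ref{th:2ranb1gen}, and the present statement is just the announced strengthening of Paris' result \cite{paris77} via the stronger conclusion (downward density of 1-generics below a 2-random) already established in Section \ref{se:bound1gendeg}.
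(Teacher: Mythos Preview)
Your proof is correct and is essentially the paper's own argument: apply Theorem \ref{th:2ranb1gen} to obtain a 1-generic below the putative minimal predecessor, then invoke the fact from \cite{Jockusch:80} that 1-generic degrees are not minimal. One small aside: your parenthetical suggestion that a 1-generic has a ``non-trivial c.e.\ predecessor'' is not the actual reason for non-minimality (the standard argument splits a 1-generic $A$ into its even and odd halves, each of strictly smaller degree), but since you correctly defer to the citation this does not affect the proof.
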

\begin{proof}
This is a consequence of Theorem \ref{th:2ranb1gen},
since 1-generic degrees cannot be minimal.
\end{proof}

In the remainder of this section we show that these results are optimal.
In other words, 2-randomness cannot be replaced 
with any of the standard weaker forms of randomness.
It is not hard to show that 
{\em there is a Demuth random degree which bounds a minimal degree}.
By \cite[Theorem 3.6.25]{Ottobook} there is a Demuth random
real which is $\Delta^0_2$.
All 1-random degrees, and so all Demuth random degrees, are fixed point free.  
Ku\v{c}era's technique of 
fixed point free permitting shows that all fixed point free $\Delta^0_2$ degrees 
bound non-zero c.e. degrees. 
By \cite{Yates:70*1} every non-zero c.e.\ degree bounds a minimal degree.

In order to show that there is 
a weakly 2-random degree which bounds a minimal degree we
will use the following characterization of weak 2-randomness.
\begin{equation}\label{eq:charw2r}
\parbox{11cm}{A 1-random real is weakly 2-random iff it forms 
a minimal pair with $\mathbf{0}'$.}
\end{equation}
This characterization was proved in \cite{DNWY} and
was essentially based on a theorem by 
Hirschfeldt and Miller  on $\Sigma_3^0$ 
null classes (see
\cite[Theorem 6.2.11]{rodenisbook}  or  
\cite[Theorem 5.3.16]{Ottobook} for more 
details). 
As mentioned previously, in  \cite{Jockusch:77} 
it was shown that every 
generalized high degree  bounds a 
minimal degree.
Hence to exhibit a weakly 2-random degree 
which bounds a minimal degree it 
suffices to exhibit
a generalized high weakly 2-random degree.
Given (\ref{eq:charw2r}) it suffices to show that every 
 \pz class of positive measure
has a 
member of 
generalized high degree which 
forms a minimal pair with $\mathbf{0}'$. 
For more basis theorems of this type 
(involving \pz classes and degrees which form a minimal pair 
with $\mathbf{0}'$) we refer the reader to
\cite[Sections 2,3]{BDNGP}. 
Note that this statement, which will be 
proved as Theorem \ref{th:gjhiera}, is not 
true for 
all \pz classes with no computable paths. Indeed, 
it is well known that there is such a class for which all members are generalized 
low 
(\cite{MR1720779}).

The proof of Theorem \ref{th:gjhiera} 
uses a basic strategy for dealing
with the minimal pair requirements in 
\pz classes (as in \cite[Section 2.1]{BDNGP})
combined with the method of Ku\v{c}era \cite{MR820784}
for coding information into the jump of a random set.
A detailed presentation of the latter 
can be found in  \cite[Section 1.2]{BDNGP}.
Coding into random sets (or their jumps) is based on the
following fact from
Ku\v{c}era \cite{MR820784}.
Let $ \{ P_e \}_{e\in \omega}$ be an 
effective enumeration of all \pz classes. We say $\tau$ is $P_e$-extendible if it 
has an infinite extension in 
$P_e$. 

\begin{equation}\label{eq:kuba}
\parbox{11cm}{There exists a \pz class  $P$ 
 of positive measure and  
a computable function $g$ of two arguments such that,
for all $P$-extendible strings $\tau$ and all 
$e\in\Nat$, if $P\cap P_e\cap [\tau]\neq\emptyset$ 
there exist at least two $P\cap P_e$-extendible 
strings of length $g(|\tau|,e)$ with common prefix $\tau$.}
\end{equation}
Note that (\ref{eq:kuba}) also holds for every $\Pi^0_1$ subclass 
of $P$ in place of
$P$. Moreover,  according to 
\cite{MR820784}
the class $P$ can be assumed to contain only 1-random reals
and may be chosen to have measure
that is arbitrarily close to 1.
As a consequence, for each string $\tau$, if 
$P\cap [\tau]$ is nonempty then
it has positive measure.

Roughly speaking, constructing a random set $A$ whose
jump $A'$ has a certain computational power, involves
an oracle construction that looks like forcing with \pz classes,
but typically involves injury amongst the \pz conditions.
In particular, a sequence $\{ Q_s \}_{s\in \omega} $ of \pz classes of 1-random 
reals
is defined in stages, along with a monotone sequence 
$\{ \tau_s \}_{s\in \omega}$ 
of strings (so that ultimately we can define $A=\cup_s\tau_s$) 
but we do not always have $Q_s\supseteq Q_{s+1}$.
The coding of a certain event (which is $\Sigma^0_1$ relative to the oracle used 
to run the construction)
into $A'$ is associated with a certain class $Q_s$. Then 
the $Q_{s'}$ for $ s'>s$ are defined as subclasses of $Q_s$ and
the $\tau_{s'}$ for  $s'>s$ are extendible in $Q_s$. If and when the
aforementioned $\Sigma^0_1$ event occurs, however, the construction
defines an initial segment of $A$ in such a way as to ensure  $A\not\in Q_s$.
This action codes the event into $A'$ and may cause injury
to lower priority requirements (whose satisfaction relied on a
\pz condition that may no longer be valid).
This intuitive description may be helpful in visualising
the proof of Theorem \ref{th:gjhiera}.

\begin{thm}\label{th:gjhiera}
Given a  \pz class $P$ of positive measure  
there is  $A\in P$ which is generalized high and 
forms a minimal pair with $\emptyset'$. 
Moreover $A\leq_T\emptyset''$.
\end{thm}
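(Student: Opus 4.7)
The plan is to run a $\emptyset''$-computable priority argument that produces a sequence of $\Pi^0_1$ classes $Q_0,Q_1,\ldots$ (descending except at injury stages) and a monotone sequence of strings $\tau_0 \subsetneq \tau_1 \subsetneq \cdots$ with each $\tau_s$ extendible in $Q_s$, and sets $A := \bigcup_s \tau_s$. The construction combines the basic strategy for minimal-pair requirements in $\Pi^0_1$ classes from \cite[Section 2.1]{BDNGP} with the Ku\v{c}era coding technique of \cite[Section 1.2]{BDNGP}, as anticipated by the paragraph preceding the theorem.

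First, replace the given $P$ by $Q_0 := P \cap C$, where $C$ is a Ku\v{c}era class as in (\ref{eq:kuba}) consisting only of 1-random reals, chosen with $\mu(C) > 1 - \mu(P)$ so that $Q_0$ has positive measure; then (\ref{eq:kuba}) applies to $Q_0$ and to every $\Pi^0_1$ subclass of it. Stages are organised by priority into two interleaved families. Coding stages, one per $n \in \omega$, make $A$ generalized high by coding $\emptyset''$ into $A'$. At the $n$-th such stage we fix the current $(Q_s, \tau_s)$, apply (\ref{eq:kuba}) to obtain two $Q_s$-extendible extensions $\rho_0, \rho_1$ of $\tau_s$ of the same length, designate $\rho_0$ as the default branch, and associate with $n$ a $\Sigma^0_1(\emptyset')$ enumeration whose firing represents the event ``$n \in \emptyset''$''. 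Should the event ever be observed, the construction abandons $[\rho_0]$ and reroutes through $\rho_1$, retracting all lower-priority commitments. In the limit $A \in [\rho_0]$ iff $n \notin \emptyset''$, a $\Pi^0_1(A)$ fact, so $A'$ decodes $\emptyset''$ and $\emptyset'' \leq_T A'$; combined with $A \leq_T \emptyset''$ (because each stage is $\emptyset''$-effective), this gives generalized highness.

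Minimal-pair stages, one per pair $(e,i)$, enforce the requirement $R_{e,i}$: if $\Phi_e^A = \Phi_i^{\emptyset'}$ is total then it is computable. Using $\emptyset''$ we first ask a $\Sigma^0_2$ question about the current $(Q_s, \tau_s)$: is there a $Q_s$-extendible $\sigma \supseteq \tau_s$ and an $x$ with $\Phi_e^\sigma(x) \downarrow \neq \Phi_i^{\emptyset'}(x) \downarrow$? If yes, set $Q_{s+1} := Q_s \cap [\sigma]$ and $\tau_{s+1} := \sigma$ to diagonalise permanently. If no, the basic strategy from \cite[Section 2.1]{BDNGP} applies: using $\emptyset''$, we pass to a $\Pi^0_1$ subclass of $Q_s$ on which $\Phi_e^A$ is either forced partial, forced to disagree with $\Phi_i^{\emptyset'}$ via a $\Phi_e$-splitting (whose existence, combined with totality of $\Phi_i^{\emptyset'}$, yields a branch on which $\Phi_e^A \neq \Phi_i^{\emptyset'}$), or pinned to a single computable function on the extendible strings, satisfying $R_{e,i}$ in every case.

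The main obstacle is controlling the interaction between Ku\v{c}era redirections and minimal-pair commitments: each firing of a higher-priority coding event shifts the ambient class from $Q_s \cap [\rho_0]$ to $Q_s \cap [\rho_1]$, invalidating every lower-priority $\Pi^0_1$ refinement built underneath. The key point is that (\ref{eq:kuba}) is inherited by every $\Pi^0_1$ subclass of $C$, so after any such injury the Ku\v{c}era machinery continues to function inside the new ambient class, and the minimal-pair strategy can simply be restarted there. A standard finite-injury accounting (each strategy is injured only by higher-priority coding stages, of which only finitely many lie above it) shows that each requirement is eventually satisfied permanently. Since every query made along the way is at worst $\Sigma^0_2$, the construction is $\emptyset''$-computable and $A \leq_T \emptyset''$.
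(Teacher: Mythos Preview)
Your argument for generalized highness has a genuine gap. You claim that coding $\emptyset''$ into $A'$, together with $A\leq_T\emptyset''$, yields GH$_1$. This implication is false. Take any $1$-generic $A$ with $A\oplus\emptyset'\equiv_T\emptyset''$ (such exist by the Friedberg completeness criterion applied to $1$-generics). Then $A'\equiv_T A\oplus\emptyset'\equiv_T\emptyset''$, so $\emptyset''\leq_T A'$ and $A\leq_T\emptyset''$ both hold; yet $(A\oplus\emptyset')'\equiv_T\emptyset'''>_T\emptyset''\equiv_T A'$, so $A$ is not GH$_1$. What GH$_1$ actually demands is $(A\oplus\emptyset')'\leq_T A'$, and getting $\emptyset''$ below $A'$ is simply not the same thing.

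The paper avoids this by coding $(A\oplus\emptyset')'$ (not $\emptyset''$) into $A'$: the $N_e$ strategy watches the $\Sigma^0_1(A\oplus\emptyset')$ event ``$\Psi_e^{\tau_s\oplus\emptyset'}\!\downarrow$''. For the decoding to work, the whole construction must be replayable from $A\oplus\emptyset'$, and this is where your minimal-pair stages cause a second problem. You decide each $R_{e,i}$ by asking a raw $\Sigma^0_2$ question and then branching on the answer; nothing in the resulting $\tau_{s+1}$ tells $A\oplus\emptyset'$ which branch you took, so the construction cannot be replayed without re-asking $\emptyset''$, and your decoding of the coding bits (``find $\rho_0$ and check whether $A\supset\rho_0$'') becomes circular. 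The paper's minimal-pair stage is engineered to avoid exactly this: it first uses Ku\v{c}era branching to produce the leftmost and rightmost extendible strings $\rho_0,\rho_1$, asks the $\Sigma^0_2$ question only about the $\rho_1$-side, and records the answer by sending $A$ into $[\rho_1]$ or $[\rho_0]$ accordingly. Thus $A\oplus\emptyset'$ recovers every decision made, the construction is $A\oplus\emptyset'$-computable, and the coding of $(A\oplus\emptyset')'$ into $A'$ goes through.
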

\begin{proof}
The construction is a forcing argument
 with \pz classes
of positive measure, in which we allow finite injury amongst
the \pz conditions (and the requirements that these represent). 
The construction will proceed
in stages, computably in $\emptyset''$, 
defining a \pz class $Q_s$ and a string $\tau_s$
at stage $s\in\omega$. We will have 
$\tau_s\subset\tau_{s+1}$ for each $s$ and
will eventually define $A=\cup_s\tau_s$. 
However, we may have $Q_s\not\supseteq Q_{s+1}$, which indicates an injury 
that is caused
by the coding of $(A\oplus \emptyset')'$ into $A'$.
The minimal pair requirements may be expressed as follows:
\[
R_e: \textrm{If}\  \Psi_e^{\emptyset'}\ \textrm{is total and incomputable}\  
\textrm{then}\ 
 \Psi_e^{\emptyset'}\neq \Psi_e^A.
\]
Stages in $2\omega^{[e]}$ will be devoted to the satisfaction of
$R_e$. We may need to act (finitely) many times for each $R_e$
due to the injuries to requirements that may occur. 
Stages in $2\omega +1$ will be devoted to coding $(A\oplus\emptyset')'$
into $A'$. In particular, 
stages in $2\omega^{[e]} +1$ are devoted to satisfying the requirement $N_e$ 
that we
 code into $A'$ whether or not $e$
belongs to  $(A\oplus\emptyset')'$.
By \cite{MR820784}
 we may assume that the given
class $P$ is the same as the class of
(\ref{eq:kuba}), with the additional properties mentioned in the
paragraph below it. 
Let $\tau_0=\emptyset$, $Q_0=P$ and consider the function $g$
of (\ref{eq:kuba}). For the purposes of this proof we assume that if 
$n\in \omega^{[e]}$ then either 
$n+1\in \omega^{[e+1]}$ or $n+1\in \omega^{[0]}$.

\ \paragraph{{\bf Construction}}
At stage $s+1\in 2\omega^{[e]}$ let $j_s$ be an index for $Q_s$. 
Let $\rho_0$ and $\rho_1$ be, respectively, the leftmost and rightmost 
extensions of $\tau_s$ which are extendible in $Q_s$ and are  of length
$g(|\tau_s|,j_s)$.
 Check to see whether there exists $n$ such that  
 $\Psi_e^{\emptyset'}(n)\de=m$ and:
 \begin{equation}\label{eq:pedefcon}
 Q_{s}\cap [\rho_1]\cap \{X\ |\ \Psi_e^X(n)\downarrow \neq m\ 
 \vee\ \Psi_e^X(n)\un\}\neq\emptyset.
 \end{equation}
 If there is such $n$ then consider the least one,  set $Q_{s+1}$ 
 equal to the \pz class of (\ref{eq:pedefcon}) and define
 $\tau_{s+1}=\rho_1$. Otherwise, 
 let $Q_{s+1}:=Q_{s}\cap [\rho_0]$ and define $\tau_{s+1}=\rho_0$.

At stage $s+1\in 2\omega^{[e]}+1$ let $j_s$ be an index for $Q_s$. 

We consider first the case that $Q^{\ast}_e$ and $f_e$ are undefined. In this 
case proceed as follows. 
Let $Q^{\ast}_e=Q_s$
and define $f_e$ by recursion:  $f_e(0)=|\tau_s|$ and 
$f_e(k+1)=g(f_e(k),j_s)$.
Also, let $Q_{s+1}$ consist of all elements of $Q_s$ except
those that extend any string $\rho$ which satisfies the following: 
there exists $k\in \omega $ and  $\tau$ of 
length $f_e(k)$, such that $\rho$ is the leftmost extension of $\tau$  of length 
$f_e(k+1)$ which is extendible 
in $Q_s$. 
By the choice of $g$ it follows that $Q_{s+1}$ is
a non-empty \pz class.
Also let $\tau_{s+1}$ be the 
leftmost one-bit extension of $\tau_s$ which is extendible in
$Q_{s+1}$.

If $Q^{\ast}_e, f_e$ are defined at stage $s+1$, 
let $t$ be the stage at which they
were last defined (i.e.\ the greatest stage $\leq s$ such that these values were 
undefined at the beginning of 
the stage and were made defined according to the instructions for that stage). If 
$N_e$ {\em acted} 
after stage $t$ or 
$\Psi_e^{\tau_s\oplus\emptyset'}[s]\un$, 
then let $Q_{s+1}=Q_s$ 
and let $\tau_{s+1}$ be the 
leftmost one-bit extension of $\tau_s$ which is extendible in
$Q_s$.
On the other hand, if  
$\Psi_e^{\tau_s\oplus\emptyset'}[s]\de$, 
then let $\rho$ be the least $Q_s$-extendible extension
of $\tau_s$ of length in $f_e(\omega)$ and define
$\tau_{s+1}$ to be the leftmost extension of $\rho$
of length $f_e(|\rho|)$
which is extendible in $Q^{\ast}_e$. In the latter case define  
$Q_{s+1}=Q^{\ast}_e$, declare  that $N_e$ has 
{\em acted} at this stage
 and make $Q^{\ast}_j, f_j$ undefined for all $j>e$.
Note that when determining the value of 
$\Psi_e^{\tau_s\oplus\emptyset'}[s]$, the construction uses the true
initial segment of $\emptyset'$ of length $s$, and not the result of enumerating
$\emptyset'$ for $s$ steps.

 \ \paragraph{{\bf Verification}}
Let $A=\cup_s\tau_s$ and note that $A\in P$. 
First, we show  by induction on $e$ that 
each $N_e$ acts finitely often (with $Q^{\ast}_e$ and $ f_e$  eventually being 
permanently defined).
Suppose that this holds for all $N_j$, $j<e$.
At the first stage $s_0$ in $2\omega^{[e]}+1$ 
after the last action of some $N_j$, $j<e$
the construction will define $Q^{\ast}_e$ and $ f_e$. By the choice
of $s_0$ it follows that these values will never subsequently be made undefined.
 Therefore after
stage $s_0$ requirement $N_e$ can act at most once.
This concludes the induction step.

We show next that $A$ satisfies all $R_e, e\in\omega$.
Pick $e\in\omega$ and consider the least stage $s+1$ in 
$2\omega^{[e]}$ which is greater than all the stages at which 
some $N_j$ acts for $ j<e$. 
Then $A\in Q_{s+1}$ because we have $Q^{\ast}_j\subseteq Q_{s+1}$
for all $j$ such that $N_j$ acts in later stages.
If $Q_{s+1}$ is defined according to 
(\ref{eq:pedefcon}) then clearly  
$ \Psi_e^{\emptyset'}(n)\neq \Psi_e^A(n)$.
If, on the other hand, we define 
$Q_{s+1}:=Q_{s}\cap [\rho_0]$, this means that
either $\Psi_e^{\emptyset'}$ is partial or  
$\Psi_e^X$ is total for all $X\in Q_s\cap [\rho_1]$
and agrees with  $\Psi_e^{\emptyset'}$. The latter
condition implies that 
 $\Psi_e^{\emptyset'}$ is computable. In either case 
 $A$ satisfies $R_e$.
 
 It remains to show that
 $(A\oplus\emptyset')' \leq_T A'$.
 First of all note that the construction is not only computable in
 $\emptyset''$ (so that $A\leq_T\emptyset''$) but also 
 $A\oplus\emptyset'$. Indeed, the only place
 where we used more than $\emptyset'$
 in order to define $\tau_{s+1}$ and $Q_{s+1}$
 was in stages $2\omega^{e}$.
In these stages, in order to decide which clause we follow
it suffices to calculate $\rho_0$ and $\rho_1$
(using $\emptyset'$) and check which of these strings
the set $A$ extends. If it extends $\rho_1$ then we
defined $Q_{s+1}$ according to (\ref{eq:pedefcon}); otherwise
we followed the second clause.

The algorithm which calculates $(A\oplus\emptyset')'$ from $A'$
 is as follows.
Given $e\in\omega$ suppose that we have used the oracle for $A'$ to calculate
$(A\oplus\emptyset')'\restr_e$ and the least stage $s_e$ after
which no $N_j, j<e$  {\em acts}. 
Let $t_e>s_e$ be the least in $2\omega^{e}+1$. 
Then by stage $t_e$ the parameters
$Q^{\ast}_e, f_e$ have reached their eventual values. Moreover, using
$A, \emptyset'$, we may play back the construction up to this stage
and calculate the final values of $Q^{\ast}_e$ and $ f_e$. Then  
$e\in (A\oplus\emptyset')'$ if and only if 
$N_e$ {\em acts},
and this happens if and only if there exists $k\in\omega$ such that
$A\restr_{f_e(k+1)}$ is the leftmost extension of
$A\restr_{f_e(k)}$ of length $f_e(k+1)$ which is extendible in $Q^{\ast}_e$.  
Once we have determined 
whether $N_e$ acts subsequent to stage $t_e$, this suffices to specify 
$s_{e+1}$. 
\end{proof}
\noindent
We can now obtain the desired result.
\begin{coro}\label{coro:wdbminim}
There is a weakly 2-random degree which bounds a minimal degree.
\end{coro}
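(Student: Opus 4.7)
The plan is to assemble the corollary from three ingredients already put in place in the excerpt: Kučera's existence of a $\Pi^0_1$ class of positive measure consisting of $1$-random reals, Theorem \ref{th:gjhiera}, and the characterization (\ref{eq:charw2r}) of weak $2$-randomness, together with Jockusch's theorem that every GH$_1$ degree bounds a minimal degree.

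First I would fix a $\Pi^0_1$ class $P$ of positive measure all of whose members are $1$-random; such a class exists by the discussion following (\ref{eq:kuba}). Then I would apply Theorem \ref{th:gjhiera} to $P$ to obtain a real $A \in P$ which is generalized high and forms a minimal pair with $\emptyset'$. Since $A\in P$, $A$ is $1$-random, and since $A$ forms a minimal pair with $\emptyset'$, the characterization (\ref{eq:charw2r}) immediately gives that $A$ is weakly $2$-random. Thus the Turing degree $\mathbf{a}$ of $A$ is weakly $2$-random and generalized high.

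Finally, by Jockusch \cite{Jockusch:77} every generalized high degree bounds a minimal degree, so $\mathbf{a}$ bounds a minimal degree, giving the required witness. Essentially all the work has already been absorbed into Theorem \ref{th:gjhiera}; there is no genuine obstacle here, the corollary is a straightforward concatenation of these facts. The only subtlety worth flagging is that one must begin with a $\Pi^0_1$ class of $1$-randoms (rather than an arbitrary $\Pi^0_1$ class of positive measure) so that the conclusion ``$A$ is $1$-random'' is available to feed into (\ref{eq:charw2r}); this is ensured by the choice of $P$ from (\ref{eq:kuba}).
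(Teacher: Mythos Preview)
Your proof is correct and follows essentially the same route as the paper: apply Theorem \ref{th:gjhiera} to a nonempty $\Pi^0_1$ class of Martin-L\"of randoms, use (\ref{eq:charw2r}) to conclude the resulting $A$ is weakly 2-random, and invoke Jockusch \cite{Jockusch:77} on the generalized high degree to obtain a minimal degree below. The paper's proof is terser but identical in content.
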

\begin{proof}
This is a consequence of (\ref{eq:charw2r}),
combining the fact from Jockusch \cite{Jockusch:77} that
every GH$_1$ degree bounds a minimal degree, and the 
application of Theorem \ref{th:gjhiera} to 
a nonempty \pz class which consists entirely of \ml random paths. 
\end{proof}
\noindent
Note that by Theorem \ref{th:gjhiera}, the degree of
Corollary \ref{coro:wdbminim} may be chosen below $\mathbf{0}''$.

Theorem \ref{th:gjhiera}  may be seen as a dramatic strengthening of the result 
proved in \cite{LMN07}, that 
there exists a weakly 2-random set which is not generalized low. It  also gives a 
rather simple 
positive answer to
\cite[Problem 3.6.9]{Ottobook} which asks whether all weakly
2-random sets are array computable, since  array computable
sets $A$ are generalized low$_2$. 
 This problem was first solved in
\cite[Section 5]{BDNGP} where a much stronger result
was shown using a  different but more complicated argument.
It was shown there that for every function $f$ there exists
a function $g$ which is computable in a weakly 2-random
set and which is not dominated by $f$.

\section{Minimal covers}
First of all we consider some background. The most well known theorem here is 
the result of Jockusch that 
there exists a cone of minimal covers \cite{CJ73}. This follows from the fact that 
the corresponding Gale-
Stewart game is determined. By considering a pointed tree such that every path 
through the tree is a play of 
the game according to the winning strategy, we conclude that either there is a 
cone of minimal covers, or 
else a cone of degrees which are not minimal covers. Clearly the latter is 
impossible. Next let us consider 
what happens when we consider Baire category. 
 
\subsection{Category}
The degrees that are minimal covers form a comeager set, so a
sufficiently generic degree is a minimal cover of some other degree.
In fact, Kumabe \cite{Kuma:93} 
showed that for each $n>1$, 
every $n$-generic is a minimal cover of an $n$-generic. The question left open 
here, is as to whether or not 
this result is sharp:

\begin{question} Is every 1-generic degree a minimal cover?
\end{question}

\noindent At the time of writing it seems likely that Durrant and Lewis are able to 
answer this question in the 
negative.

\subsection{Measure}
Not very much is known as regards the measure theoretic case here. The basic 
question remains: 
\begin{question} \label{mincovq}
What is the measure of the degrees which are a minimal cover? 
\end{question}
\noindent 
By \cite{Kurtz:81, Kautz:91} (also see \cite[Section 8.21.3]{rodenisbook})
 every 2-random degree is c.e.\ relative to some 
degree strictly below it.
Hence we may deduce that every 2-random degree bounds a minimal cover. 
This follows by 
relativizing the proof from 
\cite{Yates:70*1} that every non-zero c.e.\ degree bounds a minimal degree. 
Thus, if we are to believe the 
heuristic principle, that properties satisfied by all highly random degrees are likely 
to hold for all non-zero 
degrees below a  highly random, then we would expect the answer to Question 
\ref{mincovq} to be 1.

\section{Strong minimal covers and the cupping property}
A degree $\mathbf{a}$ is a strong minimal cover of another degree
$\mathbf{b}<\mathbf{a}$ if for all degrees $\mathbf{x}<\mathbf{a}$
we have $\mathbf{x}\leq \mathbf{b}$.
Notice that a strong minimal cover is not the join of two lesser degrees.
All the known examples of degrees that fail to have
a strong minimal cover satisfy the  {\em cupping property}.
A degree $\mathbf{a}$ is said to have this property if for all
$\mathbf{c}>\mathbf{a}$ there exists $\mathbf{b}<\mathbf{c}$
such that $\mathbf{a}\vee \mathbf{b}=\mathbf{c}$.
Clearly, a degree which has a strong minimal cover fails to satisfy the
cupping property. However it is not known if the converse holds.

\subsection{Category}
It is important to distinguish between the degrees that \emph{are} a strong 
minimal cover and the degrees 
which \emph{have} a strong minimal cover. The strong minimal covers form a 
meager class: if $A\oplus B$ is 
1-generic then the Turing degrees of $A$ and $B$ are
strictly less than the degree of $A\oplus B$ . Hence strong minimal covers
are not 1-generic.
On the other hand, 
the degrees which satisfy the cupping property form a comeager class, and so 
 the degrees which have a strong minimal cover also form a meager class.
In fact, Jockusch \cite[Section 6]{Jockusch:80} showed that
every $2$-generic degree has the cupping property and thus fails to
have a strong minimal cover. This can easily be extended to the weakly 2-
generics, by showing that all 
weakly 2-generics are a.n.r.\footnote{Recall that $A$ is array non-recursive 
(a.n.r.) if, for every 
$f\leq_{wtt} \emptyset'$ there exists 
$g\leq_T A$ which is not dominated by $f$.}, since it was 
shown in \cite{DJS96} that 
all a.n.r.\ degrees satisfy the cupping property. In order to show that every weakly 
2-generic set $A$ is a.n.r., 
consider the function $g_A$ which specifies the number of consecutive  0s in the 
obvious way, so that if 
\[ A=11001111000011\cdots \] 
then $g_A(0)=2$ and $g_A(1)=4$, for example. Given $f\leq_T \emptyset'$ (we 
do not require $f\leq_{wtt} 
\emptyset'$), let $h\leq_T \emptyset'$ be a function which on input $\sigma$ 
outputs $\tau \supset \sigma$ 
with $g_B(|\sigma|)>f(|\sigma|)$ for all $B\supset \tau$. For every $l$, let 
$V_l=\{ h(\sigma) :\ |\sigma|>l \}$. 
Each $V_l$ is dense, so $A$ must have an initial segment in each $V_l$. Thus 
$g_A$ is not dominated by 
$f$. 
 
On the other hand, Kumabe \cite{Kuma:00}
constructed a 1-generic degree with a strong
minimal cover.

\subsection{Measure}
The strong minimal covers form a null class. Indeed, 
if $A\oplus B$ is 1-random then the Turing degrees of $A$ and $B$ are
strictly less than the degree of $A\oplus B$ . Hence strong minimal covers
are not 1-random. We shall show in Section \ref{sec:joinprop} that, in fact, every 
non-zero degree bounded 
by a 2-random satisfies the join property, and this suffices to show that no degree 
bounded by a 2-random is 
a strong minimal cover. 
On the other hand, the measure of the degrees which \emph{have} a strong 
minimal cover is 1.
Barmpalias and Lewis showed in \cite{BL2011} that every 2-random degree
has a strong minimal cover, and so fails to satisfy the cupping property.
In the same paper we pointed out that this result fails if 2-randomness
is replaced with weak 2-randomness.

\begin{thm}
Every degree that is bounded by a 2-random degree has a 
strong minimal cover. Hence no such degree has the cupping property.
\end{thm}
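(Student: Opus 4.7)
The plan is to combine the strong minimal cover construction of Barmpalias--Lewis \cite{BL2011} with the downward-density methodology developed in Section \ref{subse:allnzdbba}. Let $A$ be 2-random with $B=\Theta^A$ incomputable for a Turing reduction $\Theta$, which by Lemma \ref{coro:replfunat} I may take to be special (the case $\mathbf{b}=\mathbf{0}$ being trivial, since any minimal degree is a strong minimal cover of $\mathbf{0}$). I aim to describe a uniform procedure which, on input $k\in\omega$, produces a $\emptyset'$-c.e.\ set of strings $W$ with $\mu(W)<2^{-k}$ together with a Turing functional $\Phi$ such that for every real $X$ with $\Theta^X$ total and no prefix in $W$, the real $C=\Phi^X$ satisfies $\Theta^X\leq_T C$ and $\deg(C)$ is a strong minimal cover of $\deg(\Theta^X)$. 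Specializing to $X=A$ then gives the desired strong minimal cover of $\deg(B)$.

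The requirements are the standard ones for a strong minimal cover: for each Turing functional $\Psi_e$,
\[
R_e:\ \Psi_e^C\ \text{is partial}\ \vee\ \Psi_e^C\leq_T \Theta^X\ \vee\ C\leq_T \Psi_e^C.
\]
The atomic strategy for $R_e$ above a string $\sigma$ in the image of $\Theta$ follows the Spector--Sacks dichotomy. Using Lemma \ref{le:mediwasplitbfa} with an appropriate parameter $\epsilon=\epsilon(e,\sigma)$, an oracle for $\emptyset'$ decides whether a $\Psi_e$-splitting above $\sigma$ of sufficient $\Theta$-measure exists: if no such splitting exists, a c.e.\ set of strings of measure $\leq 2\epsilon$ is added to $W$ and along the remaining reals $\Psi_e^C$ can be reconstructed from $\Theta^X$ via the non-splitting restriction; if such a splitting does exist, we extend $C$ along a $\Psi_e$-splitting subtree whose branches are recoverable from their $\Psi_e$-image, which secures $C\leq_T \Psi_e^C$. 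These atomic strategies are then embedded in the marker/submarker framework of Section \ref{subse:allnzdbba}: each $e$-marker placed on $\sigma$ reserves a $2^{-e}$-proportion of $\pi(\Theta,\sigma)$, and the submarkers beneath it are responsible for running the splitting search and extending the $\Psi_e$-splitting tree one level at a time. The outcome analysis proceeds exactly as in that section: outcome (3) (finitary failure) contributes a $\emptyset'$-c.e.\ set of small measure via Lemma \ref{le:cconran} and the uniformity of Lemma \ref{le:mediwasplitbfa}, while outcome (2) (infinitary failure) yields a null $\Sigma^0_3$ class by the $\Theta$-density Lemma \ref{le:extled}, so both can be folded into $W$ within the budget $2^{-k}$.

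The main obstacle is that, unlike the $\Sigma^0_1$ genericity requirements in the proof of Theorem \ref{th:2ranb1gen}, strong minimal cover requirements are of perfect-tree type: each submarker must manage an entire subtree of $\Psi_e$-splittings, not a single one-bit diagonalising witness, and the subtrees must be arranged so that a branch is uniformly recoverable from its $\Psi_e$-image. The key ingredient for reconciling this with the measure budget is Proposition \ref{prop:psigotz}: it ensures that the $\Theta$-measure $\pi(\Psi_e\circ\Theta,\rho)$ of extensions of any image string tends to zero, so the perfect subtree chosen by a submarker can be thinned level by level while keeping the reserved measure a stable $2^{-e}$-fraction of $\pi(\Theta,\sigma)$ and while allowing condition \eqref{eq:aactivcon} to be maintained after each injury. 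Once all markers stabilize for $X\notin \llbracket W\rrbracket$, outcome (1) holds and every $R_e$ is met, showing that $\Phi^X$ provides a strong minimal cover of $\deg(\Theta^X)$. The second sentence of the theorem is then immediate, since a degree with a strong minimal cover cannot satisfy the cupping property.
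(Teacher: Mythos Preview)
Your approach is genuinely different from the paper's, and it has a real gap.

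The paper does \emph{not} run a Spector--Sacks style construction through the marker/submarker machinery of Section~\ref{subse:allnzdbba}. Instead it makes a one-line modification to the Barmpalias--Lewis argument from \cite{BL2011}. That argument shows, for each $j,n$, how to produce a $\emptyset'$-c.e.\ test $W_e^{\emptyset'}$ of measure $<2^{-n}$ and functionals $\Xi,\Phi$ so that whenever $X\notin\llbracket W_e^{\emptyset'}\rrbracket$ and $\Psi_j^X=T$ is a perfect tree, $\Xi^X$ is a perfect pointed subtree $T'\subseteq T$ with $\Phi^Y=X$ for every path $Y$ through $T'$. The only change needed for the present theorem is to make $\Xi$ act on the tree $T$ rather than on $X$, and to have $\Phi^Y=T$ rather than $\Phi^Y=X$. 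This removes the ambiguity arising when distinct $X$'s give the same $T$, and it is harmless because we now only need the paths to recover $T$. The strong-minimal-cover conclusion for any $B\leq_T A$ then follows from the standard perfect-pointed-subtree characterisation, since every perfect tree $T\leq_T B$ is also $\leq_T A$ and hence computes a perfect pointed subtree of itself.

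Your proposal instead attempts a direct minimal-cover construction: build $C=\Phi^X$ and meet each $R_e$ by the $e$-splitting/non-splitting dichotomy. The problem is that the marker framework of Section~\ref{subse:allnzdbba} is tailored to requirements with a \emph{finitary} success condition: a submarker acts once, permanently satisfies $R_e$ on a fixed $\Theta$-proportion, and is done. The ``splitting'' outcome of a Spector--Sacks requirement is not of this kind: to secure $C\leq_T\Psi_e^C$ you must build an \emph{infinite} $\Psi_e$-splitting subtree, extending it at every level, and the branch $C$ must actually lie on it. Saying the submarkers do this ``one level at a time'' does not explain how the outcome analysis (1)--(3) is supposed to go, nor how the measure budget survives infinitely many levels of splitting searches, each of which may fail on a small set that must be charged to $W$. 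Proposition~\ref{prop:psigotz} controls the $\pi$-measure of individual image strings but does not by itself bound the cumulative cost of an infinite tower of splitting searches. There is also a domain mismatch in your sketch: you place markers on strings $\sigma$ in the image of $\Theta$ while defining $C=\Phi^X$ as a function of $X$, and you invoke Lemma~\ref{le:mediwasplitbfa} for $\Psi_e$-splittings above $\sigma$, whereas the relevant splittings are for $\Psi_e$ applied to $C$, i.e.\ for $\Psi_e\circ\Phi$ with $\Phi$ still under construction. These issues are not cosmetic; they are exactly why \cite{BL2011} (and the paper) route the argument through the perfect-pointed-subtree property rather than through a direct measure-theoretic Spector construction.
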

\begin{proof}
We assume that the reader is familiar with the proof described in  \cite{BL2011} 
and describe only the 
modifications required to give the stronger result. Recall that 
$T\subseteq 2^{<\omega}$ is {\em perfect} if it 
is non-empty and, for all $\tau \in T$, 
there exist incompatible strings $\tau_0,\tau_1$ which extend $\tau$ 
and belong to $T$. Our main task in the proof of  \cite{BL2011} is to show that 
there exists 
$f\leq_T \emptyset'$ such that, for any 
$j,n\in \omega$, $f(j,n)=e$ which satisfies:
\begin{itemize}
 \item $\mu(W_e^{\emptyset'})<2^{-n}$;
  \item if $X\notin \llbracket W_e^{\emptyset'}\rrbracket$ 
  and $X$ computes $T$ which is perfect  
via $\Psi_j$, then it computes 
a perfect pointed  $T'\subseteq T$.
  \end{itemize}
  
\noindent Here $W_e^{\emptyset'}$ is the $e$th set of strings which is c.e.\ 
relative to $\emptyset'$.  In order 
to specify $W_e^{\emptyset'}$ we consider a computable construction which 
enumerates axioms for two 
functionals $\Phi$ and $\Xi$. The idea is that,  if 
$X\notin \llbracket W_e^{\emptyset'}\rrbracket$ 
and $X$ computes $T$ which 
is perfect  via $\Psi_j$, then $\Xi^X$ will be some perfect $T'\subseteq T$ and, 
for all $Y$ which are paths 
through $ T'$, $\Phi^Y=X$.  During the course of constructing $\Phi$ and $\Xi$, 
we consider various sets 
$S$ of finite strings 
$\tau$ for which $\Psi_j^{\tau}$ is of at least a certain length. 
Then we enumerate axioms 
for $\Phi$ and $\Xi$ in such a way that, for a high proportion of the strings in $S$, 
$\Xi^{\tau}$ is an 
appropriate subtree $T' \subseteq \Psi_j^{\tau}$ and, for all $\sigma\in \Xi^{\tau}$, 
$\Phi^{\sigma}$ is an 
initial segment of $\tau$ of appropriate length. During this process it may be that 
$\tau,\tau'\in S$ and $\tau$ 
is incompatible with $\tau'$ but $\Psi_j^{\tau}=\Psi_j^{\tau'}$. In this case we 
might define $\Xi^{\tau}$ and 
$\Xi^{\tau'}$ differently. The small modification required in order to give the 
stronger result is simply to remove 
this possibility. Now the idea is that  if 
$X\notin \llbracket W_e^{\emptyset'}\rrbracket$ and $X$ 
computes $T$ which is perfect  
via $\Psi_j$, then $\Xi^T$ will be some perfect $T'\subseteq T$ and, for all $Y$ 
which are paths through 
$T'$, $\Phi^Y=T$. Now when $\Psi_j^{\tau}=\Psi_j^{\tau'}$, it is this single value 
which we must consider as 
the oracle input for $\Xi$, rather than the two values $\tau$ and $\tau'$ as 
previously. There is no longer the 
possibility of mapping to two distinct values.  This does not cause any problems, 
because now we are only 
required to ensure that, if $X$ doesn't have any initial segment in 
$W_e^{\emptyset'}$ and $\Psi_j^X=T$ is 
perfect, then for all $Y$ which are paths through $\Xi^{T}$, $\Phi^{Y}=T$, i.e. it 
only the value $T$ that $Y$ 
must compute rather than the various $X$ such that $\Psi_j^X=T$, so there is no 
need to map to two distinct 
values anyway. 
\end{proof}

\begin{coro}
Every 2-random degree forms a minimal pair with every 2-generic degree. 
\end{coro}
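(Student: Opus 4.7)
The plan is to derive a contradiction from the assumption that a 2-random degree $\mathbf{a}$ and a 2-generic degree $\mathbf{b}$ share a non-zero common predecessor $\mathbf{c}$. Applying the theorem just proved to $\mathbf{c}\leq\mathbf{a}$ produces a strong minimal cover of $\mathbf{c}$, and so $\mathbf{c}$ fails the cupping property.

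The complementary ingredient I would use is the category-side mirror of the theorem: every non-zero degree bounded by a 2-generic has the cupping property. This is a natural instance of the heuristic principle of Section \ref{subse:motiv}, upgrading the theorem of Jockusch \cite{Jockusch:80} that every 2-generic itself has the cupping property to its lower cone. Granting this, $\mathbf{0}<\mathbf{c}\leq\mathbf{b}$ with $\mathbf{b}$ 2-generic would force $\mathbf{c}$ to have the cupping property, contradicting the first paragraph. Hence no such $\mathbf{c}$ exists and $\mathbf{a}\wedge\mathbf{b}=\mathbf{0}$.

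The hard part is proving the downward transfer of the cupping property, which is not made explicit in the excerpt. I would adapt the marker framework of Section \ref{subse:allnzdbba} to the category setting. Given a reduction $\Theta$ from a 2-generic $B$ to an incomputable real $C$, and a candidate strict upper bound $D>_T C$, the requirements would be indexed by potential complements $E<_T D$ witnessing the cupping of $\deg(C)$ to $\deg(D)$; each atomic strategy would exploit the 1-genericity of $B$ relative to $\emptyset'$ to meet a dense $\emptyset'$-c.e.\ set of strings that forces its requirement along the image of $\Theta$. The splitting and density tools of Sections \ref{subse:splitfu} and \ref{subse:ddmeth} have direct category analogues, with measure of open sets replaced by Baire density along $\Theta$, and these should carry the argument through in the usual way: outcomes (2) and (3) of the methodology get bounded by a comeager-style covering whose complement is itself comeager, leaving outcome (1) where the cupping requirement is satisfied.
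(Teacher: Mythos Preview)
Your high-level strategy matches the paper's exactly: show that any non-zero $\mathbf{c}$ bounded by a 2-generic has the cupping property, while any non-zero $\mathbf{c}$ bounded by a 2-random fails it (by the theorem just proved), so no such common $\mathbf{c}$ exists.

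Where you diverge is in the proof of the category-side lemma. You flag this as the hard part and propose porting the measure-theoretic marker machinery of Section~\ref{subse:allnzdbba} to a Baire-category setting. The paper instead dispatches this in one line using two classical facts already cited earlier in the text: Martin~\cite{Martin:60} proved that if $\mathbf{a}$ is $n$-generic and $\mathbf{0}<\mathbf{b}\leq\mathbf{a}$ then $\mathbf{b}$ bounds an $n$-generic; combined with Jockusch's result that every 2-generic has the cupping property and the trivial observation that cupping is upward closed, this immediately gives that every non-zero degree below a 2-generic has the cupping property. No new construction is needed.

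Your proposed construction is not only unnecessary but, as sketched, does not quite make sense. To establish cupping for $\mathbf{c}$ you must, for \emph{each} $\mathbf{d}>\mathbf{c}$, \emph{build} some $\mathbf{e}<\mathbf{d}$ with $\mathbf{c}\vee\mathbf{e}=\mathbf{d}$; indexing requirements by ``potential complements $E<_T D$'' inverts the quantifier structure. Moreover the machinery of Section~\ref{subse:allnzdbba} is genuinely measure-theoretic (proportions of $\Theta$-preimages, Lebesgue density via Lemma~\ref{le:extled}), and translating it to category would require real work, not just replacing ``measure'' by ``density'' in the outcome analysis. Fortunately none of this is needed.
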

\begin{proof}
As mentioned previously, Jockusch showed that all 2-generics satisfy the 
cupping property. Martin 
\cite{Martin:60} showed that, if $\bf{a}$ is n-generic and 
$\boldsymbol{0}<\boldsymbol{b}<\boldsymbol{a}$ 
then $\boldsymbol{b}$ bounds an  $n$-generic. Since the degrees which satisfy 
the cupping property are 
upward closed, it follows that all non-zero degrees below a 2-generic satisfy the 
cupping property and are 
therefore not bounded by a 2-random. 
\end{proof}

%However in this particular case this strengthening may be obtained
%more easily as a corollary of the proof of the original result in \cite{BL2011}
%as follows.
%In this paper we said that
%a set of strings $T$ is called a \emph{perfect tree} if every $\sigma \in T$ 
%has at least two incompatible extensions in $T$.
%Moreover, $T$ is called \emph{pointed} if all paths through 
%$T$ compute $T$. 
%We called a set 
%$A$ a {\em tree basis} if, 
%whenever it computes a perfect tree $T$, 
%it computes a perfect pointed subtree $T' \subseteq T$.
%As we remarked there, it is easy to see that
%if a set is a tree basis then it has a strong minimal cover.
%In fact, the standard way to produce degrees with a strong minimal cover
%is to make them tree bases.
%
%In the proof in \cite[Section 3]{BL2011}
%we show that {\em if a 2-random degree $\mathbf{a}$ computes
%a 2-branching tree $T$, then $\mathbf{a}$  
%computes a perfect subtree of $T$ such that every path through
%$T$ computes $A$.}
%This (which is apparently stronger than showing that $\mathbf{a}$
%is a tree basis) is evident in \cite[Sections 3.2, 3.3 and 3.6]{BL2011}
%where the relevant reductions are discussed.
%As a consequence, if $\mathbf{a}$ is 2-random and 
%$\mathbf{b}<\mathbf{a}$ then $\mathbf{b}$ is a tree basis.
 
\section{The join property}\label{sec:joinprop}
A degree $\mathbf{a}$ satisfies the 
join property if for every non-zero degree
$\mathbf{b}< \mathbf{a}$ there exists $\mathbf{c}<\mathbf{a}$
such that $\mathbf{b}\vee \mathbf{c}=\mathbf{a}$. The strongest positive result 
here \cite{DGLM11} is that all 
non-GL$_2$ degrees satisfy the join property. The degrees which satisfy the join 
property, however, are not 
upward closed, and it remains open as to whether $\boldsymbol{0}'$ can be 
defined as the least degree 
such that all degrees above satisfy the join property. 

\subsection{Category}
The degrees which satisfy the join property
form a comeager class. Indeed, Jockusch \cite[Section 6]{Jockusch:80}
showed that every $2$-generic degree satisfies the join property.
He also showed that every degree that is bounded by a 2-generic degree
satisfies the join property.
In this section we show that every 1-generic degree has the join property.
The coding that we employ is based on the classic and elegant
method that was used in \cite{DBLP:journals/jsyml/PosnerR81}
for the proof that $\mathbf{0}'$ has the join property.
\begin{thm}\label{th:1genjoin}
Every 1-generic degree satisfies the join property.
\end{thm}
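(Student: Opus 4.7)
\emph{Plan.} Let $A$ be $1$-generic and let $\mathbf{b}$ be a non-zero degree strictly below $\deg(A)$. Fix $B\in\mathbf{b}$ with $B\leq_T A$ and a Turing functional $\Theta$ with $\Theta^A=B$; by Lemma \ref{coro:replfunat} we may assume $\Theta$ is special. The plan is to build a computable Turing functional $\Gamma$ (depending only on $\Theta$) and set $C:=\Gamma^A$, arranging that $A\leq_T B\oplus C$ (coding) and $A\not\leq_T C$ (diagonalization). Combined with the automatic $C\leq_T A$ and with $B\leq_T A$, this yields $B\oplus C\equiv_T A$ and $C<_T A$, witnessing the join property for $\deg(A)$ above $\mathbf{b}$.

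The functional $\Gamma$ will be defined by a Posner--Robinson style coding, following \cite{DBLP:journals/jsyml/PosnerR81}. I will partition the positions of $C$ into consecutive blocks $J_0<J_1<\cdots$ of sufficiently fast-growing sizes. On oracle $\tau$ for which $\Theta^\tau$ is sufficiently convergent, $\Gamma^\tau$ fills block $J_n$ so that the bit $\tau(n)$ is encoded at a position within $J_n$ computable from a prefix of $\Theta^\tau$, while the remaining bits of $J_n$ are written in a way that, from inspection of $J_n$ alone, the value of $\tau(n)$ is not determined. Given $B$ and $C$, the decoding position in $J_n$ is obtained from $B$ and $A(n)$ is read off $C$, securing $A\leq_T B\oplus C$.

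The diagonalization $A\not\leq_T C$ will rely on $1$-genericity of $A$. For each index $e$, consider the c.e.\ set
\[
W_e\ =\ \bigl\{\tau\in 2^{<\omega}\ :\ \exists n<|\tau|,\ \Psi_e^{\Gamma^\tau}(n){\downarrow}\ \text{and}\ \Psi_e^{\Gamma^\tau}(n)\neq\tau(n)\bigr\}.
\]
By $1$-genericity either $A$ has an initial segment in $W_e$, in which case $\Psi_e^C\neq A$ is already witnessed, or there exists $\sigma\subset A$ with no extension in $W_e$. The main task is to rule out the second alternative, which says that $\Psi_e^{\Gamma^\tau}$ correctly reproduces every bit of $\tau$ at which it converges, for every $\tau\supseteq\sigma$. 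By varying $\tau$ over extensions of $\sigma$ and observing the oracle use of $\Psi_e$ on the various $\Gamma^\tau$, one should be able to identify for each $n$ the position in $J_n$ that carries the coding of $\tau(n)$; since that position is determined by a prefix of $\Theta^\tau$, this would give an effective procedure, uniform in $\sigma$, for computing arbitrarily long prefixes of $B$, contradicting incomputability of $B$. The main obstacle will be to design the block sizes and placement rule so that this extraction step is uniform, and to make precise the sense in which the remaining bits of $J_n$ conceal $\tau(n)$; here I expect to appeal to Proposition \ref{prop:psigotz} applied to the special $\Theta$, which ensures that prefixes of $\Theta^\tau$ vary sufficiently as $\tau$ ranges over extensions of $\sigma$ for the extraction to succeed.
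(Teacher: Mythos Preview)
Your proposal has two concrete gaps.

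First, the appeal to Lemma \ref{coro:replfunat} is illegitimate here. That lemma produces a special functional agreeing with the original one only on \emph{2-random} oracles inside a $\Pi^0_1$ class of 1-randoms; a 1-generic set is never 1-random, so you cannot invoke it to replace $\Theta$ by a special functional while preserving $\Theta^A=B$. (The same applies to Proposition \ref{prop:psigotz}, which is stated for special functionals.) This is not fatal by itself, but it removes the crutch you were planning to lean on for the extraction step.

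Second, and more seriously, the diagonalization sketch does not go through. Suppose $\sigma\subset A$ has no extension in $W_e$. You conclude that for every $\tau\supseteq\sigma$ and every $n$ with $\Psi_e^{\Gamma^\tau}(n){\downarrow}$, the output agrees with $\tau(n)$. But nothing forces $\Psi_e^{\Gamma^\tau}(n)$ to converge for the particular $\tau$ and $n$ you need in your ``vary $\tau$, observe the use, locate the coding position'' procedure; $\Psi_e$ may simply diverge on all the perturbed oracles you try. So you cannot effectively identify the coding position in $J_n$, and the promised computation of $B$ never materializes. A single fixed computable $\Gamma$ with a block-coding scheme of this kind is exactly the ``initial attempt'' that does not suffice for 1-generics; the paper's construction is dynamic for this reason.

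What the paper actually does is rather different. It first reduces to the case that $B$ is not c.e., and builds $C=C_A$ by a stage-by-stage construction that, at stage $s+1\in\omega^{[i]}$, searches for a $\Psi$-splitting above the current initial segment of $A$ and then chooses the next coding number $n_{A,s+1}$ according to a case split comparing $B(n)$ with the (non-)existence of $\Psi_i$-splittings above the current $C$-string. The fact that $B$ is not c.e.\ is precisely what guarantees this search terminates. Decoding $A$ from $B\oplus C$ then works because $B$ tells you which case applied. The proof that $\Psi_i^C\ne A$ uses 1-genericity twice, with two different c.e.\ sets $S$ and $V$ of strings manufactured from the construction; the argument is that if $A$ avoids $S$ and we are not already done, then every initial segment of $A$ has an extension in $S$, contradicting 1-genericity. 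Your block-coding plan does not have an analogue of this two-step genericity argument, and that is where the real work lies.
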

\begin{proof}
We suppose we are given $A$ which is 1-generic and also an incomputable set  
$B<_T A$. We may 
suppose that $B$ is not c.e., since anyway $B\oplus \bar B$ 
is not c.e.\ when $B$ is incomputable,  and is of 
the same degree as $B$. We wish to construct $C<_T A$ such that 
$A\leq_T B\oplus C$. In order to do this 
we suppose given an arbitrary set $X$ and we build $C_X$. For some $X$ we 
will have that $C_X$ is a 
partial function, but  $C_A$ will be total and will be the required joining partner for 
$B$. 

Let $\Psi$ be such that $\Psi^A=B$, and assume that this functional satisfies all 
of the conventions satisfied 
by any $\Psi_j$ as specified in Section \ref{techback}.  We also assume that, for 
any $\rho$ and any $n$, if $
\Psi^{\rho}(n)\downarrow$ then $\Psi^{\rho}(n)\in \{ 0,1 \}$. Let 
$\sigma_m=0^m1$. 
Let $\psi(X;n)$ be the use of the computation $\Psi^X(n)$ (so that if 
$\Psi^{X}(n)\uparrow$ then $\psi(X;n)\uparrow$). 
 We define a function $f_X$, which may be partial. 
For any $n$, if $\psi(X;n)\uparrow$ then let 
$f_X(n)$ be undefined, and otherwise let $\rho$ be the initial segment of $X$ of 
length $\psi(X;n)$. If there 
exists $m$ such that 
$\rho \ast \sigma_m\subset X$ then let $f_X(n)=|\rho \ast \sigma_m|$.

We consider given some fixed effective splitting search procedure which 
enumerates all unordered pairs 
$\{  \rho_0,\rho_1 \}$ such that $\rho_0$ and $\rho_1$ are $\Psi$-splitting but 
there does not exist any $
\rho_2$ such that either $\rho_2\subset \rho_0$ and $\rho_2$ and 
$\rho_1$ are $\Psi$-splitting, or  
$\rho_2\subset \rho_1$ and $\rho_2$ and 
$\rho_0$ are $\Psi$-splitting. So the 
procedure enumerates all 
pairs of strings which are $\Psi$-splitting and such that neither string can be 
replaced by a proper initial 
segment to form a new splitting. 
In order to define $C_X$, we define a sequence of strings 
$\{ \tau_{X,s} \}_{s\geq 0}$ so that $C_X=\bigcup_s \tau_{X,s}$.  
As we define the sequence  $\{ \tau_{X,s} \}_{s\geq 0}$ 
we also define  sequences  $\{ n_{X,s} \}_{s\geq 1}$ 
and $\{\rho_{X,s} \}_{s\geq 0}$. 
The sequence $\{ n_{X,s} \}_{s\geq 1}$ just keeps 
track of which bit of $\Psi^X$ we make use of at each stage of the construction. 
The sequence 
$\{\rho_{X,s} \}_{s\geq 0}$ 
records the initial segment of $X$ used by the end of stage $s$. 
This means that for all $Y
\supset \rho_{X,s}$ the construction will run in an identical way up to the end of 
stage $s$. 

The construction is required to be a little more subtle than it might initially seem. 

\ \paragraph{{\bf Construction}}
Stage 0. Define $\tau_{X,0}=\rho_{X,0}=\emptyset$. 

\noindent Stage $s+1\in \omega^{[i]}$. Search until a first pair 
$\{ \rho_0, \rho_1 \}$  is enumerated by the 
splitting search procedure such that both of $\rho_0$ and $\rho_1$ 
extend $\rho_{X,s}$  and one of these 
strings, $\rho_0$ say, is an initial segment of $X$. \footnote{It may be the case 
that no such pair is 
enumerated, in which case the construction simply continues this search for ever 
and $\tau_{X,s+1}$ 
remains undefined.}

For use in the verification it is also useful to enumerate a certain set $V_{X,i}$. 
Let  $\{ \rho_2, \rho_3 \}$  be 
the first pair enumerated by the splitting 
search procedure such that both of $\rho_2$ 
and $\rho_3$ extend $\rho_{X,s}$. Let $n_0$ be the least such that 
$\Psi^{\rho_2}(n_0)\downarrow \neq \Psi^{\rho_3}(n_0)\downarrow$, 
let $d\in \{ 2,3 \}$ be such that $\Psi^{\rho_d}(n_0)=1$ and 
enumerate $\rho_d$ into  $V_{X,i}$. 

Now we pay attention again to the pair $\{ \rho_0, \rho_1 \}$.  Let $n_1$ be the 
least such that $\Psi^{\rho_0}
(n_1)\downarrow \neq \Psi^{\rho_1}(n_1)\downarrow$. 
The remaining  instructions for the stage are divided into steps $t\geq 0$. 

Step $t$. Check to see whether there exists a least $n$ with 
$n_1 \leq n \leq n_1+t$ such that either: 
\footnote{Note that when we write ``$\Psi^X(n)$" in case (a) and case (b)  this 
denotes its final value; if $
\Psi^X(n)\uparrow $ or $f_X(n)\uparrow$ for any $n$ with  
$n_1 \leq n \leq n_1+t$ then the construction with 
respect to $X$ does not terminate at stage $s+1$ and we perform no further 
instructions.}

\begin{enumerate}
\item[(a)] $\Psi^X(n)=1$ and there does not exist any 
$\Psi_i$-splitting above $\tau_{X,s} \ast \sigma_n$ with 
the strings of length $\leq f_X(n)$, or;
\item[(b)] $\Psi^X(n)=0$ and there  
does exist a $\Psi_i$-splitting above 
$\tau_{X,s} \ast \sigma_n$ with the 
strings of length $\leq f_X(n)$.

\end{enumerate}

\noindent   If there exists no such $n$, 
then proceed to step $t+1$, otherwise let 
$n$ be the least such and 
define $n_{X,s+1}=n$.  If case (a)  applies 
for $n$, then  define $\rho_{X,s+1}$ to 
be the initial segment of $X
$ of length $f_X(n_1+t)$ and define 
$\tau_{X,s+1}=\tau_{X,s} \ast \sigma_n \ast X(s)$.  If case (b) applies for 
$n$, then let $\tau$ and $\tau'$ be the first $\Psi_i$-splitting above 
$\tau_{X,s} \ast \sigma_n$ found by some 
fixed computable search procedure. Let $n_2$ be 
the least such that 
$\Psi_i^{\tau}(n_2)\downarrow \neq \Psi_i^{\tau'}(n_2)\downarrow$ 
and let $\tau''\in \{ \tau,\tau' \}$ be such that $\Psi_i^{\tau''}(n_2)\neq X(n_2)$.  
Let $m=f_X(n_1+t)$ and define 
$\rho_{X,s+1}$ to be the initial segment of $X$ of length $m$ (note that 
$m\geq n_2$).  Define $\tau_{X,s+1}=\tau'' \ast X(s)$.  
For future reference, when case (b) occurs we also 
enumerate $\rho_{X,s+1}$ into the 
set $S_{X,i}$. This records that we have managed to 
directly diagonalize for $\Psi_i$ at this stage. Whether case (a) or 
case (b) applies, proceed 
to stage $s+2$.

\ \paragraph{{\bf Verification}} Since 
$\Psi^A$ is total and there exist infinitely 
many $n$ such that $A(n)=1$, 
it follows that $f_A$ is total. Also, since 
$\Psi^A$ is total and incomputable, for 
every initial segment $\rho$  
of $A$ there exists a pair $\{ \rho_0,\rho_1 \}$ 
enumerated by the splitting search 
procedure such that both of 
these strings extend $\rho$ and one of them 
is an initial segment of $A$. In order 
to show that $C_A$ is total, 
it therefore suffices to show that when the 
construction is run for $X=A$ there are 
only finitely many steps $t$ 
run at each stage of the construction. 
So suppose otherwise, and let $s$ be the 
least such that are an infinite 
number of steps run at stage $s+1$ of 
the construction. Let $n_1$ be as defined 
in the instructions for that 
stage. Then, for all $n\geq n_1$, if $n\in B$ then  
 there   does exist a $\Psi_i$-splitting 
 above $\tau_{X,s} \ast \sigma_n$, and if 
 $n\notin B$ then there does 
not exist a $\Psi_i$-splitting above 
$\tau_{X,s} \ast \sigma_n$. This means that 
$B$ is c.e., contrary to 
assumption. 

 Having established that $C=C_A$ is total, 
 we wish to show next that 
 $B\oplus C$ can compute the 
sequence $\{ \tau_{A,s} \}_{s\geq 0}$, 
and that therefore $A\leq_T B\oplus C$.  
Suppose inductively that 
$B\oplus C$ has already been able to 
decide $\tau_{A,s}$. Then there exists a 
unique $n$ such that $
\tau_{A,s} \ast \sigma_n \subset C$. 
This value of $n$ is $n_{A,s+1}$. By 
checking whether $n\in B$ or not, 
$B\oplus C$ can now decide whether 
case (a) or case (b) applied for $n$ at the 
step when $\tau_{A,s+1}$ 
was defined, and this is sufficient 
information to be able to determine 
$\tau_{A,s+1}$.  
 
  We are therefore left to prove that 
  $C<_T A$. Fix $i\in \omega$.  
  Let $S=\bigcup_X  S_{X,i}$. If there is 
some initial segment of $A$ in $S$ then it 
is clear that $A\neq \Psi_i^{C}$,  so 
suppose otherwise. Next 
suppose there exists a stage $s+1$ such that:

\begin{enumerate}
\item  $s+1\in \omega^{[i]}$;
\item  For the step $t$ at which stage 
$s+1$ terminates, case (a) applies for 
$n_{A,s+1}$.
\item  Putting $n=n_{A,s+1}$, there does not exist any 
$\Psi_i$-splitting above $\tau_{A,s} \ast \sigma_n$. 
\end{enumerate}

\noindent In this case it is clear that 
$\Psi_i^{C}$ is either partial or computable, 
so $A\neq \Psi_i^{C}$.

Finally, suppose that neither of these 
two cases occur.  This means that as we 
run the construction for $X=A
$, for every $s+1\in \omega^{[i]}$  and for $n=n_{A,s+1}$, 
case (a) applies for $n$ at the step of stage $s+1$ 
at which we define $\tau_{A,s+1}$, 
but \emph{actually} there does exist 
some $\Psi_i$-splitting above $\tau_{A,s}\ast \sigma_n$. 
Now we look to derive a contradiction, by showing that 
for each $\rho \subset A$ 
there are strings in $S$ extending $\rho$. 

Let $V= \bigcup_X V_{X,i}$. 
Since $A$ is 1-generic and $V$  is c.e.\ and all initial 
segments of $A$ have 
extensions in $V$, it follows 
that there are infinitely many strings in $V$ which 
are initial segments of $A$. 
Now we have to establish exactly what this means. 
Suppose $\rho\in V$ and $\rho \subset A$. Then there 
exists some $X$ such that $\rho$ is 
enumerated into $V_{X,i}$ during stage $s+1$ 
of the construction for $X$. 
Since $\rho \subset A$ it must be 
the case that $\rho_{X,s}\subset A$. This 
means that, up until the end of 
stage $s$ the constructions for $X$ and $A$ are 
identical and $\rho_{X,s}=\rho_{A,s}$.  Therefore $\rho$ is 
also enumerated into $V_{A,i}$ at stage 
$s+1$ of the construction for $A$ and 
the pairs $\{ \rho_0, \rho_1\}$ 
and $\{ \rho_2,\rho_3 \}$ as specified in 
the instructions for that stage are 
identical. Without loss of generality, 
suppose that $\rho_0=\rho_2 \subset A$ 
and let $n_0=n_1$ be as defined in the 
instructions of the 
construction for $A$ at that stage. 
Then $\Psi^{\rho_0}(n_0)=1$. There are now 
two possibilities to consider.

 First, suppose that $n_{A,s+1}=n_0$.  
 Then case (a) applies for $n_0$ at step 0 
when we define $\tau_{A,s
+1}$ but actually there does exist a $\Psi_i$-splitting  above 
$\tau_{A,s}\ast \sigma_{n_0}$. Let $r$ be 
greater than the length of the 
strings in the first such splitting. Then 
$\rho_1 \ast \sigma_r$ is a string in $S$ 
extending $\rho_{A,s}$. This follows 
because, when we run the construction for 
any $Y\supset \rho_1 \ast 
\sigma_r$, it will be identical to the construction 
for $A$ up until the end of stage 
$s$. Then $\{\rho_0,\rho_1 
\}$ will be the first pair enumerated by the 
splitting search procedure such that 
both of $\rho_0$ and $
\rho_1$ extend $\rho_{Y,s}$  and one 
of these strings is an initial segment of
 $Y$. Now {\em here} is the 
crucial point: at step $t=0$ in stage $s+1$ 
of the construction for $Y$ we find that 
$\Psi^Y(n_0)\downarrow =0$ 
and that there does exist a $\Psi_i$-splitting above 
$\tau_{Y,s}\ast \sigma_{n_0}$ with the strings of 
length less than $f_{Y}(n_0)$. 
 
 Next suppose that $n_{A,s+1}\neq n_0$. 
 Since $\Psi^A(n_0)=1$ this means that 
there does exist a $\Psi_i$-splitting 
above $\tau_{A,s}\ast \sigma_{n_0}$. 
Once again, choosing $r$ 
sufficiently large it follows that  $\rho_1 \ast \sigma_r$ 
is a string in $S$ extending $\rho_{A,s}$.
 
 We have shown that every initial segment of 
 $A$ has extensions in $S$. Since 
$S$ is a c.e.\ set, and $A$ is 
1-generic but does not have any initial segment 
in $S$, this gives the required 
contradiction. 
\end{proof}

\subsection{Measure}
The degrees which satisfy the join property
form a class of measure 1. Indeed, we show the following.

\begin{thm} \label{jointheo}
Every 2-random degree satisfies the join property.
\end{thm}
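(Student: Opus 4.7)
The plan is to combine the Posner--Robinson style coding employed in the proof of Theorem~\ref{th:1genjoin} with the measure-theoretic framework of Sections~\ref{sec:method} and~\ref{subse:allnzdbba}. Let $A$ be 2-random of degree $\mathbf{a}$ and let $B<_T A$ be incomputable; as in Theorem~\ref{th:1genjoin}, by replacing $B$ with $B\oplus\bar B$ if necessary I may assume that $B$ is not c.e. Fix a Turing functional $\Theta$ with $\Theta^A=B$, and by Lemma~\ref{coro:replfunat} arrange that $\Theta$ is special. The goal is to produce $C<_T A$ with $B\oplus C\equiv_T A$. For each $k\in\omega$ I will describe a computable construction producing functionals $\Xi$ and $\Phi$, together with a $\emptyset'$-c.e.\ set of strings $W_k$ of measure $<2^{-k}$, such that whenever $X\notin \llbracket W_k\rrbracket$ and $\Theta^X$ is total and incomputable: $C_X:=\Xi^X$ is total, $\Phi^{\Theta^X\oplus C_X}=X$, and $\Psi_i^{C_X}\neq X$ for every $i$. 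Since $A$ is 2-random it avoids $\llbracket W_k\rrbracket$ for $k$ sufficiently large (the family $\{W_k\}_k$ forms a $\emptyset'$-c.e.\ Martin-L\"of test, which $A$ passes), and $\Theta^A=B$ is total and incomputable, so $C:=C_A$ will be the required joining partner.

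The construction follows the methodology of Section~\ref{subse:allnzdbba}: the negative requirements $R_i:\Psi_i^{C_X}\neq X$ are handled by $i$-markers placed on initial segments $\sigma$ of $Y=\Theta^X$, together with submarkers on extensions of $\sigma$ of length $m_\sigma$ and the approximations $\pi^\ast(\sigma)$ to $\pi(\Theta,\sigma)$ that govern when a marker becomes active. The atomic strategy for an $i$-marker on $\sigma$ uses Lemma~\ref{le:mediwasplitbfa} to produce a $\Theta$-splitting $(U,V)$ above $\sigma$ with $\pi(\Theta,U)$ and $\pi(\Theta,V)$ at least a fixed proportion of $\pi^\ast(\sigma)$. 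Let $n$ be the least position at which $\Theta^\tau$ and $\Theta^{\tau'}$ disagree for $\tau\in U$ and $\tau'\in V$; then $B(n)=\Theta^X(n)$ identifies which side of the splitting contains $X$. On the coding side (say $B(n)=1$) the strategy extends $C_X$ by a Posner--Robinson block $\sigma_n\ast X(s)$ with $\sigma_n=0^n1$, which simultaneously drives the definition of $\Phi$: from $B\oplus C_X$ one locates the block, reads off $n$, and hence recovers $X(s)$. On the other side ($B(n)=0$) the strategy searches, again via Lemma~\ref{le:mediwasplitbfa}, for a $\Psi_i$-splitting above the current value of $C_X$ whose two parts each have positive $\Theta$-measure, and picks a side disagreeing with $X$ at the relevant bit, as in case~(b) of the proof of Theorem~\ref{th:1genjoin}. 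All marker rules (i)--(iii) of Section~\ref{subse:asrdsdf} and the bookkeeping conditions \eqref{eq:aactivcon}--\eqref{eq:condb} of Section~\ref{subse:allnzdbba} are maintained as described there.

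The verification proceeds along the standard lines of the framework. Outcome~(3) failures, where some permanent marker never finds a suitable $\Theta$-splitting, are covered as in Section~\ref{subse:allnzdbba}: the hosting strings form a prefix-free family on which Proposition~\ref{prop:psigotz} and Lemma~\ref{le:cconran} yield a $\emptyset'$-c.e.\ open cover of measure $<2^{-k-1}$. Outcome~(2) failures, where infinitely many permanent $i$-markers (for some fixed $i$) sit on initial segments of $\Theta^X$, form a null $\Sigma^0_3$ class by the $\Theta$-density Lemma~\ref{le:extled}: once active, each such marker permanently satisfies $R_i$ on a fixed $\Theta$-proportion of its cone, contradicting positive limit density. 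This yields a second $\emptyset'$-c.e.\ cover of measure $<2^{-k-1}$, and $W_k$ is their union. I expect the main obstacle to be the simultaneous coordination of the two appeals to Lemma~\ref{le:mediwasplitbfa}---one on the coding side to drive the $\sigma_n$-block structure via $\Theta$, the other on the diagonalisation side producing a $\Psi_i$-splitting---while maintaining the approximation bookkeeping \eqref{eq:aactivcon}--\eqref{eq:condb} and ensuring that each block $\sigma_n\ast X(s)$ is long enough for the $\Psi_i$-splitting search to succeed yet short enough that the induced reduction $\Phi^{B\oplus C}=X$ remains well defined along $A$.
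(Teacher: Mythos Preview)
Your route departs from the paper's in two ways, and the second introduces a genuine gap.

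The paper does not attempt direct diagonalisation $\Psi_i^{C}\neq A$ at all. It observes that, since $A$ is 1-random, it suffices to make $C=\Phi^A$ 1-generic: 1-generic degrees do not bound 1-random degrees, so $A\not\leq_T C$ follows for free. The negative requirements thus collapse to the genericity requirements of Theorem~\ref{th:kur2ranb1gen}, and the atomic strategy becomes ``meet or avoid $W_e$'' together with a single $\Psi$-splitting $(V,V')$ above $\tau\subset X$ (via Lemma~\ref{le:measplitbfa}, not Lemma~\ref{le:mediwasplitbfa}) which simultaneously drives the coding $\Xi(\Psi^\rho,\Phi^\rho)=\rho$. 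There is no Posner--Robinson block structure and no case analysis on $B(n)$; the whole construction lives in the Section~\ref{subse:asrdsdf} framework with markers on initial segments of $X$.

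More seriously, you invoke the machinery of Section~\ref{subse:allnzdbba}, placing markers on $\sigma\subset Y=\Theta^X$ and tracking $\pi(\Theta,\sigma)$. That framework is designed to build objects depending only on $Y$ --- it is what the paper uses for Theorem~\ref{th:2randowndenjoinp}, where the joining partner is below $B$ --- but here $C=\Xi^X$ must genuinely depend on $X$, or else $C\leq_T B$ and $B\oplus C\not\geq_T A$. A marker on $\sigma$ sees many $X$ with $\Theta^X\supseteq\sigma$, each requiring its own block $\sigma_n\ast X(s)$ and its own side of a $\Psi_i$-splitting (the bit of $X$ to disagree with varies over these $X$); there is no single current value of $C_X$ for the marker to split above. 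Your invocation of Lemma~\ref{le:mediwasplitbfa} is symptomatic of the mismatch: that lemma needs two functionals in series, but your setup has only $\Theta$. The paper's 1-generic trick is precisely what dissolves this coordination problem, since meeting or avoiding a fixed c.e.\ set $W_e$ does not depend on which $X$ one is handling.
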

\begin{proof}
Suppose that $A$ is 2-random and
$\Psi^A=B$ for some incomputable set $B$
and a Turing functional $\Psi$.
We will exhibit a set $C<_T A$ such that 
$C\oplus B\equiv_T A$.
By Lemma \ref{coro:replfunat}, we may assume that
$\Psi$ is special.
In order to establish the existence of such 
a set $C$
it suffices to define
a computable procedure
which takes  a number $k\in \omega $ as input 
and returns (indices of) a $\emptyset'$-c.e.\ 
set of strings
$W$ with $\mu(W)<2^{-k}$ and a Turing 
functional $\Phi$ such that the following is satisfied for all sets $X$ which do not 
have a prefix in $W$: 
\begin{equation}\label{eq:basicreqfjoib}
\textrm{$\Psi^X$ is total}\Rightarrow  
(\textrm{$\Phi^X$ is total $\mathbf{\wedge}$ $X\leq_T \Psi^X \oplus \Phi^X$ 
$\mathbf{\wedge}$ 
$ X\not \leq_T \Phi^X$)}.
\end{equation}
 Since $A$ is 2-random there will be
 some $k\in\omega$ such that $A$ does not have a prefix in the
 set $W$ produced by the computable procedure with 
 input $k$. If we let $C=\Phi^A$ for the functional
 $\Phi$ that is produced by the procedure with input $k$, then $C$
 has the desired properties.

 Let us fix $k\in\omega$.
 The procedure on input $k$ will also produce the reduction $\Xi$ which 
establishes 
 $X\leq_T \Psi^X \oplus \Phi^X$ in (\ref{eq:basicreqfjoib}).  For ease of notation 
we let the oracle inputs for
$\Xi$ appear as arguments and not as superscripts.

Since 1-generic degrees do not bound 1-random degrees,
in order to ensure that $ A\not \leq_T \Phi^A$ it suffices to ensure that $\Phi^A$ is 
1-generic. 
We therefore look to satisfy the following 
requirements for all $X$ that do not have a prefix in
$W$, where $\{ W_e \}_{e\in \omega}$ is  an effective enumeration of all
upward closed c.e.\ sets of strings:
\[
R_e:\  \Psi^X \ \mbox{is total}\ \Rightarrow 
  \ \exists n [( \Phi^X\restr_n\in W_e)\ 
 \vee\ \forall \sigma\in W_e ( \Phi^X\restr_n \not\subseteq \sigma)]. 
\]

The construction fits the general description of
Section \ref{subse:asrdsdf}.
The purpose of an $e$-marker that is placed
on a string $\tau$ is
to enumerate axioms for  $\Phi$ and $\Xi$, and to ensure that   
 $R_e$ is satisfied for a fixed proportion of
the extensions $X$ of $\tau$. 
We describe only roughly how the marker operates 
now, the precise 
instructions will deviate just slightly 
from this rough description.  

 The marker begins by  searching for
a $\Psi$-splitting $(V,V')$ above $\tau$,  of $\tau$-measure 
$2^{-e}$.
Until such a splitting is found the marker is 
{\em inactive}. If and when the 
splitting is found, the marker 
becomes {\em active}. Upon finding the 
splitting the marker discards some 
strings from $V$ and $V'$, so that 
$V'$ is still of $\tau$-measure at least $2^{-(e+2)}$ and so that  
$\mu(V)/\mu(V\cup V')=2^{-e}$ (this may 
involve extending the length of the strings as necessary). Once active, the 
marker enumerates axioms for  $
\Phi$ and $\Xi$ on the strings in $V'$ 
and restrains the placement of markers on 
extensions of the strings in $V$.
Finally, if and when an
extension $\sigma$ of $\Phi^{\tau}$ appears in $W_e$,
it defines $\Phi^{\rho}$ to be an extension of $\sigma$ 
for all strings $\rho\in V$ and  lifts the 
restraint on the placement of markers on 
strings extending those in $V
$. In this event we say that the marker has  {\em acted}.

According to Lemma 
\ref{le:measplitbfa}, if the marker remains 
inactive then its actions may cause 
$\Phi^X$ to be partial although 
$\Psi^X$ is not partial,  for  $\tau$-measure at most 
$2^{-(e-1)}$.  Once the marker becomes 
active, it may cause $\Phi^X$ to be 
partial for those $X$ extending 
strings in $V$, but this is only $2^{-e}$ 
of the total proportion of strings in 
$V\cup V'$. Once active,  the 
marker ensures $R_e$ is satisfied for at least  a fixed proportion 
of the reals extending $\tau$, where this proportion  depends solely on $e$.

\ \paragraph{{\bf Construction of $\Phi$ and $\Xi$}}
At stage $0$ place a $k+4$-marker on the empty
string.

 At stage $s+1\in 2\omega^{[e]}+1$, if  
 $e>k+3$ then perform the following 
instructions, otherwise go to the 
next stage.  Order the strings on which 
$e$-markers sit, first by length and then 
from left to right. For each 
such $\tau$ and its marker in turn, 
perform the following instructions for the first 
of cases (a) and (b) which 
applies (or if neither case applies then do nothing).  
 
\begin{enumerate}
\item[(a)]  If the marker is {\em inactive} and
 there is a $\Psi$-splitting $(V,V')$ of $\tau$-measure
 $2^{-e}$ above $\tau$ in which the strings are of length $\leq s$ 
 then proceed as follows. Discard some 
 strings from $V$ and $V'$, so that $V'$ 
is still of $\tau$-measure at 
least $2^{-(e+2)}$ and so that  
$\mu(V)/\mu(V\cup V')=2^{-e}$ (we can assume 
the strings are long enough 
to do this).   
Take each $\rho\in V'$ in turn and enumerate the axioms  
$\Phi^{\rho}=\Phi^{\tau} \ast 0^{n_{\rho}}1$ 
 and $\Xi(\Psi^{\rho}, \Phi^{\rho})=\rho$, 
 where $n_{\rho}$ is chosen to be large at the time of the enumeration (and so 
increases as we proceed 
through the various $\rho$). Declare the marker to be {\em active}. 
 
\item[(b)] If the marker is {\em active} with splitting $(V,V')$ but has not acted
and there is some $\rho\in V'$ and some extension $\sigma$ of $\Phi^{\rho}$ in 
$W_e[s]$
 then proceed as follows. 
Choose the least such extension $\sigma$ and, taking each $\rho' \in V$ in turn, 
define 
$\Phi^{\rho'}=\sigma\ast 0^{n_{\rho'}}1$ and 
$\Xi(\Psi^{\rho'}, \Phi^{\rho'})=\rho'$,
 where $n_{\rho'}$ is chosen to be large at the time of the enumeration. Remove
any markers that sit on extensions of the strings in $V\cup V'$ and declare that 
the marker has {\em acted}. 
\end{enumerate}

At stage $s+1\in 2\omega+2$ let 
$\ell$ be large and proceed as follows for each 
string $\tau$ 
of length $\ell$ (starting from the 
leftmost string and moving right).
Let $\rho$ be the longest initial segment of $\tau$
on which a marker sits and let 
$e$ be the index of the marker.
If the $e$-marker is  active with 
splitting $(V,V')$ but has not acted 
and $\tau$ has a prefix in $V'$ then
place an $(e+1)$-marker on $\tau$.
If the $e$-marker is  active with 
splitting $(V,V')$ but has not acted 
and $\tau$ does not have a prefix in $V\cup V'$ then
place an $e$-marker on $\tau$.
If the $e$-marker has acted place an 
$e$-marker on $\tau$, unless $\tau$ has a 
prefix in $V$ in which case 
place an $(e+1)$-marker on $\tau$.
If a marker was placed on $\tau$, 
define $\Phi^{\tau}$ to be 
$\cup_{\rho\subset\tau} \Phi^{\rho}$ 
concatenated with 
$ 0^{n_{\tau}}1$, where 
$n_{\tau}$ is chosen to be large at 
the time of the enumeration.

\ \paragraph{{\bf Verification.}} It is clear 
that the axioms enumerated for $\Phi$ 
and $\Xi$ are consistent. The 
only point at which this condition could 
possibly be violated is when a marker on 
$\tau$ with splitting $(V,V')
$ acts and defines 
$\Phi^{\rho'}=\sigma\ast 0^{n_{\rho'}}1$ and 
$\Xi(\Psi^{\rho'}, \Phi^{\rho'})=\rho'$ for each 
$\rho'\in V$. Here $\sigma$ extends 
$\Phi^{\rho}$ for some $
\rho\in V'$ which is incompatible with each 
$\rho'\in V$. These axioms remain 
consistent with those 
previously enumerated, however,  precisely 
because $(V,V')$ is a $\Psi$-splitting. 

It is also clear that for each real $X$, one 
of the outcomes (1), (2) or (3) as 
described in Section 
\ref{subse:asrdsdf} must occur. Once an 
$e$-marker placed on $\tau$ becomes 
active, it ensures that at 
least a certain proportion of the reals 
extending $\tau$ do not have infinitely 
many $e$-markers placed on 
their initial segments, and so, as previously 
observed, it follows by the Lebesgue 
density theorem that the 
set of reals for which outcome (2) occurs 
is a $\Sigma^0_3$ set of measure 0. 
We may compute the index of 
a set of strings $S$ which is c.e.\ in 
$\emptyset'$, which is of measure 
$<2^{-k-1}$ and such that all reals for 
which outcome (2) occurs have a prefix in $S$. 

Now suppose that outcome (1) occurs for $X$. 
For any $e>k+3$ let $\tau$ be the 
longest initial segment of 
$X$ on which a permanent $e$-marker is placed. 
Let $(V,V')$ be the splitting for 
the marker placed on $\tau$. 
Suppose the marker on $\tau$ does not act and 
$X$ extends a string in $V'$. 
In this case $R_e$ is 
satisfied and the lengths of $\Phi^X$ and 
$\Xi(\Psi^X,\Phi^X)$ are properly 
increased by the marker on $\tau$. 
Otherwise the marker acts and 
$X$ extends a string in $V$, 
but this allows us 
to draw the same 
conclusion. 

It remains to show that we can find the index of 
a set of strings $V$ which is c.e.\ 
in $\emptyset'$, such that $\mu(V)\leq 2^{-k-1}$, 
and such that any $X$ 
for which outcome (3) occurs either 
has $\Psi^X$ partial, or else 
has an initial segment in $V$. 
We can then put $W=V\cup S$. 
So consider  the set of strings $\tau$ that 
hold a permanent  marker which remains inactive. 
This is a prefix-free set. For 
each $\tau$ in the set, if $e$ 
is the index of the marker that sits on $\tau$ 
then we can (uniformly) find the 
index of a set of strings of 
$\tau$-measure $\leq 2^{-e+1}$ which contains an 
initial segment of any  extension of 
$\tau$ on which $\Psi$ is 
total. Since we only consider $e>k+3$, taking 
the union over all such $\tau$ 
gives a set of measure $\leq 2^{-(k+2)}$. 

Next, fix $e>k+3$ and consider all 
those $\tau$ on which a permanent 
$e$-marker is placed, which is 
eventually active but does not act. If $(V_0,V_0')$ 
is the splitting corresponding 
to one such $\tau$ and 
$(V_1,V_1')$ is the splitting corresponding to a 
different one, then any string in 
$V_0\cup V_0'$ is 
incompatible with any string in $V_1\cup V_1'$. 
Since the measure of $V$ is 
always $2^{-e}$ of the total 
measure of $V\cup V'$, the measure of the 
union of all  corresponding sets $V$ 
is at most $2^{-e}$. Taking 
the union over all $e>k+3$, we obtain a 
set of measure $< 2^{-(k+3)}$ as 
required.  
\end{proof}

To what extent is Theorem \ref{jointheo} optimal? 
It is not too difficult to show that 
there exist Demuth 
randoms that do not satisfy the join property. 
This follows from the result of 
\cite{AL11} that all low fixed point 
free degrees fail to satisfy the join property, 
and the fact \cite[Theorem 3.6.25]
{Ottobook} that there exist low 
Demuth random reals. The following question remains open: 

\begin{question}
Does there exist a weakly 2-random 
degree which does not satisfy the join 
property? 
\end{question}

Next we use a very slightly modified 
version of the machinery developed in 
Section \ref{subse:allnzdbba} in 
order to prove another instance of 
our heuristic principle. The original machinery 
could certainly have been 
specified in such a way that no 
modification would be required for this 
application, but this would have made 
the proof of Theorem \ref{th:2ranb1gen} 
seem more complicated.  

\begin{thm}\label{th:2randowndenjoinp}
Every degree that is bounded 
by a 2-random degree satisfies
the join property.
\end{thm}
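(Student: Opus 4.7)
The plan is to prove Theorem \ref{th:2randowndenjoinp} by combining the atomic strategy of Theorem \ref{jointheo} with the downward-density framework of Section \ref{subse:allnzdbba}, in the same spirit in which Theorem \ref{th:2ranb1gen} extended Theorem \ref{th:kur2ranb1gen}. Assume $B$ is 2-random, $A := \Theta^B$ is of incomputable degree, and $D := \Psi^A$ represents a non-zero predecessor of the degree of $A$. Using Lemma \ref{coro:replfungenat} we may assume that $\Theta$ and $\Psi$ are special. It suffices to build, uniformly in $k \in \omega$, a $\emptyset'$-c.e.\ set $W$ of measure $< 2^{-k}$ and functionals $\Phi,\Xi$ such that for every $X$ with no prefix in $W$, writing $A' = \Theta^X$ and assuming $\Psi^{A'}$ is total, we have $\Phi^{A'}$ total, $\Xi(\Psi^{A'}, \Phi^{A'}) = A'$, and moreover $A'\not\leq_T\Phi^{A'}$. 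For $k$ sufficiently large, $B$ avoids $W$, and $C := \Phi^A$ is then a joining partner of $D$ strictly below $A$.

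The construction places $e$-markers on strings $\sigma$ in the image space of $\Theta$, and each $e$-marker runs the atomic strategy of Theorem \ref{jointheo} with one essential change: searches for splittings are filtered by $\Theta$-measure rather than plain measure. To find a $\Psi$-splitting $(V,V')$ above $\sigma$ with $\pi(\Theta, V)$ and $\pi(\Theta, V')$ each at least $\epsilon/2$, the marker invokes Lemma \ref{le:mediwasplitbfa}; if no such splitting exists, the lemma returns a small $\emptyset'$-c.e.\ cover of the bad set of 2-random oracles, which is absorbed into $W$. Once such a splitting is found, on each $\rho \in V'$ the marker enumerates the coding axioms $\Phi^\rho = \Phi^\sigma \ast 0^{n_\rho} 1$ together with $\Xi(\Psi^\rho, \Phi^\rho)=\rho$, restrains the placement of markers on extensions of strings in $V$, and completes the coding on $V$ once the triggering $\Sigma^0_1$ event fires, exactly as in case (b) of Theorem \ref{jointheo}. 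All of this sits inside the submarker/$F_\sigma(\sigma')$ bookkeeping of Section \ref{subse:allnzdbba}, which keeps the approximation $\pi^\ast(\sigma)$ honest in the sense of \eqref{eq:aaproxofpi} and ensures that each marker is injured only finitely often.

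Verification then proceeds via the standard three-outcome analysis. Finitary failure (outcome (3)) is caught using Lemma \ref{le:cconran} applied to the prefix-free set of hosts of permanent markers that never become active, contributing a $\emptyset'$-c.e.\ piece of $W$ of measure $< 2^{-k-2}$. Infinitary failure (outcome (2)) is ruled out by Lemma \ref{le:extled}: each permanent active $e$-marker at $\sigma$ permanently resolves $R_e$ on a fixed positive fraction of $\pi(\Theta,\sigma)$, and the resulting null $\Sigma^0_3$ class supplies the remaining $\emptyset'$-c.e.\ piece of $W$. For any $X$ with successful outcome (1), totality of $\Phi^{A'}$ and the join-coding identity $\Xi(\Psi^{A'},\Phi^{A'})=A'$ follow exactly as in Theorem \ref{jointheo}.

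The main obstacle is ensuring $A \not\leq_T \Phi^A$. Theorem \ref{jointheo} obtained this for free by making $\Phi^A$ 1-generic and invoking the fact that 1-generic degrees do not bound 1-random degrees, but here $A$ need not be 1-random, so that shortcut is unavailable. The natural workaround is to replace the $W_e$-genericity requirements by direct diagonalisation requirements of the form $\Psi_e^{\Phi^{A'}} \neq \Psi^{A'}$: since the $\Xi$-coding already gives $A' \leq_T \Psi^{A'} \oplus \Phi^{A'}$, the chain $\Phi^{A'} \geq_T A' \geq_T \Psi^{A'}$ would contradict such a diagonalisation, so enforcing $D\not\leq_T C$ is enough. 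Each such $R_e$ is satisfied by a second layer of splittings, found above the strings in $V'$ via another invocation of Lemma \ref{le:mediwasplitbfa} (with $\Psi_e\circ\Phi$ replacing $\Psi$): one branch of this secondary splitting is selected to disagree with $\Psi^\rho$ at an appropriate position. The delicate combinatorial point is interleaving this secondary splitting with the primary $\Xi$-coding without disturbing consistency of the enumerated axioms; this is accomplished by pushing the diagonalisation down to the submarker level, so that any disruption caused by a successful diagonalisation is absorbed by the framework's existing injury accounting.
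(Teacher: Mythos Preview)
Your framework and your identification of the main obstacle are both correct: the coding of $Y$ into $\Psi^Y\oplus\Phi^Y$ via the primary $\Psi$-splitting, the submarker bookkeeping of Section~\ref{subse:allnzdbba}, and the three-outcome analysis all go through exactly as you describe. Where you diverge from the paper is in how you propose to secure $A\not\leq_T\Phi^A$.

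The paper's resolution is much simpler than your direct diagonalisation and is worth knowing. It begins with a case split: if $B$ (your $A$) already has 1-generic degree, then Theorem~\ref{th:1genjoin} finishes the argument outright. Otherwise one simply keeps the genericity requirements $R_e$ from Theorem~\ref{jointheo} unchanged, making $\Phi^Y$ 1-generic. The point is that $\Phi^Y\leq_T Y$ always, so if also $Y\leq_T\Phi^Y$ then $Y\equiv_T\Phi^Y$ and $Y$ would have 1-generic degree, contrary to the case hypothesis. Thus the very same atomic strategy as in Theorem~\ref{jointheo} suffices, with no new diagonalisation machinery at all; the only real work is transplanting that strategy into the $\Theta$-image framework, which you have already done correctly.

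Your alternative route---diagonalising $\Psi_e(\Phi^Y)\neq\Psi^Y$ directly---is a reasonable idea and your reduction (if $C\geq_T A$ then $C\geq_T D$) is valid, but the sketch has gaps. Invoking Lemma~\ref{le:mediwasplitbfa} with ``$\Psi_e\circ\Phi$ replacing $\Psi$'' does not make sense as stated: $\Phi$ is under construction, and $\Psi_e$ is an arbitrary functional, not special, so the hypotheses of that lemma fail. What you actually need is an ordinary $\Psi_e$-splitting search above the current $\Phi$-image $\gamma=\Phi^{\sigma'}$; if none exists then $\Psi_e$ restricted to $[\gamma]$ is partial or computable and the requirement is met since $\Psi^Y$ is incomputable. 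If a splitting $(\eta_0,\eta_1)$ at argument $n$ is found, you must choose, for each reserved $\rho$, the side disagreeing with $\Psi^\rho(n)$---but different $\rho$'s in $F_\sigma(\sigma')$ may have different (or undefined) $\Psi^\rho(n)$, so a single uniform choice is impossible. This can be repaired by first lengthening the reserved strings until $\Psi^\rho(n)\!\downarrow$ and then partitioning $F_\sigma(\sigma')$ according to that value, but you have not said any of this, and the interaction with the $\Xi$-coding then needs to be rechecked. So your approach can likely be made to work, but it is genuinely more intricate than the paper's case split, which sidesteps the whole issue.
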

\begin{proof}
Suppose that $A$ is a 2-random 
set that computes an incomputable set $B$ via  $\Theta$. 
We need to show that $B$ has the join property.
If $B$ is of  1-generic degree then the theorem holds
by Theorem \ref{th:1genjoin},
so suppose otherwise.
By Lemma \ref{coro:replfunat} we may assume that $\Theta$
is {\em special}.
Suppose that $B$ computes an incomputable set $C$
via a Turing functional $\Psi$.
By Lemma \ref{coro:replfungenat} we may assume that
$\Psi$ is {\em special}.
In order to show that there is some $D<_T B$ such that 
$D\oplus C\equiv_T B$, it suffices
to define
a computable procedure
which takes 
a number $k$ and returns (indices of) a $\emptyset'$-c.e.\ 
set of strings
$W$ with $\mu(W)<2^{-k}$, and a Turing 
functional $\Phi$ such that the following holds for all sets $X$ which do not have 
a prefix in $W$:
\begin{equation}\label{eq:abasicreqfjoib}
\textrm{$\Theta^X=Y$ and $\Psi^Y$ is total}\Rightarrow  
\textrm{$\Phi^Y$ is total $\mathbf{\wedge}$ $Y\leq_T \Psi^Y \oplus \Phi^Y$ 
$\mathbf{\wedge}$ 
$ Y\not \leq_T \Phi^Y$}.
\end{equation}
 \noindent In order to see that this suffices, consider the sequence of procedures
 with input $k\in\omega$. Since $A$ is 2-random,
 for some $k\in\omega$ the corresponding procedure will produce $\Phi$
 such that the right hand side of the implication in 
 (\ref{eq:abasicreqfjoib}) holds with
 $Y=B$. In other words, 
 $D\oplus C\equiv_T B$ and $D<_T B$ where $D=\Phi^B$.
 Actually,  since we assumed that $\Theta^A$ is not of 1-generic degree, 
it suffices to replace $Y\not \leq_T \Phi^Y$ in
(\ref{eq:abasicreqfjoib}) with the requirement that 
$\Phi^Y$ is 1-generic.
 
 It remains to define and verify this procedure with input
 $\Theta,\Psi$ and $k\in\omega$.  The procedure will also produce
 a Turing functional
 $\Xi$ for the reduction 
 $Y\leq_T \Psi^Y \oplus \Phi^Y$ in (\ref{eq:abasicreqfjoib}).
We look to satisfy the following requirements for all $X$ which do not have a 
prefix in $W$:
\[
R_e:\ \textrm{$\Theta^X=Y$ and $\Psi^Y$ is total}\Rightarrow\ \   
\left\{\parbox{6cm}{$\Phi^Y$ is total\ \   and\ \   
$\Xi(\Psi^Y, \Phi^Y)=Y$ \ \  and \ \  \\ 
  $\exists n\ \big[\Phi^Y\restr_n\in W_e\ 
 \vee\ \forall \eta\in W_e,\ \Phi^Y\restr_n \not\subseteq \eta\big]$}\right.
\] 
\noindent  where $\{ W_e\}$ is  an effective enumeration of all
upward closed c.e.\ sets of strings. 
Note that for ease of notation we let the oracles in
$\Xi$ appear as arguments and not as superscripts. We define a construction 
which deviates only slightly 
from the framework described in Section \ref{subse:allnzdbba}. Just as described 
there, markers are initially 
inactive, but now submarkers are also initially inactive and must wait to be made 
active. In defining the 
construction we make use of the following inequalities:

\begin{eqnarray}
&\pi(T_{\sigma})[s]\geq 2^{-k-2}\cdot\pi^{\ast}(\sigma)[s].\label{activemcon}\\
&\pi(\rho)[s]<2^{-q_{\sigma'}}.\label{smalldiv}\\
&0\leq \pi(F_{\sigma}(\sigma'))[s]-2^{-e}\cdot\pi(P_{\sigma}(\sigma'))[s]<2^{-
q_{\sigma'}}. \label{niceslice}
\end{eqnarray}

\ \paragraph{{\bf Construction of $\Phi,\Xi$}}
At Stage 0 place a k+4-marker on the
empty string.

At stage $s+1\in2\omega^{[e]}$, if  $e>k+3$  then
consider each string $\sigma$ on which an 
$e$-marker sits in turn (ordered first 
by length and then from left 
to right), and proceed according to the first case below that applies.

\begin{enumerate}
\item If (\ref{eq:aaproxofpi}) does not hold, let
$\pi^{\ast}(\sigma)=\pi(\sigma)[s]$, declare that
the $e$-marker on $\sigma$ is {\em injured} and
is inactive.  Remove any markers and submarkers that sit on
proper extensions of $\sigma$.  Let $m_{\sigma}$
be large and place a submarker on each extension of $\sigma$ of length 
$m_{\sigma}$.

\item Otherwise, if the marker is inactive and (\ref{activemcon}) holds, where 
$T_{\sigma}$ is the set of all 
strings extending $\sigma$ of length $m_{\sigma}$, then declare the marker to 
be active and define 
$s_{\sigma}=s$.

\item If the marker is already active, then 
proceed as follows for each submarker 
placed on a string $
\sigma'$ by $\sigma$, according to the 
first case below which applies.
\begin{enumerate}
\item[(a)] If the submarker is inactive and 
there exists a $\Psi$-splitting $(U,V)$ 
above $\sigma'$ such that $
\pi(U)[s]\geq \pi(V)[s]\geq 2^{-e}\pi(\sigma')[s_{\sigma}]$ and such that 
(\ref{smalldiv}) holds  for all $\rho\in U
\cup V$, then declare the submarker to be active. 
In this case let  $F_{\sigma}(\sigma')$ be a subset of $U$ such that 
(\ref{niceslice}) holds,  defining  
$P_{\sigma}(\sigma')=F_{\sigma}(\sigma')\cup V$. 
Take each $\rho\in V$ in turn 
and enumerate the axioms  
$\Phi^{\rho}=\Phi^{\sigma'} \ast 0^{n_{\rho}}1$ 
 and $\Xi(\Psi^{\rho}, \Phi^{\rho})=\rho$, 
 where $n_{\rho}$ is chosen to be large at 
 the time of the enumeration (and so 
increases as we proceed 
through the various $\rho$).

\item[(b)]  If the submarker is {\em active}  but has not acted
and there is some $\rho\in P_{\sigma}(\sigma')-F_{\sigma}(\sigma')$ and some 
extension $\eta$ of $
\Phi^{\rho}$ in $W_e[s]$
 then proceed as follows. 
Choose the least such extension $\eta$ and, taking each 
$\rho' \in F_{\sigma}(\sigma')$ in turn, define 
$\Phi^{\rho'}=\eta\ast 0^{n_{\rho'}}1$ and 
$\Xi(\Psi^{\rho'}, \Phi^{\rho'})=\rho'$,
 where $n_{\rho'}$ is chosen to be large 
 at the time of the enumeration. Remove
any markers that sit on extensions of 
the strings in $P_{\sigma}(\sigma')$ and 
declare that the submarker 
has {\em acted}.

\item[(c)]  If the previous cases do not apply 
and the second inequality of 
(\ref{niceslice}) no longer holds 
then there are two possibilities to consider.  
If (\ref{smalldiv}) still holds for all 
$\rho\in F_{\sigma}(\sigma')$,  
then remove strings from $F_{\sigma}(\sigma')$ 
so that (\ref{niceslice}) holds. If 
not then choose $\ell$ to be 
large, and replace each string   
$\rho\in F_{\sigma}(\sigma')$ with all extensions 
of $\rho$ of length $\ell$, to 
form a new $F_{\sigma}(\sigma')$ (whenever we 
redefine $F_{\sigma}(\sigma')$ 
we also consider 
$P_{\sigma}(\sigma')$ to be 
redefined accordingly, 
$P_{\sigma}(\sigma')=F_{\sigma}(\sigma')\cup V$). 

\end{enumerate} 

\end{enumerate}

At stage $s+1\in 2\omega+1$ 
let $\ell$ be large  and
do the following for each string  $\rho$ of length $\ell$. 
Let
  $\sigma$ be the longest initial segment of $\rho$
on which a marker sits. 
Let $\sigma'$ be the string of length 
$m_{\sigma}$ which is an initial segment of  
$\rho$,
and let $e$ be the index of the marker 
placed on $\sigma$. If the submarker 
placed on $\sigma'$ is not 
active, then we do not place any 
marker on $\rho$, so suppose otherwise. 
If the submarker on $\sigma'$ has not acted
and $\rho$ has a prefix in 
$P_{\sigma}(\sigma')-F_{\sigma}(\sigma')$ then
place an $(e+1)$-marker on $\rho$.
If the submarker on $\sigma'$  has not acted 
and $\rho$ does not have a prefix in $P_{\sigma}(\sigma')$ then
place an $e$-marker on $\rho$.
If the submarker has acted place an 
$e$-marker on $\rho$, unless $\rho$ has a 
prefix in $F_{\sigma}
(\sigma')$, in which case place an $(e+1)$-marker on $\rho$.
If a marker was placed on $\rho$, 
define $\Phi^{\rho}$ to be 
$\cup_{\rho'\subset\rho} \Phi^{\rho'}$ concatenated with 
$ 0^{n_{\rho}}1$, where 
$n_{\rho}$ is chosen to be large at the time of the enumeration.

\ \paragraph{{\bf Verification}} The question 
of consistency for $\Phi$ and $\Xi$ is 
only trivially different than 
the case for Theorem \ref{jointheo}. 
We are therefore left to specify $W$ such 
that $\mu(W)<2^{-k}$ and $W$ 
has an initial segment of every  $X$ such that 
$\Theta^X=Y$, $\Psi^Y$ is total 
and either outcome (2) or (3) 
holds for $Y$.
  First of all consider those $\Theta^X=Y$ for 
  which outcome (3) applies. There 
are three possibilities. First, it 
may be the case that a permanent marker is 
placed on $\sigma \subset Y$, 
which never becomes active.  By 
Lemma \ref{le:cconran} we can find the index 
for a $\emptyset'$-c.e.\ set of 
strings $V_0$ such that $
\mu(V_0)<2^{-k-2}$ and $V_0$ contains an 
initial segment of every $X$ for 
which $\Theta^X$ is total and 
has such a marker placed on an initial segment. 
The second possibility is that the 
first case does not apply 
but a permanent submarker is placed on an 
initial segment of $\Theta^X$ which 
never becomes active. 
Since the strings on which such submarkers 
are placed form a prefix-free set and 
we only work with $e>k
+3$,  Lemma \ref{le:mediwasplitbfa}  directly 
provides us with a set $V_1$ such 
that $\mu(V_1)\leq 2^{-k-3}$ 
and which contains an initial segment of every 
$X$ such that $\Theta^X=Y$ is 
total, $\Psi^Y$ is total, and 
such that  such a submarker is placed on an 
initial segment of $Y$. The last 
possibility is that $\Theta^X$ 
extends a string in (the final value) 
$F_{\sigma}(\sigma')$ for some permanent 
submarker which does not act 
and which is placed by an $e$-marker on $\sigma$. 
Since, for fixed $e$,  the 
union of all the various 
$P_{\sigma}(\sigma')$ corresponding to such 
submarkers forms a prefix-free set, 
and since we maintain the 
second inequality of (\ref{niceslice}) it follows that, 
summing over all $e>k+3$,  
we can find the index for an $
\emptyset'$-c.e.\ set of strings $V_2$ such that 
$\mu(V_2)<2^{-k-2}$ and $V_2$ 
contains an initial segment 
of every $X$ for which $\Theta^X$ extends a 
string in one of these $F_{\sigma}(\sigma')$. 
  
  Finally, we must show that the set of $X$ such 
  that $\Theta^X$ is total and has 
outcome (2) is a $\Sigma^0_3$ set of measure 0.  
Now suppose that a permanent marker is placed 
on $\sigma$ which 
becomes active at stage $s_{\sigma}$. 
 We wish to find a prefix-free set of strings 
$V_{\sigma}$ extending $
\sigma$ such that $\pi(V_{\sigma})$ is at least a 
fixed proportion of $\pi(\sigma)$ 
and no $e$-markers are 
placed on strings extending those in $V_{\sigma}$. 
Then the result will follow by 
Lemma \ref{le:extled}.  
Subsequent to the last injury of the marker on $\sigma$ we maintain 
(\ref{eq:aaproxofpi}), and activation of 
the marker requires that (\ref{activemcon}) holds. If the marker places a 
permanent submarker on $\sigma'$ 
which does not become active, then no markers 
will be placed on extensions of $\sigma'$, so we can 
immediately enumerate all such 
$\sigma'$ into $V_{\sigma}$. Now we consider 
each of the $\sigma'$ on 
which the marker places a permanent submarker 
which becomes active, and we 
look to enumerate a set of 
strings $D_{\sigma}(\sigma')$ into 
$V_{\sigma}$, such that all these strings 
extend $\sigma'$ and $\pi(D_{\sigma}(\sigma'))$ is at least a fixed 
proportion of $\pi(\sigma')[s_{\sigma}]$. We consider 
approximations to $D_{\sigma}(\sigma')$ and 
then take the final value. At each 
stage define $D_{\sigma}
(\sigma')$ by replacing each string in 
$F_{\sigma}(\sigma')$ with the shortest 
initial segment which is 
incompatible with all strings that are not in
$F_{\sigma}(\sigma')$ (so this set 
changes as $F_{\sigma}(\sigma')$ does).  
  Now at stage $s_0$ at which the submarker is 
  activated, we have that 
  $\pi(P_{\sigma}(\sigma')-F_{\sigma}(\sigma'))[s_0] 
  \geq 2^{-e-1}\cdot \pi(\sigma')[s_{\sigma}]$, and by (\ref{niceslice}) 
we therefore have that $\pi(D_{\sigma}(\sigma'))[s_0]\geq 
2^{-2e-1}\cdot \pi(\sigma')[s_{\sigma}]$. We 
wish to show by induction  that 
this condition is maintained at subsequent stages. First note that the strings in 
$P_{\sigma}(\sigma')-F_{\sigma}(\sigma')$ 
do not subsequently change. When we redefine 
$F_{\sigma}(\sigma')$ by extending 
the length of the strings, this does not change $D_{\sigma}$. When we remove 
strings from $F_{\sigma}(\sigma')$ 
at a stage $s$ we maintain satisfaction of the first inequality in 
(\ref{niceslice}) so that, since 
$\pi(P_{\sigma}(\sigma')-F_{\sigma}(\sigma'))[s]\geq  
\pi(P_{\sigma}(\sigma')- F_{\sigma}(\sigma'))[s_0]$,    
$\pi(D_{\sigma}(\sigma'))[s_0]\geq 2^{-2e-1}\cdot \pi(\sigma')[s_{\sigma}]$ 
still holds. 
 \end{proof}

\begin{coro}
Every non-zero degree
below a 2-random degree is the
supremum of two lesser degrees. Hence 
2-random degrees do not bound strong minimal covers.
\end{coro}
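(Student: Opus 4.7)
The plan is to chain together the two principal results already established in this section. Let $\mathbf{a}$ be a non-zero degree with $\mathbf{a}\leq \mathbf{r}$ for some 2-random $\mathbf{r}$ (the case $\mathbf{a}=\mathbf{r}$ is permitted). First I would exhibit a non-zero predecessor of $\mathbf{a}$. By Theorem \ref{th:2ranb1gen}, since $\mathbf{a}$ is a non-zero degree bounded by a 2-random, $\mathbf{a}$ bounds a 1-generic degree $\mathbf{g}$. As 1-generic degrees are never minimal, $\mathbf{g}$ itself bounds a non-zero degree strictly below it; pulling this up, we obtain a non-zero $\mathbf{b}<\mathbf{a}$.

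Next I would apply Theorem \ref{th:2randowndenjoinp} to $\mathbf{a}$: being bounded by a 2-random, $\mathbf{a}$ satisfies the join property. Applied to the non-zero predecessor $\mathbf{b}<\mathbf{a}$ just produced, this delivers a degree $\mathbf{c}<\mathbf{a}$ with $\mathbf{b}\vee\mathbf{c}=\mathbf{a}$. Thus $\mathbf{a}$ is the supremum of the two strictly lesser degrees $\mathbf{b}$ and $\mathbf{c}$, proving the first assertion.

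For the second assertion, suppose towards a contradiction that some 2-random degree $\mathbf{r}$ bounds a strong minimal cover $\mathbf{a}$ of a degree $\mathbf{d}<\mathbf{a}$. Then $\mathbf{a}\leq \mathbf{r}$ and $\mathbf{a}$ is non-zero, so by the first assertion we may write $\mathbf{a}=\mathbf{b}\vee \mathbf{c}$ with $\mathbf{b},\mathbf{c}<\mathbf{a}$. The definition of strong minimal cover forces $\mathbf{b}\leq \mathbf{d}$ and $\mathbf{c}\leq \mathbf{d}$, hence $\mathbf{a}=\mathbf{b}\vee\mathbf{c}\leq \mathbf{d}$, contradicting $\mathbf{d}<\mathbf{a}$.

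Since both inputs are already in hand, there is really no obstacle here beyond making sure the predecessor $\mathbf{b}$ supplied to the join property is genuinely non-zero and strictly below $\mathbf{a}$; the only subtlety is remembering that a 1-generic degree produced by Theorem \ref{th:2ranb1gen} need not be strictly below $\mathbf{a}$, but in either case its non-minimality supplies the required strict predecessor.
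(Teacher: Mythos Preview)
Your argument is correct and matches the paper's approach: combine the downward density of 1-generics below a 2-random (to produce a non-zero strict predecessor of $\mathbf{a}$) with the join property from Theorem~\ref{th:2randowndenjoinp}. The paper's one-line proof actually cites Theorem~\ref{th:kur2ranb1gen} rather than Theorem~\ref{th:2ranb1gen}, but your use of the latter is what is really needed, since one must obtain a 1-generic below the arbitrary $\mathbf{a}$, not merely below the 2-random $\mathbf{r}$.
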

\begin{proof}
This is a consequence of Theorem 
\ref{th:kur2ranb1gen} and Theorem \ref{th:2randowndenjoinp}.
\end{proof}

\section{Being the top of a diamond}
We say that a Turing degree $\bf{c}$ is the top 
of a diamond if there exist $\bf{a},\bf{b}<\bf{c}$ 
such that $\boldsymbol{a}\vee \boldsymbol{b}=\boldsymbol{c}$ and 
$\boldsymbol{a}\wedge \boldsymbol{b}=\boldsymbol{0}$. 
As will be discussed in the 
following sections, all sufficiently generic degrees satisfy 
the complementation property, 
which is a strictly stronger condition than being 
the top of a 
diamond so long as the degree 
concerned is not $\bf{0}$ or minimal. 
Since we do not know the measure of the 
degrees which satisfy the complementation 
property or even the meet property, however, it is interesting to 
consider the property of being the top of a diamond for the measure-theoretic 
case. 

It is well known that every 2-random degree is the top of a diamond.
This is a simple consequence of van Lambalgen's theorem that we mentioned in
Section \ref{subse:motiv} and the result in \cite{MR2352724} that was discussed 
in the proof of Lemma 
\ref{coro:replfunat}. 
We show that, in fact, the same property is
shared by all nontrivial degrees with a 2-random upper bound.

\begin{thm}Every non-zero degree that is 
bounded by a 2-random degree is the 
join of a minimal pair of 1-generic degrees. 
\end{thm}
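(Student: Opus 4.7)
The plan is to mimic the construction of Theorem \ref{th:2randowndenjoinp} but to produce two 1-generic sets at once, which together code $Y = \Theta^X$ and form a minimal pair. Let $A$ be 2-random and $\Theta^A = B$ be incomputable, with $\Theta$ special by Lemma \ref{coro:replfunat}. The goal is to define a uniformly computable procedure which, on input $k$, outputs indices for a $\emptyset'$-c.e.\ set of strings $W$ with $\mu(W) < 2^{-k}$ and Turing functionals $\Phi_0$, $\Phi_1$, $\Xi$ such that for every $X$ with no prefix in $W$, if $\Theta^X = Y$ is total then $D_i := \Phi_i^Y$ (for $i \in \{0,1\}$) are total 1-generic sets forming a minimal pair with $\Xi(D_0,D_1) = Y$. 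For a suitable $k$ the 2-random $A$ must avoid $W$, and the corresponding $D_0,D_1$ witness the conclusion with $B = \Xi(D_0,D_1)$.

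The requirements are $G^i_e$ (1-genericity of $D_i$ against the $e$-th upward-closed c.e.\ set of strings, for $i\in\{0,1\}$), $M_{e_0,e_1}$ (if $\Psi_{e_0}^{D_0} = \Psi_{e_1}^{D_1}$ is total then it is computable), plus the permanent coding of $Y$ into $(D_0,D_1)$ via $\Xi$. The construction follows the framework of Section \ref{subse:allnzdbba}: markers and submarkers are placed on initial segments of the image $Y = \Theta^X$, each dedicated to a single requirement. The genericity and coding strategies are those of Theorems \ref{jointheo} and \ref{th:2randowndenjoinp}, carried out in parallel for both $\Phi_0$ and $\Phi_1$: the $\Theta$-splittings provided by Lemma \ref{le:mediwasplitbfa} are used to branch the outputs so that each bit of $Y$ can be recovered from $(D_0,D_1)$ while reserving a known proportion of measure on which genericity submarkers may subsequently act.

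For $M_{e_0,e_1}$ the atomic strategy inside a submarker on $\sigma'$ performs a Posner-style diagonalization. We search for a $(\Psi_{e_0}\circ\Phi_0)$-splitting above $\sigma'$ of sufficient $\Theta$-measure, and similarly for $(\Psi_{e_1}\circ\Phi_1)$. If on one side, say $i=0$, no such splitting exists then Lemma \ref{le:mediwasplitbfa} covers all but a small measure of extending $X$'s by an open set on which $\Psi_{e_0}^{D_0}$ is either undefined or takes a value independent of $X$, which already verifies $M_{e_0,e_1}$ on that proportion. If both sides admit splittings, we commit $\Phi_0$ so that $\Psi_{e_0}^{D_0}(n)$ takes a particular value and then cancel those extensions of the $\Phi_1$ side on which $\Psi_{e_1}^{D_1}(n)$ would take that same value, exploiting the splitting on the $D_1$ side to ensure divergence or disagreement. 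In each case a fixed proportion of the $\Theta$-measure above $\sigma$ is permanently handled, which is exactly what the methodology of Section \ref{subse:allnzdbba} requires.

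The main obstacle is verifying that this Posner-style strategy is compatible with the parallel genericity and coding strategies, and that it acts on a proportion of measure depending only on the marker index. This is handled exactly as in Theorem \ref{th:2randowndenjoinp}, by refining the sets $F_{\sigma}(\sigma') \subset P_{\sigma}(\sigma')$ so that each submarker's local commitments stay within the measure slice already allocated by its parent marker; this ensures that minimal-pair commitments do not disturb the coding of $Y$, nor the genericity outcomes on reserved extensions. The standard density argument via Lemma \ref{le:extled} (for outcome (2)) together with the measure-counting argument via Lemma \ref{le:cconran} (for outcome (3)) then produces the $\emptyset'$-c.e.\ set $W$ of measure $<2^{-k}$, completing the proof.
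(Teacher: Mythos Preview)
Your framework invocation is right, but the two specific mechanisms you describe are both off, and together they miss the main idea of the paper's proof.

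First, the coding. You propose to recover $Y$ from $(D_0,D_1)$ using ``$\Theta$-splittings provided by Lemma \ref{le:mediwasplitbfa}''. But $\Theta$ goes from the $X$-space to the $Y$-space; splittings of $\Theta$ tell you nothing about how to distinguish different $Y$'s from their $(\Phi_0,\Phi_1)$-images. In Theorem \ref{th:2randowndenjoinp} the splittings used are $\Psi$-splittings of the \emph{given} functional computing the set we are joining against --- here there is no such given functional, since both $D_0$ and $D_1$ are being built. The paper's solution is to exploit precisely the fact that there are two outputs: it simply concatenates $\rho$ to both $\Phi^{\rho}$ and $\Psi^{\rho}$ at each step, and then proves a lemma that any two distinct strings in a $P_{\sigma}$ are sent to incomparable outputs on at least one of the two sides. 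No splittings of any external functional are needed for the coding.

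Second, and more seriously, your minimal-pair strategy does not work as stated. You search for $(\Psi_{e_0}\circ\Phi_0)$- and $(\Psi_{e_1}\circ\Phi_1)$-splittings and then propose to commit $\Phi_0$ to force $\Psi_{e_0}^{D_0}(n)$ to a value $v$ and ``cancel those extensions of the $\Phi_1$ side'' on which $\Psi_{e_1}^{D_1}(n)=v$. But the splitting you found on the $\Phi_1$ side need not be at the same argument $n$, so you have no guarantee that any extension of the current $\Phi_1$-value forces $\Psi_{e_1}^{D_1}(n)\neq v$. The standard minimal-pair search is for a single disagreement $\Psi_{e_0}^{\alpha'}(m)\neq\Psi_{e_1}^{\beta'}(m)$, not independent splittings on each side. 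More importantly, even if you find such $\alpha',\beta'$, committing $\Phi_0^{\rho}\supseteq\alpha'$ and $\Phi_1^{\rho}\supseteq\beta'$ for all $\rho\in F_{\sigma}(\sigma')$ at once is exactly what threatens the coding, and you give no mechanism to reconcile this with your $\Theta$-splitting coding (which, as above, is already problematic). Saying this is ``handled exactly as in Theorem \ref{th:2randowndenjoinp}'' is not correct: that theorem has no minimal-pair requirement and the tension you face here does not arise there.

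The paper's key idea is the notion of an \emph{$e$-failure}: a string $\rho$ above which there exist two extensions $\rho_1,\rho_2$ with $\Psi_e(\Phi^{\rho_1})\!\restr_n=\Psi_e(\Psi^{\rho_1})\!\restr_n\neq\Psi_e(\Phi^{\rho_2})\!\restr_n=\Psi_e(\Psi^{\rho_2})\!\restr_n$. If a string is never an $e$-failure, the minimal-pair requirement is automatically met above it. The submarker acts only when it finds \emph{two distinct} $e$-failure strings $\rho_1,\rho_2$ in $P_{\sigma}(\sigma')-F_{\sigma}(\sigma')$; it then uses the $\Phi$-output of an extension of $\rho_1$ and the $\Psi$-output of an extension of $\rho_2$ to produce the disagreement, and for each $\rho\in F_{\sigma}(\sigma')$ it appends $\rho$ itself to both outputs afterwards. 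Because $\rho_1\neq\rho_2$ and they were already coded by concatenation into the $\Phi$- and $\Psi$-sides respectively, the decoding lemma still goes through. This simultaneous handling of diagonalization and coding via two $e$-failures is the point you are missing.
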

\begin{proof}
Assume that $C= \Theta^D$ where $D$ is 2-random and $C$ is incomputable. 
It follows from Theorem \ref{th:2ranb1gen} and (the proof of) 
Theorem \ref{th:2randowndenjoinp} that $C$ is 
the join of two 1-generic sets. Here we will show 
that $C$ is the join of two 1-generic sets which form a 
minimal pair. We will construct the 
1-generic sets via two functionals $\Phi$
 and $\Psi$. As before we may 
assume that $\Theta$ is special. Given $k\in \omega$ we define a construction 
which suffices to specify the 
index for a $\emptyset'$-c.e.\ set of strings $W$, such that $\mu(W)<2^{-k}$, 
and such that for all $X$ which 
do not have a prefix in $W$ and such that $\Theta^X=Y$ is total  the following 
requirements are satisfied:
\[\mbox{For all }e \in 3\omega +1, \;\;
R_e: \ \exists n\ \big[\Phi^Y\restr_n\in W_\frac{e-1}{3}\ \vee\ \forall \sigma\in 
W_\frac{e-1}{3},\ \Phi^Y\restr_n 
\not\subseteq \sigma\big];
\]
\[\mbox{For all }e \in 3\omega +2, \;\;
R_e: \ \exists n\ \big[\Psi^Y\restr_n\in W_\frac{e-2}{3}\ \vee\ \forall \sigma\in 
W_\frac{e-2}{3},\ \Psi^Y\restr_n 
\not\subseteq \sigma\big].
\]

We also need to make $\Phi^C$ and $\Psi^C$ a minimal pair. A standard 
approach to building a minimal 
pair of sets is to use an approximation via finite strings $\{\alpha_s\}$ and 
$\{\beta_s\}$ with $A = \lim_s \alpha_s$ and $B=\lim_s\beta_s$.  
In order to ensure that $\Psi_d^A$ and $\Psi_e^B$ 
do not both compute 
the same incomputable set, at some stage $s$, we look for 
$\alpha' \supseteq \alpha_s$, $\beta' \supseteq \beta_s$ 
and $m\in \omega$ such that
\begin{equation}
\label{eq:incompat_ext}
\Psi_d^{\alpha'}(m)\de \ne \Psi_e^{\beta'}(m)\de.
\end{equation}
If such a pair of extensions are found we set 
$\alpha_{s+1}=\alpha'$ and $\beta_{s+1}=\beta'$. 
Failure to find such extensions 
implies that if $\Psi_d^A=\Psi_e^B$ is total, 
then it is computable. By using Posner's trick it suffices to 
meet the following requirements 
for all $e \in 3\omega$ and for all $X$ which 
do not have a prefix in $W$ and 
such that $\Theta^X=Y$ is 
total:

\[
R_{ e }:\ \ \Psi_{\frac{e}{3}}(\Phi^Y)\ \mbox{is not total, or}\    
\Psi_{\frac{e}{3}}(\Phi^Y) \ne \Psi_{\frac{e}{3}}
(\Psi^Y), \   \mbox{or} \    \Psi_{\frac{e}{3}}(\Phi^Y) \   \mbox{is computable}. 
\]

Note that here, for ease of notation, 
we sometimes let oracle inputs appear as 
arguments rather than 
suffixes. We say that a string $\rho$ is an 
{\em $e$-failure at stage $s$}, if there 
exist  $\rho_1, \rho_2$ 
extending $\rho$, such that for some $n$:
\[\Psi_{\frac{e}{3}}(\Phi^{\rho_1})[s]\restr_n =
\Psi_{\frac{e}{3}}(\Psi^{\rho_1})[s]
\restr_n \ne
\Psi_{\frac{e}{3}}(\Phi^{\rho_2})[s]\restr_n=
\Psi_{\frac{e}{3}}(\Psi^{\rho_2})[s]
\restr_n.\]
Note that if $e \in 3 \omega$ and $\rho$ is 
not an $e$-failure at any stage, then 
requirement $R_e$ is 
achieved on all extensions of $\rho$. 

If, for $e \in 3 \omega$, we place an 
$e$-marker on a string $\sigma$, and a 
submarker on $\sigma'$ then 
$F_\sigma(\sigma')$ is the set of strings 
extending $\sigma'$  which we may 
think of as guessing that a pair 
of  extensions can be found as per 
\eqref{eq:incompat_ext}. However, we also 
need to ensure that $Y$ is 
computable in the join of $\Psi^Y$ and $\Phi^Y$. 
Assume that at some stage 
$s$, we have $\rho \in F_\sigma(\sigma')$ 
and  $\Psi^\rho=\alpha$ and $\Phi^\rho=\beta$.  We look for 
extensions of $\alpha$ and 
$\beta$ on which we can achieve our 
requirement but also on which we can 
encode $\rho$.  For any two 
strings  $\rho_0, \rho_1 \in P_\sigma(\sigma') - F_\sigma(\sigma')$, 
we will ensure that $\rho_0$ and $
\rho_1$ are both $\Phi$-splitting and $\Psi$-splitting 
(this is easily achieved since we control these 
functionals).   If $\rho_0$ and  $\rho_1$ are both $e$-failures, then let 
$\alpha'=\Phi^{\rho_0}$ and 
$\beta'=\Psi^{\rho_1}$. We can ensure that 
when the submarker on $\sigma'$ goes to act 
we have $\alpha \subseteq 
\alpha'$ and $\beta \subseteq \beta'$. 
Up until this point, there has been no need 
to encode anything into 
the join of  $\alpha'$ and $\beta'$. 
Hence, at this point,  we could define  
$\Phi^\rho \supseteq \alpha'$ and 
$\Psi^\rho \supseteq \beta'$ and then  set some extension of the  
join of $\alpha'$ and $ \beta'$ to 
compute $\rho$.
Now by the $e$-failure condition there is an $n$, and  $\alpha_0$ and $
\alpha_1$ extending $\alpha'$ such that  
$\Psi_{\frac{e}{3}}(\alpha_0)\restr_n \ne \Psi_{\frac{e}{3}}(\alpha_1)\restr_n$. 
Additionally there is an $m$, 
and $\beta_0$ and $\beta_1$ extending $\beta'$ such that  
$\Psi_{\frac{e}{3}}(\beta_0)\restr_m \ne \Psi_{\frac{e}{3}}(\beta_1)\restr_m$. 
Hence we can find $i,j \in \{0,1\}$ 
such that 
$\Psi_{\frac{e}{3}}(\alpha_i)\restr_{\min(n,m)} \ne 
\Psi_{\frac{e}{3}}(\beta_j)\restr_{\min(n,m)}$.
Thus we can achieve success on all strings 
$\rho \in F_\sigma(\sigma')$ by 
defining $\Phi$ and $\Psi$ on 
these strings to extend $\alpha_i \ast \rho$ and 
$\beta_j\ast \rho$ respectively.

We make use of the following inequalities:

\begin{eqnarray}
& \pi(\sigma)[s]<2\pi^{\ast}(\sigma)[s].\label{reset}\\
Ê&\pi(P_{\sigma})[s] \geq 2^{-k-2}\cdot \pi^{\ast}(\sigma)[s]\ \  \  \mbox{and} \ \ \ \ 
\forall \rho\in P_{\sigma}(\sigma')[\pi(\rho)[s]<2^{-q_{\sigma'}}].\label{active}\\
&0\leq \pi(F_{\sigma}(\sigma'))[s] 
-2^{-e}\cdot \pi(P_{\sigma}(\sigma'))[s_{\sigma}]\ <\   2^{-q_{\sigma'}}.
\label{nicslic}
\end{eqnarray}

\ \paragraph{{\bf Construction of $\Phi$ and $\Psi$}}
At Stage $0$ place a $k+4$-marker on the empty string. 
At stage $s+1 \in 2\omega^{[e]}$, if $e >k+3$, then for each $e$-marker that sits 
on a string
$\sigma$, proceed according to the first case below that applies:
\begin{enumerate}
\item If (\ref{reset}) does not hold then redefine
$\pi^{\ast}(\sigma)=\pi(\sigma)[s]$. If the  $e$-marker on $\sigma$ is currently 
{\em active}, then declare the 
marker to be {\em inactive}.  For all strings $\rho \in F_\sigma(\sigma')$, 
define
$\Phi^{\rho}$ to be $\cup_{\rho'\subset\rho} \Phi^{\rho'}$ 
concatenated with $\rho$, and define 
$\Psi^{\rho}$ to be $\cup_{\rho'\subset\rho} \Psi^{\rho'}$ 
concatenated with $\rho$.
Remove any markers and submarkers that sit on
proper extensions of $\sigma$.  Let $m_{\sigma}$
be large and place a submarker on each extension of $\sigma$ of length 
$m_{\sigma}$.

\item  Otherwise, if the marker is inactive and 
(\ref{active}) holds for some set of strings
$P_{\sigma}(\sigma')$ for each submarker $\sigma'$, where the strings in 
$P_{\sigma}(\sigma')$ are all 
those extending $\sigma'$ of a certain length, 
declare that the marker is {\em active} and define $s_{\sigma}=s$.
For each submarker $\sigma'$, define 
$F_{\sigma}(\sigma')$ to be the least initial segment of $P_\sigma(\sigma')$ 
under the lexicographical 
ordering such that (\ref{nicslic}) holds.

\item If the marker is active then for each 
submarker $\sigma'$ of $\sigma$ which 
has not acted perform the 
following tasks:
\begin{enumerate}
\item If (\ref{nicslic}) does not hold there are two possibilities. If the second 
inequality of (\ref{active})  holds 
when we only allow the quantifier to range 
over strings in $F_{\sigma}(\sigma')$,  
then remove strings from 
$F_{\sigma}(\sigma')$ so that (\ref{nicslic}) does hold.  
Otherwise let $\ell$ be 
large and replace each string 
in $F_{\sigma}(\sigma')$ with all extensions of length $\ell$. 

\item For
each extension
$\rho$ of $\sigma'$ in $P_{\sigma}(\sigma')-F_{\sigma}(\sigma')$,
define
$\Phi^{\rho}$ to be $\cup_{\rho'\subset\rho} \Phi^{\rho'}$ 
concatenated with $\rho$.
\item For each extension
$\rho$ of $\sigma'$ in $P_{\sigma}(\sigma')-F_{\sigma}(\sigma')$,
define $\Psi^{\rho}$ to be 
$\cup_{\rho'\subset\rho} \Psi^{\rho'}$ concatenated 
with $\rho$.

\item If $e\in 3\omega$ and  $\rho\in P_\sigma(\sigma')$ 
is an $e$-failure at the 
current stage, but has not 
been so at any previous stage in $2\omega^{[e]}$ 
since the marker on $\sigma$ 
was last made active, then 
remove all markers from $\rho$ and any extensions.

\item  If $e\in 3\omega$ and there exist two distinct strings 
$\rho_1, \rho_2 \in P_\sigma(\sigma') - F_\sigma(\sigma')$ 
such that $\rho_1$ and $\rho_2$ are both $e$-failures then 
there must exist strings 
$\rho'_1 \supseteq \rho_1$ and $\rho_2' \supseteq \rho_2$ such that 
$\Psi_{\frac{e}{3}}(\Phi^{\rho_1'})[s]$ and 
$\Psi_{\frac{e}{3}}(\Psi^{\rho_2'})[s]$ are 
incomparable.
For all $\rho \in F_\sigma(\sigma')$  define: 
\[\Phi^\rho[s+1] = \Phi^{\rho_1'}[s] \ast\rho \mbox{ and }\Psi^\rho[s+1] = 
\Psi^{\rho_2'}\ast \rho.\]
Declare that the submarker on $\sigma'$ has 
acted and  remove all markers and 
submarkers that sit on 
proper extensions of $\sigma'$.

\item If $e \in 3\omega +1$, and $\sigma'$ can 
act because there exists a string 
in  $W_\frac{e-1}{3}$ 
extending $\Phi^{\sigma'}$, then for all 
$\rho \in F_\sigma(\sigma')$ define
$\Phi^\rho$ as per Theorem \ref{th:2ranb1gen} 
but define $\Psi^\rho$ to be $
\cup_{\rho'\subset\rho} \Psi^{\rho'}$ 
concatenated with $ \rho$. Similarly for the 
case $e\in 3\omega +2$.

\end{enumerate}
\end{enumerate}
At stage $s+1 \in 2 \omega +1$ let $\ell$ be large. 
For each string $\rho$ of 
length $\ell$ find the longest 
initial segment $\sigma$ with a marker. Let $e$ be 
such that marker on $\sigma$ 
is an $e$-marker. If the 
marker is inactive, then do not place a marker on 
$\rho$. If the marker is active 
let $\sigma' \subseteq \rho$ 
be the unique string on which there sits a submarker of 
the $e$-marker on $\sigma
$. If the submarker has acted then if $\rho$ extends a string in 
$F_\sigma(\sigma')$ place an $(e+1)$-marker 
on $\rho$, otherwise place an 
$e$-marker.
If the marker has not acted then let $\sigma''$ be the 
unique initial segment of $\rho$ in $P_\sigma(\sigma')$. 
If $\sigma'' \in F_\sigma(\sigma')$ 
then do not place a marker on $\rho$. If 
$\sigma''\in P_\sigma(\sigma')-F_\sigma(\sigma')$ 
is an $e$-failure then place an $e$-marker on $\rho$, 
otherwise place an $(e+1)$-marker.

\ \paragraph{{\bf Verification}} 

The analysis of outcomes (2) and (3) occurs 
exactly as in the proof of Theorem 
\ref{th:2ranb1gen} with one 
small adjustment. Suppose $e\in 3\omega$ and let 
 $T$ be the set of strings on which we place 
 permanent submarkers which do not 
act, which are placed by 
permanent $e$-markers which are eventually always active. 
Let $J$ be the union of all (the final values) $F_{\sigma}(\sigma')$
such that $\sigma' \in T$ and the submarker 
on $\sigma'$ is placed by a marker 
on $\sigma$. 
For any $\sigma' \in T$, let 
$S(\sigma')=\{ \rho : \rho \in P_{\sigma}(\sigma') 
-F_{\sigma}(\sigma')$ and $\rho$ 
is not an $e$-failure$\}$. No extension of any string in $S(\sigma')$ has an 
$e$-marker placed on it. Since  
the submarker never acts, there is at most one 
string in $P_{\sigma}(\sigma')$ 
which is an $e$-failure, and 
so $S(\sigma')$  contains all the initial elements of 
$P_{\sigma}(\sigma') -F_{\sigma}(\sigma')$ with the 
possible exception of one  string that is an $e$-failure. 
The fact that we maintain 
(\ref{nicslic}) therefore 
means that 
$\pi(F_{\sigma}(\sigma'))-2^{-e}\cdot 
\pi(S_{\sigma}(\sigma')\cup F_{\sigma}(\sigma'))<2\cdot 2^{-q_{\sigma'}}$. 
The union of all $F_{\sigma}(\sigma') \cup S_{\sigma}(\sigma')$ as $\sigma'$ 
ranges over the elements of 
$T$ forms a prefix-free set. 
This suffices to show that $\pi(J)$ is sufficiently small. 

We now consider those $Y$ for which outcome (1) 
occurs. In order to show that 
all genericity and minimal 
pair requirements are satisfied with respect to 
$Y$, for each $e>k+3$ consider 
the longest initial segment of 
$Y$ on which a permanent submarker is 
placed by a permanent $e$-marker. 
Either the submarker does not 
act and $Y$ extends a string in 
$P_{\sigma}(\sigma')-F_{\sigma}(\sigma')$, which 
is not an $e$-failure if 
$e\in 3\omega$, or else the submarker acts and 
$Y$ extends a string in 
$F_{\sigma}(\sigma')$. In either case 
the requirement is satisfied. Finally we 
need to show that $Y$ is computable in 
the join of 
$\Phi^Y$ and $\Psi^Y$. Recall that if a 
marker is placed on $\sigma$, then at any 
stage $P_{\sigma}$ is the 
union of all the various $P_{\sigma}(\sigma')$ 
for submarkers $\sigma'$. First 
note that if 
 $\rho$ is in some $ P_\sigma$, then the 
 construction will enumerate at most one
 $\Phi^\rho$ axiom and at most one $\Psi^\rho$ 
 axiom. Secondly, this is the only 
way in which the 
construction enumerates $\Phi$ and $\Psi$ axioms.

\begin{lem}
At any stage, if $\rho_0$ and $\rho_1$ are 
distinct elements of $P_\sigma$ on 
which both $\Phi$ and $\Psi$ 
are defined, then either $\Phi^{\rho_0}$ is incompatible with 
$\Phi^{\rho_1}$ or $\Psi^{\rho_0}$ is 
incompatible with $\Psi^{\rho_1}$.
\end{lem}
\begin{proof}
First assume that this is the first $P_\sigma$ defined for the 
$e$-marker on $\sigma$. 
Let $\alpha = \bigcup_{\sigma'\subset\sigma}\Phi^{\sigma'}$ and 
$\beta = \bigcup_{\sigma'\subset\sigma}\Psi^{\sigma'}$.  
In this case we have that no axioms have been 
enumerated for any $\rho'$ with 
$\sigma \subseteq \rho' \subset \rho_0$ or  
$\sigma \subseteq \rho' \subset \rho_1$. 
If $e \in 3 \omega+1$, then  this implies that 
$\Psi^{\rho_0} = \beta \ast \rho_0$ 
and $\Psi^{\rho_1} = \beta \ast 
\rho_1$. The case for $e \in 3 \omega+2$ is 
similar with $\Phi$ in place of $\Psi$.
If $e \in 3 \omega$, then we  have 
$\Phi^{\rho_i} = \alpha \ast \rho_i$ unless 
$\rho_i \in F_\sigma(\sigma')$ at 
some stage when the submarker on 
$\sigma'$ acts. Hence we only need to 
consider the case when at least 
one string has this property. Assume $\rho_0$ 
has this property. We have that:
$\Phi^{\rho_0} = \alpha \ast \rho_2\ast \rho \ast \rho_0$ and
$\Psi^{\rho_0} = \beta \ast \rho_3\ast \rho' \ast \rho_0$ 
for some strings $\rho_2$ 
and $\rho_3$ which are $e
$-failures in $P_\sigma(\sigma')$, and some 
finite strings $\rho$ and $\rho'$.
Note that $\Phi^{\rho_0} \supseteq \alpha \ast \sigma'$ so if 
 $\Phi^{\rho_0}$ is comparable with $\Phi^{\rho_1}$, then this implies that  
 $\rho_1\in P_\sigma(\sigma')$ 
and  $\rho_1 =\rho_2$. In this case,   
$\Psi^{\rho_1} \supseteq \beta \ast \rho_1$. 
Now because $\rho_1$ is 
incomparable with $\rho_3$ we have that  
$\Psi^{\rho_0}$ is incomparable with 
$\Psi^{\rho_1}$.

The lemma follows from an induction on the 
number of times the marker is made 
inactive because 
(\ref{reset}) does not hold. The strings 
$\rho_0$ and $\rho_1$ must extend 
different elements in some least 
$P_\sigma$, at which point the above argument holds.
\end{proof}

Given the above lemma we can  define a 
Turing functional $\Gamma$ such that 
if $\Phi^Y$ and $\Psi^Y$ 
are total, then 
$\Gamma(\Phi^Y \oplus \Psi^Y)=Y$ as follows. If for any string 
$\rho$, the main construction 
enumerates a $\Phi$-axiom $\la \rho, \alpha\ra$ and a $\Psi$-axiom 
$\la\rho, \beta\ra$  then enumerate a $\Gamma$-axiom 
$\la \alpha \oplus \beta, \rho \ra$.
\end{proof}

\section{The meet and complementation properties}
We say that a degree $\boldsymbol{c}$ satisfies the meet property if, for all  
$\boldsymbol{b}<\boldsymbol{c}$ 
there exists a non-zero 
$\boldsymbol{a}\leq \boldsymbol{c}$ with 
$\boldsymbol{b}\wedge \boldsymbol{a}=\boldsymbol{0}$. 
We say that  a degree $\boldsymbol{c}$ satisfies the 
complementation property if, for all  
non-zero $\boldsymbol{b}<\boldsymbol{c}$ there exists a non-zero 
$\boldsymbol{a}< \boldsymbol{c}$ with 
$\boldsymbol{b}\wedge \boldsymbol{a}=\boldsymbol{0}$ and 
$\boldsymbol{b}\vee \boldsymbol{a}=\boldsymbol{c}$. 

In \cite{GMS04} it was shown that all generalized 
high degrees have the complementation property. 
It remains open, however, as to whether this result is sharp. 
In particular we do not know if all GH$_2$ degrees satisfy the complementation 
property.
It is also unknown if all GH$_2$ degrees satisfy the meet property.  
In fact, we do not even know if all non-GL$_2$ degrees satisfy the 
complementation property. 

\subsection{Category}
Kumabe \cite{apal/Kumabe93} showed that every 2-generic satisfies the 
complementation property, and so 
also satisfies the meet property. The remaining questions are as to the extent to 
which this result is sharp: 

\begin{question} Do all 1-generics satisfy the complementation property? 
\end{question}

\noindent Again, the case for the meet property is also unknown: 

\begin{question} \label{q6} Do all 1-generics satisfy the meet property? 
\end{question}

\noindent We would expect a negative answer to Question \ref{q6}.

\subsection{Measure} For the case of measure, nothing is known.

\begin{question} \label{qmm} What is the measure of the degrees which satisfy 
the complementation 
property? 
How about the meet property? 
\end{question}

\noindent We would expect the answer to 
both parts of Question \ref{qmm} to be 0.

\section{The typical lower cone}

We close by considering some questions which concern 
what happens to the theory of the lower cone in the limit. For any degree 
$\boldsymbol{a}$ let $\boldsymbol{D}[\leq \boldsymbol{a}]$ denote the set of 
degrees below $\boldsymbol{a}$ with the inherited ordering relation, 
and let $\textup{Th}[\leq \boldsymbol{a}]$ be the (first order) theory of this 
structure.  
If $\phi$ is any sentence in the first order language of 
partial orders, then the set of all $A$ such that, for 
$\boldsymbol{a}=deg(A)$, $\phi\in \textup{Th}[\leq \boldsymbol{a}]$, 
is arithmetical and is therefore either meager or comeager and 
either of measure 0 or measure 1. Thus there exist $C_{\phi}$ and 
$D_{\phi}$ such that either all $C_{\phi}$-generic sets $A$ have 
$\phi\in \textup{Th}[\leq \boldsymbol{a}]$ or else all $C_{\phi}$-generic sets 
$A$ have the negation of $\phi$  in $\textup{Th}[\leq \boldsymbol{a}]$, 
and either all $D_{\phi}$-random sets $A$ have 
$\phi\in\textup{Th}[\leq \boldsymbol{a}]$ or else all 
$D_{\phi}$-random sets $A$ have the negation of $\phi$  in 
$\textup{Th}[\leq \boldsymbol{a}]$. Taking $C$ Turing above all 
$C_{\phi}$ and $D$ Turing above all $D_{\phi}$,  
we conclude that for all sufficiently generic degrees $\boldsymbol{a}$ 
and $\boldsymbol{b}$, 
$\textup{Th}[\leq \boldsymbol{a}]=\textup{Th}[\leq \boldsymbol{b}]$, 
and  for all sufficiently random degrees $\boldsymbol{a}$ and 
$\boldsymbol{b}$, 
$\textup{Th}[\leq \boldsymbol{a}]=\textup{Th}[\leq \boldsymbol{b}]$. 
Let us call these theories 
$\textup{Th}[\leq \mbox{\textup{Gen}}]$ and  
$\textup{Th}[\leq \mbox{\textup{Ran}}]$ 
respectively. We discussed earlier, that all sufficiently random degrees 
have a strong minimal cover, while all sufficiently generic degrees satisfy 
the cupping property. These are not properties which pertain to the lower cone, 
however, so the following question remains open: 

\begin{question}
Is  $\textup{Th}[\leq \mbox{\textup{Gen}}]=\textup{Th}[\leq \mbox{\textup{Ran}}]$?
\end{question} 

\noindent One would presumably expect this question to receive a negative 
answer. 

While it is clear that arithmetical randomness and genericity suffices, 
one might also ask for proof that this is the exact level required:

\begin{question} What is the exact level of 
randomness/genericity required in order to ensure that  
$\textup{Th}[\leq \boldsymbol{a}]=\textup{Th}[\leq \mbox{\textup{Ran}}]$ or  
$\textup{Th}[\leq \boldsymbol{a}]=\textup{Th}[\leq \mbox{\textup{Gen}}]$?
\end{question} 

Finally, we give some remarks on the complexity of
$\textup{Th}[\leq \boldsymbol{a}]$ for a sufficiently generic or random 
$\boldsymbol{a}$.
Greenberg and Montalb{\'a}n \cite{GreenbergM03} showed that
if the 1-generic degrees are downward dense
in an ideal $\mathcal{J}$ (that is, every nonzero $\boldsymbol{a}\in\mathcal{J}$ 
bounds
a 1-generic) then the first order true arithmetic is many-one reducible to 
the theory of $(\mathcal{J},\leq)$. Theorem \ref{th:2ranb1gen}
says that the 1-generic degrees are downward dense in the degrees below a 
2-random
degree. Therefore if $\boldsymbol{a}$ is 2-random then 
 $\textup{Th}[\leq \boldsymbol{a}]$ interpretes true arithmetic.
 The case for 2-generics is also true and was explicitly stated in 
 \cite{GreenbergM03}.
 
%\bibliographystyle{alpha}
%\bibliography{typd}

\end{document}